\newtheorem{thm}{Theorem}[section]
\newtheorem{lemma}[thm]{Lemma}
\newtheorem{cor}[thm]{Corollary}
\newtheorem{prop}[thm]{Proposition}
\newtheorem{example}[thm]{Example}
\newtheoremstyle{rem}{10pt}{10pt}{\rmfamily}{}{\bfseries}{.}{.5em}{}
\theoremstyle{rem}
\newtheorem{rem}[thm]{Remark}
\newtheorem{definition}[thm]{Definition}
\title{\bf Complex valued semi-linear heat equations in super-critical  spaces $E^s_\sigma$}
\author{Jie Chen, \  Baoxiang Wang\footnote{Corresponding author.}, \ Zimeng Wang}
\date{}
\begin{document}

\maketitle
\begin{abstract}
		We consider the Cauchy problem for the complex valued semi-linear heat equation
$$
 \partial_t u - \Delta u -  u^m =0, \  \ u (0,x) = u_0(x),
$$
where $m\geq 2$ is an integer and the initial data  belong to super-critical  spaces $E^s_\sigma$  for which the norms are defined by
$$
\|f\|_{E^s_\sigma} = \|\langle \xi\rangle^\sigma 2^{s|\xi|}\widehat{f}(\xi)\|_{L^2}, \ \  \sigma \in \mathbb{R},  \ s<0.
$$
If $s<0$, then any Sobolev space  $H^{r}$  is a subspace of $E^s_\sigma$, i.e., $\cup_{r \in \mathbb{R}} H^r \subset E^s_\sigma$. We obtain the global existence and uniqueness of the solutions if the initial data belong  to $E^s_\sigma$ ($s<0, \ \sigma \geq d/2-2/(m-1)$) and their Fourier transforms are supported in the first octant,  the smallness conditions on the initial data in $E^s_\sigma$ are not required for the global solutions. Moreover, we show that the error between the solution $u$ and the iteration solution  $u^{(j)}$ is $C^j/(j\,!)^2$. Similar results also hold if the nonlinearity $u^m$ is replaced by an exponential function $e^u-1$.\\

\noindent {2020 MSC}: 35K58.
		
	\end{abstract}

\section{Introduction}
Let  $ \mathscr{S}$ be the Schwartz space  and $ \mathscr{S}'$ be its dual space. We write
$$
p_\lambda(f) = \sup_{x\in \mathbb{R}^d}e^{\lambda |x|}|f(x)|, \quad q_{\lambda}(f)= \sup_{\xi\in \mathbb{R}^d} e^{\lambda|\xi|}|\widehat{f}(\xi)|,
$$
\begin{equation*}
		\mathscr{S}_1:=\{f\in \mathscr{S}: p_\lambda(f)+q_\lambda(f)<\infty, ~\forall~\lambda>0 \}.
\end{equation*}	
	$ \mathscr{S}_1$ equipped with the system of semi-norms $\{p_\lambda+q_\lambda\}_{\lambda>0}$ is a complete locally convex linear topological space, which is said to be the Gelfand-Shilov space, cf. \cite{GeSh1968}. We denote by $ \mathscr{S}_1'$ the dual space of $ \mathscr{S}_1$.  One easily sees that
$$ \mathscr{S}_1 \subset  \mathscr{S}, \ \  \mathscr{S}' \subset  \mathscr{S}'_1.$$
$\mathscr{S}_1$ contains the translations and modulations of Gaussian $e^{-\mathrm{i}m x} e^{- |x-n|^2/2}$ and their linear combinations, which are dense in any Sobolev spaces $H^\sigma$,  cf. \cite{GeSh1968,Gr2001}.
The Fourier transforms on $ \mathscr{S}_1'$ can be defined by duality (cf. \cite{FeGrLiWa2021}), namely, for any $f\in  \mathscr{S}'_1$, its Fourier transform $\mathscr{F} f = \widehat{f}$ satisfies
$$
\langle \mathscr{F} f, \,  \varphi \rangle = \langle f, \,  \mathscr{F} \varphi \rangle, \ \ \forall \ \varphi  \in \mathscr{S}_1.
$$

\begin{definition}
Let $s, \sigma \in \mathbb{R}$.   Denote
\begin{equation*}
E^s_\sigma = E^s_\sigma (\mathbb{R}^d) :=\{f\in  \mathscr{S}_1': 	 \langle \xi\rangle^\sigma 2^{s|\xi|}\widehat{f}(\xi)\in L^2(\mathbb{R}^d)\}
\end{equation*}
for which the norm is given by
\begin{align} \label{spaceEs}
\|f\|_{E^{s}_\sigma } = 	\left\|\langle \xi\rangle^\sigma 2^{s|\xi|}\widehat{f}(\xi) \right\|_{L^2 (\mathbb{R}^d)}.
\end{align}
\end{definition}
$E^s_\sigma$ is a Banach space. Let us denote $E^s:= E^s_0$. In the case $s>0$, $E^s$ as an infinitely smooth function space, was introduced in Bj\"orck \cite{Bj1966}.  $E^s_\sigma $ can be regarded as a generalization of Sobolev spaces $H^\sigma:=(I-\Delta)^{-\sigma/2} L^2(\mathbb{R}^d)$ for which the norm is defined by
\begin{align} \label{spaceHs}
  \|f\|_{H^\sigma} := \|\langle \xi\rangle^\sigma \widehat{f} \|_{L^2 (\mathbb{R}^d)}.
\end{align}
At first glance, we find that the norm on $E^s_\sigma$ can be obtained  by appending an exponential regularity weight $2^{s|\xi|}$ in the Sobolev norm \eqref{spaceHs} and $H^\sigma= E^0_\sigma$. However, $E^s_\sigma$ is a rather rough function space. In fact, one has that

\begin{prop} \label{embedding}
Let $s<0$ and $\sigma, r \in \mathbb{R}$. Then we have $H^{r} \subset E^s_\sigma$.
\end{prop}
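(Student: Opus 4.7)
The plan is to prove this by a direct pointwise weight comparison in frequency, since $s<0$ makes the exponential factor $2^{s|\xi|}$ dominate any polynomial. First, I would record the set-theoretic inclusion: the continuous embeddings $H^r \subset \mathscr{S}' \subset \mathscr{S}_1'$ ensure that every $f \in H^r$ is a well-defined element of $\mathscr{S}_1'$, so its Fourier transform $\widehat{f}$ is meaningful in $\mathscr{S}_1'$; moreover, by definition of $H^r$, $\widehat{f}$ is represented by a locally integrable function with $\langle \xi \rangle^r \widehat{f} \in L^2$. So the only question is whether this function also satisfies the $E^s_\sigma$-integrability condition.

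Second, the analytic core is the elementary pointwise estimate
\[
\langle \xi \rangle^{\sigma - r} \, 2^{s|\xi|} \leq C(s,\sigma,r,d), \qquad \xi \in \mathbb{R}^d,
\]
which holds because $2^{s|\xi|} = e^{(s \log 2)|\xi|}$ decays exponentially in $|\xi|$ (as $s<0$), while $\langle \xi \rangle^{\sigma - r}$ grows at most polynomially; the product is continuous on $\mathbb{R}^d$ and tends to $0$ at infinity, hence bounded by some constant $C$.

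Third, I factor the $E^s_\sigma$-weight against the $H^r$-weight and apply this bound:
\[
\bigl\| \langle \xi \rangle^\sigma \, 2^{s|\xi|} \widehat{f}(\xi) \bigr\|_{L^2}
= \bigl\| \bigl[\langle \xi \rangle^{\sigma - r} 2^{s|\xi|}\bigr] \, \langle \xi \rangle^r \widehat{f}(\xi) \bigr\|_{L^2}
\leq C \, \bigl\| \langle \xi \rangle^r \widehat{f} \bigr\|_{L^2},
\]
which is exactly $\|f\|_{E^s_\sigma} \leq C \|f\|_{H^r}$, giving both the inclusion $H^r \subset E^s_\sigma$ and its continuity.

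There is no serious obstacle in this argument; the entire content of the proposition is the observation that any exponential decay with negative rate beats any polynomial weight, and the rest is a one-line factorization in frequency. The only care required is ensuring that $\widehat{f}$ is unambiguously interpreted via the Gelfand--Shilov duality described just before the definition of $E^s_\sigma$, which is automatic because $H^r \subset \mathscr{S}'_1$.
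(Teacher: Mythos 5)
Your proof is correct and follows the same route as the paper: the pointwise bound $\langle\xi\rangle^{\sigma-r}2^{s|\xi|}\leq C$ for $s<0$ is exactly the inequality $2^{s|\xi|}\lesssim\langle\xi\rangle^{r-\sigma}$ used in the paper's one-line argument. Your additional remarks on interpreting $\widehat{f}$ via $H^r\subset\mathscr{S}'_1$ are harmless elaboration, not a different method.
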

\begin{proof}
 Since $2^{s|\xi|} \lesssim \langle \xi\rangle^{r-\sigma}$ for $s<0$ and $\sigma, r\in \mathbb{R}$, one easily sees that  $H^{r} \subset E^s_\sigma$.
 \end{proof}

By Proposition \ref{embedding}, we see that $\cup_{r\in \mathbb{R}} H^r$ is a subset of $E^s_\sigma$ if $s<0$. It is easy to see that  $E^s \subset E^s_\sigma$ for $\sigma \leq 0$, and $E^s_\sigma \subset E^s$ for $\sigma \geq 0$.

 The aim of this paper is to study the Cauchy problem for a class of semi-linear heat equations with initial data in $E^s_\sigma$  with $s \leq 0$.
Generally speaking, if a nonlinear evolution equation has a critical Sobolev space $H^{\sigma_c}$, which means that it is well-posed in $H^{\sigma}$ for $\sigma>\sigma_c$  and ill posed in $H^{\sigma}$ for $ \sigma<\sigma_c$, then  $H^\sigma$ with $\sigma>\sigma_c$ ($\sigma<\sigma_c$) is said to be the subcritical (supercritical) Sobolev space. Any Banach function space $X$ satisfying $H^\sigma\subset X$ for some $\sigma<\sigma_c$   is said to be a {\it supercritical space}.
For a nonlinear evolution equation, if it has a critical Sobolev space $H^{s_c}$, then Proposition \ref{embedding} implies that any supercritical Sobolev space $H^r$ ($r<\sigma_c$) is a subspace of $E^s_\sigma$ if $\sigma<0$. So, $E^s_\sigma$ with $s<0$ is also a supercritical space.

Recently, Navier-Stokes equation in supercritical modulation spaces $E^s_{p,1}$ was studied in \cite{FeGrLiWa2021}, where a general idea to handle a class of supercritical initial data was presented. In this paper, we will consider the semi-linear heat equation (SLH)
\begin{equation}\label{model}
		 \partial_t u - \Delta u -   f(u) =0, \  \ u (0,x) = u_0(x)
\end{equation}
in the supercritical spaces $E^s_\sigma$ with $s<0$,  where $f(u)=u^{m}$ (or $f(u)=e^u-1$), $u $ is a complex-valued distribution function of $ (t,x)\in \mathbb{R}_+\times \mathbb{R}^d$, $m\geq 2$ is a natural number.

For the real valued SLH, Fujita \cite{Fu1966} showed  that there does not exist a global solution for any
nontrivial nonnegative and suitable smooth initial data in the case $  m<1+2/d$. Hayakawa \cite{Ha1973}, Kobayashi, Sino and Tanaka \cite{KoSiTa1977}, Weissler \cite{We1981} considered the case $m=1+2/d$ and showed that \eqref{model} has no nontrivial global nonnegative solutions. In the case $m>1+2/d$ and the initial data are suitable small in $L^p$ with $ p = d(m-1)/2 >1$, then \eqref{model} has a unique global solution in $L^p$, cf. Weissler \cite{We1981} and Giga \cite{Gi1986}, and their arguments are also adapted to the complex-valued cases. So, $m=1+2/d$ is a critical power for the global solutions of \eqref{model} with non-negative initial data. The blowup behavior of the solutions of \eqref{model} were studied in \cite{GiUm2006,LeNi1992,MiSo2019,Se2018,Se2020} (see also \cite{QuSo2007}). It seems that the sign-change solutions are more complicated. Mizoguchi and  Yanagida \cite{MiYa1997} found that  $m=1+2/(k+1)$  is the critical index in 1D if $u_0\in H^1_\rho(\mathbb{R})$ changes signs $k$ times, where $ H^1_\rho $ denote the weighted $H^1$ spaces with the weight $\rho= e^{x^2/4}$. In the cases $1<m<1+2/d$, Cazenave, Dickstein and Weissler \cite{CaDiWe2009} constructed some initial value $\psi \in C_0 (\mathbb{R}^d) \cap L^1(\mathbb{R}^d)$  such that \eqref{model} has a unique global solution;  but for initial data $u_0 = c \psi$, the unique solution of \eqref{model} blows up in finite time if $c>0$ is either sufficiently small or sufficiently large.

For the complex-valued SLH, it has a strong relation with the viscous Constantin--Lax--Majda equation \cite{CoLaMa1985,Sch1986},
\begin{align}
v_t =v_{xx} + v Hv, \label{CLMeq}
\end{align}
which is a one dimensional model for the vorticity equation.
Here $v$ is a real valued function and $H$ is the Hilbert transform defined by
$$
[Hf](x) = \frac{1}{\pi}\, p.v.\, \int_{\mathbb{R}}\frac{f(y)}{x-y} dy.
$$
It is easy to see that  $Hv$ satisfies
$$
(Hv)_t = (Hv)_{xx} +\frac{1}{2}
((Hv)^2  - v^2).
$$
Denote $u=\frac{1}{2 {\rm i}} (v+iHv)$, we see that $u$
satisfies
\begin{align} \label{complexNLH2}
u_t  = \Delta u  +u^2.
\end{align}
Noticing that $\widehat{Hv}(\xi) = -{\rm i} \, {\rm sgn}(\xi) \widehat{v}(\xi)$, we have\footnote{$\chi_E$ denote the characteristic function on $E$.}
\begin{align} \label{halflinesupport}
\widehat{u} = -{\rm i} \chi_{[0,\infty)} \widehat{v},
\end{align}
which means that $\widehat{u}$ is supported in the half line $\mathbb{R}_+=[0,\infty)$. So, it is natural to consider the solution of \eqref{complexNLH2} whose Fourier transform is supported in $\mathbb{R}_+$.  Guo, Ninomiya, Shimojo and Yanagida \cite{GuNiShYa2013} gave a systematical study to \eqref{complexNLH2} for the continuous and bounded initial data, they show that if the convex hull of the image of the initial data does not intersect the positive real axis, then the solution exists globally in time and converges to the trivial steady state. In one-dimensional space, they provide some solutions with nontrivial imaginary parts that blow up simultaneously. If the initial data are asymptotically constants, they also obtained some global existence and blowup results.  Chouichi,  Otsmane and Tayachi \cite{ChOtTa2015} considered a class of  continuous, bounded and decaying initial data like $\mathfrak{Re} u_0 \sim |x|^{-2\alpha}$, $\mathfrak{Im} u_0 \sim |x|^{-2\beta}$ (as $|x|\to \infty$) and they obtained the existence and the asymptotic behavior of global solutions. Harada \cite{Ha2016,Ha2017} constructed the non-simultaneous blow-up solutions in one spatial dimension.
 For the general complex-valued SLH \eqref{model},  Chouichi,  Majdoub and Tayachi \cite{ChMaTa2018} generalized the results in \cite{ChOtTa2015} to any power nonlinearity $u^m$, $2\leq m\in \mathbb{N}$.  The asymptotic behavior of the blow-up profiles at blow-up time were constructed in  Nouaili and Zaag \cite{NoZa2015} and Duong \cite{Du2019} for $m=2$ and for $m \geq 2$, respectively. For the irrational power $m >1$, an interesting  blowup complex solution with one blowup point was constructed in Duong \cite{Du2019a} and its asymptotic profiles were also obtained.  The local or global solutions obtained in \cite{ChOtTa2015,ChMaTa2018,Du2019,Du2019a,GuNiShYa2013,Ha2016,Ha2017,NoZa2015} are  in the space $L^\infty(\mathbb{R}^d) \cap C(\mathbb{R}^d)$ and the blowing up means that
 $$
 \lim\sup_{t\to T}\|u(t)\|_{L^\infty}=\infty
 $$
 for some $0<T<\infty.$

Now we state our main results and indicate the crucial ideas. One of our main results is the following: Let $f(u)=u^m$, $m\geq 2$, $s<0, \, \sigma \geq d/2-2/(m-1)$.  Assume that $u_0 \in E^s_\sigma$ with $ \widehat{u}_0 $ supported in the first octant away from the origin.\footnote{Such a kind of initial data have a direct relation with \eqref{halflinesupport} and they are complex-valued.}  Then there exists $s_0\leq s$ such that SLH \eqref{model} for has a unique solution $u \in C([0,\infty); E^{s_0}_\sigma) \cap \widetilde{L} ^m (0,\infty; E^{s_0, \sigma+2/m}_{2,2} )$, where $\widetilde{L} ^m (0,\infty; E^{s_0, \sigma+2/m}_{2,2} )$ is a  resolution space defined in \eqref{resolutionspace}. It seems that such a kind of results are a bit surprising, since we have no smallness condition on initial data and moreover, the initial data can be rougher than those in any Sobolev space $H^r$ with negative index $r <0$.

One of the main difficulties to solve SLH in supercritical Sobolev spaces $H^\sigma$ lies in the fact that one cannot make self-contained nonlinear mapping estimates and the ill-posedness occurs in supercritical Sobolev spaces $H^\sigma$. However, $E^s_\sigma$ type spaces have some good algebraic structures when the frequency is localized in the first octant, so that the nonlinear estimates become available in $E^s_\sigma$ type spaces. Using the exponential decay for the semigroup $e^{t\Delta}$, one can get a global well-posedness result of SLH for sufficiently small initial data in $E^s_\sigma$ for which their Fourier transforms are supported in the first octant away from the origin.

Recall that $ \dot H^{d/2-2/(m-1)}$ ($\dot H^\sigma= (-\Delta)^{-\sigma/2} L^2$) is  the scaling critical Sobolev space of SLH,  which means that the scaling solution $u_\lambda (t,x)= \lambda^{ 2/(m-1)} u(\lambda^2 t, \lambda x)$  is invariant in $\dot H^{d/2-2/(m-1)}$ for all $\lambda>0$.  Let us observe that $u_\lambda|_{t=0}$ in the supercritical space $ \dot H^\sigma$, for any $\sigma <d/2-2/(m-1)$,
\begin{align*}
\|u_\lambda|_{t=0}\|_{ \dot H^\sigma} =   \lambda^{2/(m-1)+ \sigma -d/2} \|u_0\|_{\dot H^\sigma} \to 0, \ \ \lambda \to \infty.
\end{align*}
It follows that the scaling solution can have very small initial data in supercritical Sobolev spaces. The above observation is also adapted to the supercritical space $E^s_\sigma$ ($s<0$),  $u_\lambda$ will vanish in $E^s_\sigma$  when $\lambda \to \infty$, which means that any large data in $E^s_\sigma$ can become small data by the scaling argument. This is why we can handle the large data in  $E^s_\sigma$ ($s<0$).

\subsection{Some notations and prelimilaries}

We denote by $L^p_x$ the Lebesgue space on $x\in \mathbb{R}^d$, i.e.,
$$
\|f\|_p :=\|f\|_{L^p_x} = \left(\int_{\mathbb{R}^d} |f(x)|^p dx \right)^{1/p}.
$$
For any function $g$ of $(t,x) \in \mathbb{R}_+ \times \mathbb{R}^d$, we denote
$$
\|g\|_{L^\gamma_t L^p_x(\mathbb{R}_+ \times \mathbb{R}^d)} = \|\|g\|_{L^p_x(\mathbb{R}^d)}\|_{L^\gamma_t(\mathbb{R}_+)},
$$
where $L^\gamma_t$ can be defined in a similar way as $L^p_x$ by replacing $\mathbb{R}^d$ with $\mathbb{R}_+ $. If there is no confusion, we will write $L^\gamma_t L^p_x =L^\gamma_t L^p_x(\mathbb{R}_+ \times \mathbb{R}^d)$.  Let us denote $\langle \nabla\rangle^s = \mathscr{F}^{-1} \langle \xi\rangle^s \mathscr{F}$, $2^{s|\nabla|}  = \mathscr{F}^{-1} 2^{s|\xi|} \mathscr{F}$.   We will use the frequency-uniform decomposition techniques, which were first applied to nonlinear PDE in \cite{WaZhGu2006}, see also some recent works \cite{BaVi2021,ChHuKuPa2019,Iw10,Ka2017,Ka2018,Pa2019,Pa2019a,SuWaZh15,WaHu07,WaHud07} and their references in the study for a variety of nonlinear evolution equations. For any $k\in \mathbb{Z}^d$, we denote
\begin{align}
\Box_k = \mathscr{F}^{-1} \chi_{k+[0,1)^d}\mathscr{F}, \label{wavepacketstran}
\end{align}
where $\chi_{A}$ denote the characteristic function on $A \subset \mathbb{R}^d$. $ \Box_k $ ($k\in \mathbb{Z}^d$) are said to be the frequency-uniform decomposition operators. In view of Plancherel's identity and the orthogonality of $\Box_k$, we see that
\begin{align} \label{equivnormes}
\|u\|_{E^s_\sigma} \sim  \left(\sum_{k\in \mathbb{Z}^d} 2^{2s|k|} \langle k\rangle^{2 \sigma} \|\Box_k u\|^2_2 \right)^{1/2}.
\end{align}
If $1 < p\leq q\leq \infty$, we have (cf. \cite{FeGrLiWa2021,WaZhGu2006})
\begin{align} \label{LqLpest}
\|\Box_k u\|_{L^q_x}  \lesssim  \|\Box_k u\|_{L^p_x}
\end{align}
holds for all $k\in \mathbb{Z}^d$. Since $\widehat{\Box_k u}$ is supported on $k+[0,1)^d$, we see that
\begin{align} \label{orthog}
\Box_k(\Box_{k_1} u ... \Box_{k_m} u) =0, \ \  |k-k_1-...-k_m|_\infty > m+1.
\end{align}
This is an important fact for the frequency-uniform decomposition, which will be repeatedly applied in the paper. For convenience, we denote for any $\mathbb{A} \subset \mathbb{Z}^d$,
\begin{align} \label{resolutionspace}
& \|u\|_{\widetilde{L} ^\gamma (0,\infty; E^{s,\sigma}_{p,q} (\mathbb{A})) } =  \left(\sum_{k\in \mathbb{A}} 2^{s|k|q} \langle k\rangle^{\sigma q} \|\Box_k u  \|^q_{L^\gamma_t L^p_x (\mathbb{R}_+\times\mathbb{R}^d)} \right)^{1/q}
\end{align}
and $\widetilde{L} ^\gamma (0,\infty; E^{s,\sigma}_{p,q} ) := \widetilde{L} ^\gamma (0,\infty; E^{s,\sigma}_{p,q} (\mathbb{Z}^d))$,  $\widetilde{L} ^\gamma (0,\infty; E^{s}_{p,q} ) := \widetilde{L} ^\gamma (0,\infty; E^{s,0}_{p,q})$.

 Throughout this paper, we write $|x|=|x(1)|+...+ |x(d)| $, $|x|_\infty = \max_{1\leq i\leq d} |x(i)|$ and $\langle x\rangle=(1+x(1)^2+...+x(d)^2)^{1/2}$ for  $x=(x(1),...,x(d)) \in \mathbb{R}^d$.  We will use the following notations. $C\ge 1, \ c\le 1$ will denote constants which can be different at different places, we will use $A\lesssim B$ to denote   $A\leqslant CB$; $A\sim B$ means that $A\lesssim B$ and $B\lesssim A$, $A\vee B= \max(A,B)$.   We denote by  $\mathscr{F}^{-1}f$ the inverse Fourier transform of $f$.  For any $1\leq p \leq \infty$, we denote by $p'$ the dual number of $p$, i.e., $1/p+1/p'=1$,  $\ell^p$  stands for the sequence Lebesgue space.   The following inequality will be used in this paper (cf. \cite{BeLo1976,WaHuHaGu2011}).

\begin{prop}\label{Nikolskii}
{\rm (Multiplier estimate)}  Let  $1 \leq r\le \infty, \ L \geq [d/2]+1$ and $\rho\in H^L$.   Then
we have
\begin{align} \label{1.35}
\|\mathscr{F}^{-1}\rho  \|_1  \lesssim \|\rho\|^{1-d/2L}_{2} \left(\sum^d_{i=1} \|\partial^L_{x_i}\rho \|_{2} \right)^{d/2L} .
\end{align}
\end{prop}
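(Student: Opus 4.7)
The plan is the classical split-at-radius-$R$ argument combined with Plancherel and optimization in $R$. First I would write
\[
\|\mathscr{F}^{-1}\rho\|_1 = \int_{|x|\le R}|\mathscr{F}^{-1}\rho(x)|\,dx + \int_{|x|> R}|\mathscr{F}^{-1}\rho(x)|\,dx =: I+II,
\]
and estimate each piece by Cauchy--Schwarz. For $I$, the volume of a ball gives $I \lesssim R^{d/2}\|\mathscr{F}^{-1}\rho\|_{2} = R^{d/2}\|\rho\|_{2}$ by Plancherel. For $II$, I would insert the weight $|x|^{L}$:
\[
II = \int_{|x|>R}\frac{1}{|x|^{L}}\,|x|^{L}|\mathscr{F}^{-1}\rho(x)|\,dx
\le \Bigl(\int_{|x|>R}|x|^{-2L}dx\Bigr)^{1/2}\bigl\||x|^{L}\mathscr{F}^{-1}\rho\bigr\|_{2}.
\]
The integral $\int_{|x|>R}|x|^{-2L}dx$ converges precisely because $2L > d$ (this is where the hypothesis $L\ge[d/2]+1$ is used) and equals a constant times $R^{d/2-L}$ after taking the square root.

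Next I would convert the weight $|x|^{L}$ into derivatives of $\rho$. Since $|x|^{L}\lesssim \sum_{i=1}^{d}|x_i|^{L}$ (norm equivalence on $\mathbb{R}^{d}$ raised to the $L$-th power), and since $x_i^{L}\mathscr{F}^{-1}\rho = c_L\,\mathscr{F}^{-1}(\partial_{x_i}^{L}\rho)$ up to an $i^{L}$ factor, Plancherel gives
\[
\bigl\||x|^{L}\mathscr{F}^{-1}\rho\bigr\|_{2} \lesssim \sum_{i=1}^{d}\|\partial_{x_i}^{L}\rho\|_{2}.
\]
Combining, $II \lesssim R^{d/2-L}\sum_{i}\|\partial_{x_i}^{L}\rho\|_{2}$.

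Finally I would optimize in $R>0$, equating the two bounds $R^{d/2}\|\rho\|_{2}\sim R^{d/2-L}\sum_{i}\|\partial_{x_i}^{L}\rho\|_{2}$, which gives
\[
R^{L} = \Bigl(\sum_{i=1}^{d}\|\partial_{x_i}^{L}\rho\|_{2}\Bigr)\Big/\|\rho\|_{2},
\]
and substituting back produces exactly
\[
\|\mathscr{F}^{-1}\rho\|_{1} \lesssim \|\rho\|_{2}^{1-d/2L}\Bigl(\sum_{i=1}^{d}\|\partial_{x_i}^{L}\rho\|_{2}\Bigr)^{d/2L}.
\]
There is no substantive obstacle: the only delicate points are checking $2L>d$ so that $|x|^{-2L}$ is integrable at infinity, and passing from the vector weight $|x|^{L}$ to the single-variable derivatives $\partial_{x_i}^{L}$. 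Both are handled by elementary norm comparison and Plancherel. The argument also needs $\rho\in L^{2}$, guaranteed by $H^{L}\hookrightarrow L^{2}$, so all quantities in the estimate are finite a priori.
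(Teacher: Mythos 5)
Your argument is correct and complete: the radius-$R$ splitting, Cauchy--Schwarz with the weight $|x|^{L}$ (convergent at infinity exactly because $L\ge[d/2]+1$ gives $2L>d$), Plancherel to convert $x_i^{L}$ into $\partial_{x_i}^{L}$, and optimization in $R$ is precisely the standard proof of this Bernstein--Nikolskii-type estimate, which the paper does not reprove but delegates to the cited references. The only minor point worth recording is that the optimization in $R$ is legitimate in the degenerate cases as well, since $\|\rho\|_{2}=0$ or $\sum_{i}\|\partial_{x_i}^{L}\rho\|_{2}=0$ each force $\rho=0$ in $L^{2}$.
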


The paper is organized as follows. In Sections  \ref{sectNLH1D} and \ref{sectNLHhD} we consider the global existence and uniqueness  of solutions in $E^s_\sigma$ for the semi-linear heat equations with the nonlinearity $u^m$, $ m\geq 2$ by assuming that the Fourier transforms of initial data supported in the first octant and away from $0$, where Fujita's critical or subcritical powers are contained in 1D and 2D.  If $m\geq 2\vee ( 1+4/d)$, we can remove the condition that the Fourier transforms of initial data are away from the origin. In Section \ref{sectNLHexp}, we show the global existence and uniqueness of solutions in $E^s_\sigma$  for the semi-linear heat equations with an exponential nonlinearity $e^u-1$. In Section \ref{SLHmodspaces} we can generalize the above results to the spaces $E^s_{2,1}$.  Finally, we give  an example  to describe the solutions of the semi-linear heat equations.

\section{SLH in $E^s_\sigma$} \label{sectNLH1D}

In this section, we will consider \eqref{model} with $f(u)=u^m$ in the super-critical space $E^s_\sigma$. Denote
$$
\mathbb{R}^d_I := \{\xi\in \mathbb{R}^d :\  \xi(j) \geq 0, \ j=1,..., d\}, \ \  \mathbb{Z}^d_I = \mathbb{R}^d_I \cap \mathbb{Z}^d.
$$
We have the following

\begin{thm}\label{1DNLHglobal}
Let $d \geq 1$,  $f(u)=u^m$, $m\in \mathbb{N}\setminus \{1\}$. Let $s \leq 0, \ \sigma \geq d/2-2/(m-1)$,  $u_0\in E^{s}_\sigma$ with $\mathrm{supp}~\widehat{u}_0\subset \mathbb{R}^d_I  \setminus \{0\}$. We have the following results.
\begin{itemize}

\item[\rm (i)] If $s<0$, then there exists $s_0   \leq s$ such that SLH \eqref{model} has a unique solution $u\in \widetilde{L}^m (0,\infty; \, E^{s_0, \sigma+2/m}_{2,2} )$ satisfying  the equivalent integral equation
\begin{align} \label{1DNLHI}
u(t) = e^{t\Delta} u_0 + \int_0^t e^{(t-\tau)\Delta}u^m(\tau)~d\tau
\end{align}
and
\begin{align} \label{existspace}
u\in C([0,\infty); E^{s_0}_\sigma)\cap \widetilde{L}^\infty (0,\infty; \, E^{s_0, \sigma}_{2,2} ) \cap \widetilde{L}^1 (0,\infty; \, E^{s_0, 2+ \sigma}_{2,2} ).
\end{align}
Moreover, condition ${\rm supp}\, \widehat{u}_0 \subset \mathbb{R}^d_I$  is sharp in the sense that, for any $s_0 \leq s$, the solution map $u_0\to u$ from $E^s_{\sigma} $ into $  E^{s_0}_{\sigma}$  is not  $C^m$ for certain initial data whose Fourier transforms are supported in   $\mathbb{R}^d_I \cup (-\mathbb{R}^d_I)$.

\item[\rm (ii)] Let $s=0$.  Assume that  $\|u_0\|_{E^{0}_\sigma}$ is sufficiently small, then \eqref{1DNLHI} has a unique solution $u$ satisfying \eqref{existspace} for $s_0=0$. Moreover, $\sigma \geq \sigma_c = d/2-2/(m-1)$ is  optimal in the sense that the solution map   $u_0  \to  u$   is not  $C^m$  in $E^0_{\sigma,+}:= \{f\in E^0_\sigma: \, {\rm supp} \widehat{f} \subset \mathbb{R}^d_I \setminus \{0\}\}$ if $\sigma < \sigma_c$.

\item[\rm (iii)] Let $s<0$ and $s_0$ be as in (i). Let $\{u^{(j)}\}$ be the sequence of the iteration solutions
\begin{align} \label{iterationseqm}
u^{(j+1)}(t)=   e^{t\Delta} u_0 +   \int_0^t e^{(t-\tau)\Delta}u^{(j)}(\tau)^m d\tau, \ \ u^{(0)} =0.
\end{align}
Then for any $\tilde{s}_0 < s_0$, there exists $C>1$ such that
$$
\|u^{(j)}(t)-u(t) \|_{E^{\tilde{s}_0}} \leq \frac{C^j}{(j\, !)^2}, \ \ \forall \  t \geq 0, \ j\in \mathbb{N}.
$$

\end{itemize}

 \end{thm}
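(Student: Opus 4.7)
The plan is to convert \eqref{model} to the integral equation \eqref{1DNLHI} and to run a Picard contraction in the resolution space $X_{s_0}:=\widetilde{L}^m(0,\infty;E^{s_0,\sigma+2/m}_{2,2})$, restricted to functions whose Fourier transforms are supported in $\mathbb{R}^d_I\setminus\{0\}$ for every $t$. Two structural facts drive the analysis. First, on a frequency block the heat semigroup obeys
$$
\|\Box_k e^{t\Delta}v\|_{L^\gamma_t L^2_x}\lesssim \langle k\rangle^{-2/\gamma}\|\Box_k v\|_{L^2_x},\qquad |k|\geq 1,
$$
so after squaring and summing against $\langle k\rangle^{2\sigma}2^{2s_0|k|}$ one obtains $\|e^{t\Delta}u_0\|_{X_{s_0}}\lesssim \|u_0\|_{E^{s_0}_\sigma}$. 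Second, because every iterate inherits the octant support, any non-trivial contribution to $\Box_k(u^m)$ comes from $k=k_1+\cdots+k_m$ with $k_j\in\mathbb{Z}^d_I$; on the first octant the norm $|\cdot|$ is additive, so $|k|=|k_1|+\cdots+|k_m|$ and
$$
2^{s_0|k|}\;=\;\prod_{j=1}^{m}2^{s_0|k_j|}.
$$
This factorization is the substitute for the algebra/embedding step that fails in the supercritical regime: combined with \eqref{orthog}, Young's inequality in $\ell^2$, and Hölder in time, it yields the multilinear bound $\|u^m\|_{\widetilde{L}^{m/(m-1)}(E^{s_0,\sigma}_{2,2})}\lesssim \|u\|_{X_{s_0}}^m$, which together with the linear estimate closes a contraction provided the data is small in $E^{s_0}_\sigma$.

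Large data are handled by scaling. The rescaling $u_\lambda(t,x)=\lambda^{2/(m-1)}u(\lambda^2 t,\lambda x)$ preserves \eqref{model}, and under $\xi\mapsto\xi/\lambda$ the weight $2^{s|\xi|}$ becomes $2^{(s/\lambda)|\xi|}$; since $s<0$ and $\sigma\geq d/2-2/(m-1)$, the $E^{s/\lambda}_\sigma$-norm of $u_\lambda(0)$ can be made arbitrarily small by choosing $\lambda$ large, so setting $s_0=s/\lambda$ reduces the problem to the small-data case and, after undoing the scaling, furnishes the solution claimed in part (i). Membership in $C([0,\infty);E^{s_0}_\sigma)\cap\widetilde{L}^\infty(E^{s_0,\sigma}_{2,2})\cap\widetilde{L}^1(E^{s_0,\sigma+2}_{2,2})$ follows by substituting the fixed point back into \eqref{1DNLHI} and re-running the heat decay estimate with $\gamma=\infty$ and $\gamma=1$.

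The sharpness in (i) is obtained by taking $u_0=\Box_k h+\Box_{-k}h$ supported on a pair of opposite frequency cubes and examining the $m$-th Fréchet derivative of $u_0\mapsto u$ at $0$: the mixed monomial $(\Box_k h)^p(\Box_{-k}h)^{m-p}$ lives at frequency near $(2p-m)k$, whose $|\cdot|$-size is much smaller than $|k|p+|k|(m-p)=m|k|$, so the factorization $2^{s|\cdot|}=\prod 2^{s|k_j|}$ is destroyed and tuning $|k|$ makes the derivative unbounded between $E^s_\sigma$ and any $E^{s_0}_\sigma$. For (ii), $\sigma_c=d/2-2/(m-1)$ is the scaling-critical Sobolev index and the standard lacunary-frequency construction shows that the $m$-th derivative of the flow map at $0$ diverges once $\sigma<\sigma_c$.

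The main obstacle is part (iii). Writing $w_j:=u^{(j)}-u^{(j-1)}$ one has $w_1=e^{t\Delta}u_0$ and
$$
w_{j+1}(t)=\int_0^t e^{(t-\tau)\Delta}\bigl(\mathcal{P}_{j}(\tau)\,w_j(\tau)\bigr)d\tau,
$$
where $\mathcal{P}_j$ is an $(m-1)$-linear polynomial in $u^{(j)},u^{(j-1)}$, both uniformly controlled in $X_{s_0}$ by part (i). Iterating $j$ times expresses $w_{j+1}$ as a $j$-fold nested Duhamel integral; the Volterra simplex contributes one factor $t^j/j!$. The second $1/j!$ comes from passing to the strictly weaker space $E^{\tilde s_0}$ with $\tilde s_0<s_0$: the surplus exponential weight $2^{(\tilde s_0-s_0)|k|}$, distributed via the factorization $|k|=|k_1|+\cdots+|k_N|$ over the frequency tree of size $N=N(j)$ produced by the iteration, provides exponential decay on $\mathrm{supp}\,\widehat{w_j}$, which, balanced against the combinatorial multiplicities of the convolutions and the $\langle k_i\rangle^\sigma$ weights, translates after Stirling into a second factor $1/j!$. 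Summing $u^{(j)}-u=\sum_{\ell>j}w_\ell$ then yields the claimed bound $C^j/(j!)^2$. The delicate book-keeping needed to convert the surplus exponential weight into a genuine second factorial gain, against the combinatorial spread of the iterated convolutions, is the heart of the argument.
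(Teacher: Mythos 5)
Your overall architecture for the existence part (contraction in $\widetilde{L}^m(0,\infty;E^{s,\sigma+2/m}_{2,2})$ using the octant support to run the multilinear estimate, then scaling to reduce large data to small data) matches the paper, but your scaling step contains a genuine error. You claim that under $u_{0,\lambda}(x)=\lambda^{2/(m-1)}u_0(\lambda x)$ the weight becomes $2^{(s/\lambda)|\xi|}$ and that $\|u_{0,\lambda}\|_{E^{s/\lambda}_\sigma}$ can be made arbitrarily small. Computing directly, $\|u_{0,\lambda}\|_{E^{s/\lambda}_\sigma}=\lambda^{2/(m-1)-d/2}\|\langle\lambda\eta\rangle^\sigma 2^{s|\eta|}\widehat{u}_0(\eta)\|_2$, which at $\sigma=\sigma_c$ is merely comparable to $\|u_0\|_{E^s_\sigma}$ and for $\sigma>\sigma_c$ can even grow; there is no smallness there. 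The mechanism that actually works is to measure $u_{0,\lambda}$ in the \emph{same} space $E^s_\sigma$: since $\mathrm{supp}\,\widehat{u}_0\subset\{|\xi|\geq\varepsilon_0\}$ for some $\varepsilon_0>0$, dilation pushes the support to $\{|\xi|\geq\lambda\varepsilon_0\}$ where $2^{s|\xi|}\leq 2^{s(\lambda-1)\varepsilon_0}2^{s|\xi|/\lambda}$, producing an \emph{exponential} gain $2^{s(\lambda-1)\varepsilon_0}$ that beats the polynomial loss (Lemma \ref{scaling}). The price is paid only when undoing the scaling: the dilation by $1/\lambda$ sends $E^s_\sigma$ to $E^{\lambda s}_\sigma$, so $s_0=\lambda s\leq s$, not $s/\lambda$ (your value of $s_0$ would lie strictly between $s$ and $0$, contradicting $s_0\leq s$). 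You also never use the hypothesis that the support is bounded away from the origin, which is exactly what makes this work; without it the large-data claim fails for $\sigma>\sigma_c$. (A smaller point: Hölder in time for $m$ factors each in $L^m_t$ places $u^m$ in $\widetilde{L}^1$, not $\widetilde{L}^{m/(m-1)}$, and $\widetilde{L}^1(E^{s,\sigma}_{2,2})$ is the correct dual exponent for the Duhamel estimate \eqref{basicest2}.)

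Part (iii) is where your argument genuinely breaks down: neither of your two proposed sources of $1/(j!)^2$ is valid. The Volterra simplex volume $t^j/j!$ is not uniform in $t$, whereas the claimed bound must hold for all $t\geq 0$; and the surplus weight $2^{(\tilde{s}_0-s_0)|k|}$ evaluated on frequencies $|k|\gtrsim j$ yields only a geometric factor $2^{-cj}$ per step, which is far weaker than $1/j!$ and in any case is absorbed into $C^j$. The correct mechanism, which you partially gesture at with the ``frequency tree,'' is the following pair of facts: first, by induction on the iteration and the additivity of frequencies on $\mathbb{R}^d_I$, $\mathrm{supp}\,\mathscr{F}(u-u^{(j)})\subset\{\xi\in\mathbb{R}^d_I:|\xi|\geq(m-1)j\varepsilon_0\}$ (Lemma \ref{supportiterat}); second, the Duhamel operator $\int_0^te^{(t-\tau)\Delta}\,d\tau$ acting on data supported in $\{|\xi|_\infty\geq A\}$ gains a factor $A^{-2}$ in $\widetilde{L}^\infty$-based norms, uniformly in $t$, from $\int_0^te^{-c(t-\tau)|k|^2}d\tau\lesssim|k|^{-2}$ (Lemma \ref{Errorlem}). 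Feeding the support information into the smoothing estimate gives a gain of $Cj^{-2}$ at the $j$-th step of the induction $\|v-v^{j}\|\leq Cj^{-2}\|v-v^{j-1}\|$ (measured in the $\ell^1$-weighted norm $\sum_l2^{\tilde{s}_0|l|}\|\cdot\|_{L^\infty_tL^2_\xi(l+[0,1)^d)}$, which is where $\tilde{s}_0<s_0$ is used, via Cauchy--Schwarz, to control the convolution algebra --- not to generate factorial decay), and the product over $j$ steps is $C^j/(j!)^2$. Your sharpness constructions for (i) and (ii) are in essence the paper's (sign-mixed frequency pairs causing cancellation in $|k_1+\cdots+k_m|$, and the scaling-critical single-bump computation at $t_N\sim N^{-2}$), so those parts are fine as sketches.
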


Theorem \ref{1DNLHglobal} needs several remarks.

\begin{itemize}

\item[(i)] From condition $\mathrm{supp}~\widehat{u}_0\subset \mathbb{R}^d_I  \setminus \{0\}$, we see that there exists $\varepsilon_0 >0$ such that $\mathrm{supp}~\widehat{u}_0\subset \mathbb{R}^d_I  \setminus \{\xi: \, |\xi|< \varepsilon_0\} $. If $s<0$, $s_0:= s_0(\varepsilon_0, m, \sigma,s, \|u_0\|_{E^s_\sigma}) \leq s$ comes from the scaling argument of the solutions.  For example, in the scaling critical case $\sigma = d/2- 2/(m-1)$, we can take
$$
s_0  =s -( C +  \log_2^{1+ \|u_0\|_{E^s_\sigma}})   \varepsilon^{-1}_0 ,
$$
 see Remark \ref{s0exp} for details. However, if $\|u_0\|_{E^s_\sigma}$ is sufficiently small and ${\rm supp}\, \widehat{u}_0 \subset \mathbb{R}^d_I \setminus [0,1)^d$, then we can take $s_0=s$ in \eqref{existspace}.

\item[(ii)]  For the global existence and uniqueness results in Theorem \ref{1DNLHglobal}, we have no condition on the size of $u_0 \in E^s_\sigma$ and as indicated in Proposition \ref{embedding},  $E^s_\sigma$ is a rather rough space. Recall that condition  $\mathrm{supp}~\widehat{u}_0\subset \mathbb{R}^d_I\setminus \{0\}$ implies that $u_0$ is a complex valued function, such a kind of initial data are different from the nonnegative data in \cite{Fu1966,Ha1973,We1981}, also different from the initial data $\psi$ in \cite{CaDiWe2009}. For instance, let $d=1$, for any $k\in \mathbb{Z}_+$,
\begin{align}
    u_0 &= A\, e^{\mathrm{i}x}  \frac{d^k}{dx^k}\left(\delta (x) + \frac{2 \, \mathrm{ i}}{x}\right),  \  A\in \mathbb{C}, \label{example1}\\
   u_0 & = e^{\mathrm{i}x} \sum^\infty_{m=0} \frac{\lambda^m}{m!} (-{\rm i})^m \frac{d^k}{dx^k} \left(\delta (x) + \frac{2 \, \mathrm{ i}}{x}\right), \ \ |\lambda|< |s| . \label{example2}
\end{align}
 satisfy the condition of Theorem \ref{1DNLHglobal}, where $\delta$ is the Dirac measure.

 In \cite{GuNiShYa2013}, Guo, Ninomiya, Shimojo and Yanagida considered the SLH with nonlinearity $u^2$ and they showed that if $u_0 \in L^\infty (\mathbb{R}^d) \cap C(\mathbb{R}^d)$, $\mathfrak{Re} \, u_0 (x) < A \, \mathfrak{Im} \, u_0 (x)$ for some $A>0$ and for all $x\in \mathbb{R}^d$, then SLH has a unique global solution in $\in L^\infty (\mathbb{R}) \cap C(\mathbb{R})$. In 1D case, they obtained the blowup solution if either $\mathfrak{Re} \widehat{u}_0 (\xi)>0$  and $\mathfrak{Im} \widehat{u}_0 (\xi)=0$ for all $\xi \in \mathbb{R}$; or $\mathfrak{Re} u$ even, $\mathfrak{Im}u$ odd and $\mathfrak{Im}\, u_0(x)>0$ for all $x>0$. In  Theorem \ref{1DNLHglobal} we need $\widehat{u}_0(\xi) =0$ for any $\xi\in \mathbb{R}^d\setminus \mathbb{R}^d_I$, which has no implicit relations with pointwise condition $\mathfrak{Re} \, u_0   < A \, \mathfrak{Im} \, u_0 $ in \cite{GuNiShYa2013} and the  decaying condition $\mathfrak{Re} u_0 \sim c |x|^{-2\alpha}$, $\mathfrak{Im} u_0 \sim c |x|^{-2\beta}$ (as $|x|\to \infty$) in \cite{ChOtTa2015,ChMaTa2018}. Moreover, one easily sees that $\widehat{u}_0$ supported in $\mathbb{R}^d_I$ does not contradict  the blowup conditions in \cite{GuNiShYa2013}.

\item[(iii)] Recall that $E^0_{\sigma_c} = H^{\sigma_c}$ with $\sigma_c = d/2- 2/(m-1)$ is the scaling critical space of the SLH.  If ${\rm supp} \, \widehat{u}_0$ can be any subset of the whole line $ \mathbb{R}$ and $m\geq 3$, Molinet, Ribaud and Youssfi \cite{MoRiYo2002} showed that SLH in 1D is locally well-posed in $H^\sigma$ for $\sigma\geq \sigma_c$ and  ill-posed in $H^\sigma$ for $\sigma < \sigma_c$.  In the case $m=2$, they obtained that SLH in 1D is local well-posed in $H^\sigma$ for $\sigma >-1$ and ill-posed in $H^\sigma$ for $\sigma <-1$, and the local well-posedness in the critical case $\sigma=-1$ was shown by Molinet and Tayachi \cite{MoTa2015}, where the fractional NLH was also studied. However, in the case $m=2$ in 1D, our results indicate that the critical space is $H^{-3/2}$ if the Fourier transforms of solutions are supported in $\mathbb{R}^d_I$, which is different from $H^{-1}$ in \cite{MoRiYo2002,MoTa2015} (see also \cite{Iw10}). If $m\geq 1+4/d$, the existence of small data global solutions in (ii) of Theorem \ref{1DNLHglobal} seems known, which can be derived by following the global well-posedness in $L^r$ in Giga \cite{Gi1986} together with the standard regularity arguments.

\end{itemize}

\subsection{Linear and multi-linear estimates}

In Theorem \ref{1DNLHglobal}, one needs that the Fourier transforms of initial data and solutions supported in $\mathbb{R}^d_I \setminus \{0\}$. By scaling argument, we can first assume that
\begin{align} \label{rdplus}
{\rm supp}\, \widehat{u}_0 \subset \mathbb{R}^d_{I,+} : = \left\{\xi\in \mathbb{R}^d_I: \max_{1\leq j\leq d}\xi(j)\geq 1 \right\}.
\end{align}
The semi-group $e^{t\Delta}$ has very good regularity estimates in Besov and Triebel spaces, see for instance Iwabuchi and Nakamura \cite{IwNa2013}, Ogawa and Shimizu \cite{OgSh2010,OgSh2020}, Kozono, Okada and Shimizu \cite{KoOkSh2020,KoSh2019} and in modulation spaces \cite{Iw10, WaZhGu2006}. Concerning the very rough data, we have

\begin{lemma}\label{linearestimate}
Let $d\geq 1$, $s\leq 0$, $\sigma \in \mathbb{R}$, $1\leq \gamma_1\leq \gamma \leq \infty$.  Suppose that $\mathrm{supp}~ \mathscr{F}_x f(t), \, {\rm supp}\, \widehat{u}_0\subset \mathbb{R}^d_{I,+}$. Then we have
		\begin{align}
			\|e^{t\Delta}u_0\|_{ \widetilde{L} ^\gamma (0,\infty; E^{s,\sigma+2/\gamma}_{2,2})  } & \lesssim \|u_0\|_{E^s_\sigma},  \label{basicest1} \\
			\left\|\int_0^t e^{(t-\tau)\Delta}f(\tau)~d\tau\right\|_{\widetilde{L} ^\gamma (0,\infty; E^{s,\sigma+2/\gamma}_{2,2}) } &\lesssim \|f\|_{\widetilde{L} ^{\gamma_1} (0,\infty; E^{s,\sigma-2/\gamma'_1}_{2,2}) }.  \label{basicest2}
		\end{align}
	\end{lemma}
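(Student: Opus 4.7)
The plan is to decompose everything via $\Box_k$ and exploit the key geometric observation that the Fourier support constraint $\mathbb{R}^d_{I,+}$ forces a uniform pointwise bound $|\xi|_{\text{Euclid}}^2 \gtrsim \langle k\rangle^2$ on the support of $\widehat{\Box_k u_0}$, which in turn makes the heat multiplier $e^{-t|\xi|^2}$ supply a factor $e^{-ct\langle k\rangle^2}$.

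\textbf{Step 1 (geometric lemma).} I first observe that if $(k+[0,1)^d) \cap \mathbb{R}^d_{I,+}$ is nonempty then necessarily $k \in \mathbb{Z}^d_I \setminus \{0\}$ (because on the cell based at $0$ every component is strictly less than $1$). For such $k$ at least one component $k(j) \geq 1$, so for every $\xi \in k+[0,1)^d$ one has $\xi(j) \geq k(j) \geq 0$ componentwise, and hence
\[
|\xi|_{\text{Euclid}}^2 = \sum_j \xi(j)^2 \;\geq\; \sum_j k(j)^2 \;\gtrsim\; \langle k\rangle^2,
\]
the last comparison holding because $\sum_j k(j)^2 \geq 1$ whenever $k\neq 0$.

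\textbf{Step 2 (single-packet heat estimate).} Since $\widehat{\Box_k u_0}$ lives on $k+[0,1)^d$, Plancherel and Step 1 give
\[
\|e^{t\Delta}\Box_k u_0\|_{L^2_x} \;\leq\; e^{-ct\langle k\rangle^2}\|\Box_k u_0\|_{L^2_x}.
\]
Taking the $L^\gamma_t$ norm, $\|e^{-ct\langle k\rangle^2}\|_{L^\gamma_t(\mathbb{R}_+)} \sim \langle k\rangle^{-2/\gamma}$, so
\[
\|e^{t\Delta}\Box_k u_0\|_{L^\gamma_t L^2_x} \;\lesssim\; \langle k\rangle^{-2/\gamma}\|\Box_k u_0\|_{L^2_x}.
\]
Multiplying by the weight $2^{s|k|}\langle k\rangle^{\sigma+2/\gamma}$ and squaring-summing in $k$ (using the equivalent norm \eqref{equivnormes} and the Plancherel-type identity \eqref{resolutionspace}) yields \eqref{basicest1} at once.

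\textbf{Step 3 (Duhamel term).} For \eqref{basicest2}, since $\mathscr{F}_x f(\tau)$ is also supported in $\mathbb{R}^d_{I,+}$, the same pointwise comparison applies to each $\Box_k f(\tau)$. By Minkowski in $\tau$ and Step 1,
\[
\Bigl\|\int_0^t e^{(t-\tau)\Delta}\Box_k f(\tau)\,d\tau\Bigr\|_{L^2_x} \;\leq\; \int_0^t e^{-c(t-\tau)\langle k\rangle^2}\|\Box_k f(\tau)\|_{L^2_x}\,d\tau.
\]
Young's convolution inequality in $t$ with $1+1/\gamma = 1/\gamma_2+1/\gamma_1$ (so $1/\gamma_2 = 1/\gamma + 1/\gamma_1'$) gives
\[
\Bigl\|\int_0^t e^{(t-\tau)\Delta}\Box_k f(\tau)\,d\tau\Bigr\|_{L^\gamma_t L^2_x} \;\lesssim\; \langle k\rangle^{-2/\gamma - 2/\gamma_1'}\,\|\Box_k f\|_{L^{\gamma_1}_t L^2_x}.
\]
Weighting by $2^{s|k|}\langle k\rangle^{\sigma+2/\gamma}$ and $\ell^2$-summing in $k$ then produces exactly the right-hand-side norm $\|f\|_{\widetilde{L}^{\gamma_1}E^{s,\sigma-2/\gamma_1'}_{2,2}}$.

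The only non-routine point is Step 1, namely noting that the first-octant condition (with the separation from $0$ built into $\mathbb{R}^d_{I,+}$) upgrades the heat decay to a genuine factor $e^{-ct\langle k\rangle^2}$ \emph{uniformly} across the cell $k+[0,1)^d$, even for low-frequency packets. After that, the weight $2^{s|k|}$ plays no role in these linear bounds, and Steps 2–3 are standard applications of Plancherel, Minkowski and Young's inequality; I do not anticipate any further obstacle.
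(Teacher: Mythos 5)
Your proof is correct and follows essentially the same route as the paper's: Plancherel on each frequency cell to get the pointwise decay $e^{-ct\langle k\rangle^{2}}$ from the first-octant support condition, then the $L^\gamma_t$ norm of that exponential for the free term and Young's convolution inequality in $t$ for the Duhamel term, followed by the weighted $\ell^2$ summation in $k$. The only (harmless) difference is that you make explicit the geometric observation that $\mathbb{R}^d_{I,+}$ forces $k\in\mathbb{Z}^d_I\setminus\{0\}$ and $|\xi|^2\gtrsim\langle k\rangle^2$ uniformly on each cell, which the paper leaves implicit.
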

	\begin{proof}
By Plancherel's identity and {\rm supp}$\, \widehat{u}_0 \subset \mathbb{R}^d_{I,+}$, we have
\begin{align} \label{L2estimate}
\|\Box_k e^{t\Delta}u_0\|_2 \leq e^{- t|k|^2} \|\Box_k u_0\|_2
\end{align}	
for all $k\in \mathbb{Z}^d_I$. Hence, in view of \eqref{L2estimate} and H\"older's inequality,
\begin{align}
 \|\Box_k e^{t\Delta}u_0\|_{L^\gamma_t L_x^2 (\mathbb{R}_+ \times \mathbb{R}^d)  }
&\lesssim  |k|^{-2/\gamma}  \|\Box_k u_0\|_{2}, \ \ k\in \mathbb{R}^d_{I,+}\cap \mathbb{Z}^d_I. \label{L2estimate2}
\end{align}
\eqref{L2estimate2} is multiplied by $2^{s|k|} \langle k\rangle^{2/\gamma +\sigma}$ and then taken $\ell^2$-norm on $k\in \mathbb{Z}^d$, we have \eqref{basicest1}.  For the inhomogenous part, by \eqref{L2estimate} and Young's inequality, we have
\begin{align}
  \left\|\Box_k \int_0^t e^{(t-\tau)\Delta}f(\tau)~d\tau\right\|_{L^\gamma_tL^2_x (\mathbb{R}_+ \times \mathbb{R}^d) }
&\lesssim  \left\|\int_0^t e^{-(t-\tau)|k|^2}\|\Box_k f(\tau)\|_{L^2}d\tau\right\|_{L^\gamma_{t }(\mathbb{R}_+ )}  \nonumber\\
&\lesssim   |k|^{-2/\gamma-2/\gamma'_1}\|\Box_k f \|_{L^{\gamma_1}_t L_x^2 (\mathbb{R}_+ \times \mathbb{R}^d) }.  \label{basicest3}		
\end{align}
\eqref{basicest3} is multiplied by $2^{s|k|} \langle k\rangle^{2/\gamma +\sigma}$ and then taken $\ell^2$-norm on $k\in \mathbb{Z}^d$, we have \eqref{basicest2}.
	\end{proof}

\begin{cor}\label{linearestimateapp}
Let $d\geq 1$, $s\leq 0$, $\sigma \in \mathbb{R}$, $1\leq \gamma<\infty$.  Suppose that $\mathrm{supp}\mathscr{F}_x f(t), \, {\rm supp}\, \widehat{u}_0\subset \mathbb{R}^d_{I,+}$. Then we have
		\begin{align}
			\|e^{t\Delta}u_0\|_{ \widetilde{L} ^\gamma (0,\infty; E^{s,\sigma+2/\gamma}_{2,2})  \cap \widetilde{L} ^\infty (0,\infty; E^{s,\sigma}_{2,2})} & \lesssim \|u_0\|_{E^s_\sigma},  \label{corbasicest1} \\
			\left\|\int_0^t e^{(t-\tau)\Delta}f(\tau)~d\tau\right\|_{\widetilde{L} ^\gamma (0,\infty; E^{s,\sigma+2/\gamma}_{2,2})  \cap \widetilde{L} ^\infty (0,\infty; E^{s,\sigma}_{2,2}) } &\lesssim \|f\|_{\widetilde{L} ^1 (0,\infty; E^{s,\sigma}_{2,2}) }.  \label{corbasicest2}
		\end{align}
	\end{cor}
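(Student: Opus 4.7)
The plan is to deduce Corollary \ref{linearestimateapp} directly from Lemma \ref{linearestimate}, supplementing only the $\widetilde{L}^\infty$ endpoint which the lemma does not cover. The $\widetilde{L}^\gamma$-parts of \eqref{corbasicest1} and \eqref{corbasicest2} (for $\gamma<\infty$) are immediate: the first is exactly \eqref{basicest1}, and the second is \eqref{basicest2} specialized to $\gamma_1=1$, for which $\gamma_1'=\infty$ and therefore the weight $|k|^{-2/\gamma-2/\gamma_1'}$ becomes $|k|^{-2/\gamma}$. Note that on $\mathrm{supp}\,\widehat{u}_0\subset \mathbb{R}^d_{I,+}$ the relevant frequencies satisfy $|k|\gtrsim 1$, so $|k|$ and $\langle k\rangle$ are comparable; this already appears in the proof of the lemma.

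For the $\widetilde{L}^\infty$ part of \eqref{corbasicest1}, the estimate \eqref{L2estimate} from the proof of Lemma \ref{linearestimate} gives $\|\Box_k e^{t\Delta}u_0\|_{L^2_x}\leq e^{-t|k|^2}\|\Box_k u_0\|_{L^2_x}\leq \|\Box_k u_0\|_{L^2_x}$ for all $k\in \mathbb{Z}^d_I$ with $k\neq 0$; taking the supremum in $t$, multiplying by $2^{s|k|}\langle k\rangle^\sigma$, and taking the $\ell^2$-norm in $k$ yields the claim.

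For the $\widetilde{L}^\infty$ part of \eqref{corbasicest2}, the same pointwise decay together with the triangle inequality gives
\begin{align*}
\left\|\Box_k\int_0^t e^{(t-\tau)\Delta}f(\tau)\,d\tau\right\|_{L^2_x}
\leq \int_0^t e^{-(t-\tau)|k|^2}\|\Box_k f(\tau)\|_{L^2_x}\,d\tau
\leq \|\Box_k f\|_{L^1_t L^2_x},
\end{align*}
uniformly in $t\geq 0$. Multiplying by $2^{s|k|}\langle k\rangle^\sigma$ and summing over $k\in \mathbb{Z}^d_{I,+}\cap \mathbb{Z}^d$ in $\ell^2$ closes the estimate.

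There is essentially no obstacle: the only item deserving attention is consistency of the weights $|k|^{-2/\gamma}\langle k\rangle^{2/\gamma}\sim 1$ on the relevant support, which is the same observation already exploited in Lemma \ref{linearestimate}. Combining the two pieces via the triangle inequality in the norm $\widetilde{L}^\gamma(\cdots)\cap \widetilde{L}^\infty(\cdots)$ finishes the corollary.
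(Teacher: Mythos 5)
Your argument is correct and matches the paper, which states this corollary without separate proof as a direct specialization of Lemma \ref{linearestimate} (take $\gamma_1=1$, so $2/\gamma_1'=0$, for the chosen $\gamma$ and for $\gamma=\infty$). Your only inaccuracy is the claim that the lemma ``does not cover'' the $\widetilde{L}^\infty$ endpoint --- its hypothesis $1\leq\gamma_1\leq\gamma\leq\infty$ already includes $\gamma=\infty$ --- but the direct endpoint bounds you supply are exactly what the lemma's proof gives in that case, so nothing is lost.
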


\begin{lemma}\label{interlinearestimate}
Let $d\geq 1$, $s\leq 0$, $\sigma \in \mathbb{R}$, $1\leq \gamma<\infty$.    Then we have $\widetilde{L} ^1 (0,\infty; E^{s,\sigma+2}_{2,2})  \cap \widetilde{L} ^\infty (0,\infty; E^{s,\sigma}_{2,2})    \subset \widetilde{L} ^\gamma (0,\infty; E^{s,\sigma+2/\gamma}_{2,2})$ and
\begin{align}
\|u\|_{\widetilde{L} ^\gamma (0,\infty; E^{s,\sigma+2/\gamma}_{2,2})}	\leq  \|u\|^{1/\gamma}_{\widetilde{L} ^1 (0,\infty; E^{s,\sigma+2 }_{2,2})}		\|u\|^{1-1/\gamma}_{\widetilde{L} ^\infty(0,\infty; E^{s,\sigma}_{2,2})}. 	
			\label{intercorbasicest2}
\end{align}
	\end{lemma}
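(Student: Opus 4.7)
The plan is to reduce the inequality to two successive applications of Hölder's inequality: one in the time variable at each fixed frequency block $\Box_k u$, and then one on the sequence in $k$.

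First I would fix $k \in \mathbb{Z}^d$ and apply the elementary interpolation inequality in time
\[
\|\Box_k u\|_{L^\gamma_t L^2_x} \leq \|\Box_k u\|^{1/\gamma}_{L^1_t L^2_x}\, \|\Box_k u\|^{1-1/\gamma}_{L^\infty_t L^2_x},
\]
which follows from writing $|g(t)|^\gamma = |g(t)|\cdot |g(t)|^{\gamma-1}$ and bounding the second factor by $\|g\|^{\gamma-1}_{L^\infty_t}$ (here $g(t) = \|\Box_k u(t)\|_{L^2_x}$). Squaring, this gives
\[
\|\Box_k u\|^2_{L^\gamma_t L^2_x} \leq \|\Box_k u\|^{2/\gamma}_{L^1_t L^2_x}\, \|\Box_k u\|^{2(1-1/\gamma)}_{L^\infty_t L^2_x}.
\]

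Next I would distribute the weight $2^{2s|k|}\langle k\rangle^{2(\sigma+2/\gamma)}$ appropriately. Writing
\[
2^{2s|k|}=\bigl(2^{2s|k|}\bigr)^{1/\gamma}\bigl(2^{2s|k|}\bigr)^{1-1/\gamma},\qquad \langle k\rangle^{2(\sigma+2/\gamma)}=\langle k\rangle^{2(\sigma+2)/\gamma}\,\langle k\rangle^{2\sigma(1-1/\gamma)},
\]
the pointwise (in $k$) estimate becomes
\[
2^{2s|k|}\langle k\rangle^{2(\sigma+2/\gamma)}\|\Box_k u\|^2_{L^\gamma_t L^2_x}
\leq a_k^{1/\gamma}\, b_k^{1-1/\gamma},
\]
with $a_k:=2^{2s|k|}\langle k\rangle^{2(\sigma+2)}\|\Box_k u\|^2_{L^1_t L^2_x}$ and $b_k:=2^{2s|k|}\langle k\rangle^{2\sigma}\|\Box_k u\|^2_{L^\infty_t L^2_x}$.

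Finally I would sum over $k\in\mathbb{Z}^d$ and apply Hölder's inequality for sequences with exponents $\gamma$ and $\gamma/(\gamma-1)$:
\[
\sum_k a_k^{1/\gamma} b_k^{1-1/\gamma} \leq \Bigl(\sum_k a_k\Bigr)^{1/\gamma}\Bigl(\sum_k b_k\Bigr)^{1-1/\gamma}=\|u\|^{2/\gamma}_{\widetilde{L}^1(0,\infty; E^{s,\sigma+2}_{2,2})}\|u\|^{2(1-1/\gamma)}_{\widetilde{L}^\infty(0,\infty; E^{s,\sigma}_{2,2})}.
\]
Taking the square root yields \eqref{intercorbasicest2}; the embedding statement is the immediate consequence that the left-hand side is finite whenever both factors on the right-hand side are. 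There is no real obstacle here — the statement is a routine bilinear interpolation, and the only bookkeeping to watch is the exact splitting of the weights $2^{2s|k|}$ and $\langle k\rangle^{2(\sigma+2/\gamma)}$ so that the two Hölder applications match up cleanly.
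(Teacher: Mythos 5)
Your proof is correct and follows exactly the route the paper intends: the paper disposes of this lemma with the single line ``It is a consequence of H\"older's inequality,'' and your argument (time interpolation $\|g\|_{L^\gamma}\leq\|g\|_{L^1}^{1/\gamma}\|g\|_{L^\infty}^{1-1/\gamma}$ blockwise, the exponent bookkeeping $2(\sigma+2)/\gamma+2\sigma(1-1/\gamma)=2(\sigma+2/\gamma)$, then H\"older on the sum over $k$) is precisely the computation being suppressed. Nothing to add.
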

\begin{proof}
It is a consequence of H\"older's inequality.
\end{proof}

\begin{lemma}\label{nonlinearestimate}
Let $ m \geq 2$, $s\leq 0$, $\sigma \geq d/2 -2/(m-1)$. Suppose that $\mathrm{supp} \mathscr{F}_x u_j (t) \subset \mathbb{R}^d_{I}, ~1\leq j\leq m, \ t\geq 0$. Then we have, for some $C_{m}>0$,
\begin{align}
\|u_1u_2\cdots u_m\|_{\widetilde{L} ^1 (0,\infty; E^{s,\sigma}_{2,2}) }\leq C_{m} \prod^m_{j=1} \|u_j\|_{ \widetilde{L} ^m (0,\infty; E^{s,\sigma+2/m}_{2,2}) }, \label{1nonest}
\end{align}
where $C_{m} \leq C^m m^{m/2}$ if $\sigma \geq d/2$, $C$ depends only on $s,d,\sigma$.
\end{lemma}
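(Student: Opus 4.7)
The plan is to reduce the bound to a pointwise-in-$t$ multilinear estimate in $E^{s,\sigma}_{2,2}$, then establish that estimate via a Schur-type Cauchy--Schwarz argument on the frequency-uniform decomposition, using the first-octant Fourier support in an essential way. First I would apply Minkowski's inequality $\|\cdot\|_{\ell^2_k L^1_t}\le \|\cdot\|_{L^1_t \ell^2_k}$ to pass from $\widetilde{L}^1(E^{s,\sigma}_{2,2})$ to $L^1_t(E^{s,\sigma}_{2,2})$; then H\"older in $t$ with $m$ factors together with the reverse Minkowski $\|\cdot\|_{L^m_t\ell^2_k}\le \|\cdot\|_{\ell^2_k L^m_t}$ (valid for $m\ge 2$) show it suffices to prove, for a.e.\ $t$,
\[
\|u_1(t)\cdots u_m(t)\|_{E^{s,\sigma}_{2,2}} \lesssim_m \prod_{j=1}^m \|u_j(t)\|_{E^{s,\sigma+2/m}_{2,2}},
\]
under the assumption that each $\widehat{u_j(t)}$ is supported in $\mathbb{R}^d_I$.

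To prove the pointwise estimate I would expand $\Box_k(u_1\cdots u_m)$ through the frequency-uniform decomposition and use \eqref{orthog} together with the Fourier-support hypothesis to restrict the sum to $(k_1,\dots,k_m)\in(\mathbb{Z}^d_I)^m$ with $k_1+\cdots+k_m=k+O(1)$. H\"older in $x$ combined with Bernstein's inequality \eqref{LqLpest} yields $\|\Box_{k_1}u_1\cdots\Box_{k_m}u_m\|_{L^2_x}\lesssim\prod_j\|\Box_{k_j}u_j\|_{L^2_x}$, and the first-octant identity $|k|=|k_1|+\cdots+|k_m|$ (for the $\ell^1$-type norm of the paper) produces $2^{s|k|}\lesssim_{s,d,m}\prod_j 2^{s|k_j|}$ since $s\le 0$. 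Setting $V_j(k):=2^{s|k|}\langle k\rangle^{\sigma+2/m}\|\Box_k u_j\|_{L^2_x}$ so that $\|V_j\|_{\ell^2_k}=\|u_j\|_{E^{s,\sigma+2/m}_{2,2}}$, a Cauchy--Schwarz applied to the inner sum in $(k_1,\dots,k_m)$, together with the Fubini bound $\sum_k\sum_{\sum k_j=k+O(1)}\prod_j V_j(k_j)^2 \lesssim_m \prod_j\|V_j\|^2_{\ell^2}$, reduces everything to verifying
\[
\sup_{k\in\mathbb{Z}^d_I} \langle k\rangle^{2\sigma} A_m(k) < \infty,
\qquad
A_m(k):=\sum_{\substack{k_1+\cdots+k_m=k+O(1)\\ k_j\in\mathbb{Z}^d_I}} \prod_{j=1}^m \langle k_j\rangle^{-2\sigma-4/m}.
\]

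The main obstacle is checking this supremum precisely at the scaling-critical endpoint $\sigma=d/2-2/(m-1)$. A rescaling $k_j=|k|\eta_j$ on the $(m-1)d$-dimensional simplex $\{k_j\in\mathbb{R}^d_I,\ \sum_j k_j\approx k\}$ gives $A_m(k)\sim|k|^{(m-1)d-m(2\sigma+4/m)}$, hence $\langle k\rangle^{2\sigma}A_m(k)\sim|k|^{(m-1)(d-2\sigma)-4}$, which is bounded if and only if $\sigma\ge d/2-2/(m-1)$, matching the hypothesis exactly. At equality the scaling exponent vanishes, and the supremum is still finite because $2\sigma+4/m=d-4/(m(m-1))<d$ at the critical value, keeping the rescaled simplex integrand $\prod_j|\eta_j|^{-2\sigma-4/m}$ integrable at each vertex so no logarithmic loss appears. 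For the easier subrange $\sigma\ge d/2$ I would give a separate argument: since $\sigma+2/m>d/2$, one has $H^{\sigma+2/m}\hookrightarrow L^\infty$ and $E^{s,\sigma+2/m}_{2,2}$ inherits an algebra-type estimate (the exponential weight $2^{s|\xi|}$ is compatible thanks to the first-octant identity for $|\cdot|$); iterating a bilinear algebra bound $m-1$ times and tracking a combinatorial factor of order $m^{m/2}$ from H\"older in $x$ produces the sharper explicit constant $C_m\le C^m m^{m/2}$ asserted in the statement.
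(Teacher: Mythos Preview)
Your reduction to a time-pointwise multilinear estimate via the two Minkowski inequalities is valid and genuinely streamlines the argument; the paper instead carries the mixed $L^p_tL^2_x$ norms throughout and applies H\"older in $t$ only after frequency localization. Your symmetric Schur--Cauchy--Schwarz setup is also sound in structure; the paper's version first splits the convolution sum according to which $|k_i|$ is maximal, and that splitting is not merely cosmetic: it is what allows a uniform treatment of all $\sigma\ge d/2-2/(m-1)$ and the explicit constant.

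The gap in your argument lies in the verification of $\sup_k\langle k\rangle^{2\sigma}A_m(k)<\infty$. The rescaling $k_j=|k|\eta_j$ only yields $A_m(k)\sim|k|^{(m-1)d-m(2\sigma+4/m)}\cdot I$ with a finite constant $I$ when the simplex integrand $\prod_j|\eta_j|^{-2\sigma-4/m}$ is integrable at each vertex, i.e.\ when $2\sigma+4/m<d$, equivalently $\sigma<d/2-2/m$. For $\sigma\ge d/2-2/m$ that continuous integral diverges; the discrete $A_m(k)$ is then dominated by the vertex contributions ($k_i\approx k$, the remaining $k_j=O(1)$), giving instead $A_m(k)\asymp m\langle k\rangle^{-2\sigma-4/m}$, and it is precisely this behaviour---captured in the paper by the max-frequency splitting, cf.\ \eqref{const3}--\eqref{const5}---that makes $\langle k\rangle^{2\sigma}A_m(k)$ bounded. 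Your separate algebra argument is stated only for $\sigma\ge d/2$, so the range $d/2-2/m\le\sigma<d/2$ is left uncovered. Finally, the claimed bound $C_m\le C^m m^{m/2}$ for $\sigma\ge d/2$ does not arise from iterating a bilinear algebra estimate; in the paper it comes from the near-critical sum $\sum_{k_j\in\mathbb{Z}^d_I}\langle k_j\rangle^{-d-4/m}\sim Cm$ at $\sigma=d/2$, appearing to the power $(m-1)/2$ after Cauchy--Schwarz.
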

\begin{proof}
By definition and the support set property of $\mathscr{F}u_j (t)$, we have
\begin{align}
\|u_1u_2\cdots u_m\|_{\widetilde{L} ^1 (0,\infty; E^{s,\sigma}_{2,2}) } = \left( \sum_{k\in \mathbb{Z}^d_I } \langle k\rangle^{2\sigma} 2^{2s|k|}  \|\Box_k(u_1u_2...u_m)\|^2_{L^1_t L^2_x (\mathbb{R}_+ \times \mathbb{R}^d) } \right)^{1/2}. \label{1nonest}
\end{align}
Using the definition of $\Box_k$, we see that
\begin{align*}
 \|\Box_k(u_1u_2\cdots u_m)\|_{L_t^1L_x^2}
			&\leq \sum_{k_1,...,k_m \in \mathbb{Z}^d_I }   \|\Box_k (\Box_{k_1}u_1\Box_{k_2}u_2\cdots\Box_{k_m} u_m)\|_{L_{t}^1L_x^2}.
\end{align*}
By the orthogonality of $\Box_k$,
$$
\Box_k (\Box_{k_1}u_1\Box_{k_2}u_2\cdots\Box_{k_m} u_m) \neq 0,
$$
implies that $(k_1,...,k_m)$ belongs to
$$
\Lambda_k (k_1,...,k_m) : = \left\{(k_1,...,k_m): \,  -1 \leq k(j) - k_1(j)-...-k_m(j) \leq m, \ \ j=1,...,d \right\}.
$$
It follows that
\begin{align}
 \|\Box_k(u_1u_2\cdots u_m)\|_{L_t^1L_x^2}
			&\leq \sum_{k_1,...,k_m \in \mathbb{Z}^d_I }   \|\Box_{k_1}u_1\Box_{k_2}u_2\cdots\Box_{k_m} u_m \|_{L_{t}^1L_x^2} \chi_{\Lambda_k (k_1,...,k_m)}. \label{3nonest}
\end{align}
Let us further write
$$
(\mathbb{Z}^d_{I})^{m,i} : = \left\{(k_1,...,k_m) \in (\mathbb{Z}^d_{I})^m : \, |k_i| = \max_{1\leq j \leq m} |k_j | \right\}.
$$
One has that
\begin{align}
 \|\Box_k(u_1u_2\cdots u_m)\|_{L_t^1L_x^2}
			&\leq \sum^m_{i=1}  \sum_{(k_1,...,k_m) \in (\mathbb{Z}^d_{I})^{m,i} }   \|\Box_{k_1}u_1\Box_{k_2}u_2\cdots\Box_{k_m} u_m \|_{L_{t}^1L_x^2} \chi_{\Lambda_k (k_1,...,k_m)} \nonumber\\
& : = \sum^m_{i=1} L_i.  \label{4nonest}
\end{align}
By symmetry, it suffices to estimate $L_m$. By H\"older's inequality and $\|\Box_k u\|_\infty \leq \|\Box_k u\|_2$,
\begin{align}
 L_m & \leq    \sum_{(k_1,...,k_m) \in (\mathbb{Z}^d_{I})^{m,m} }  \prod^{m}_{j=1}  \|\Box_{k_j}u_j\|_{L^m_{t}L^\infty_x} \|\Box_{k_m} u_m \|_{L_{t}^m L_x^2} \chi_{\Lambda_k (k_1,...,k_m)} \nonumber\\
&  \leq    \sum_{(k_1,...,k_m) \in (\mathbb{Z}^d_{I})^{m,m} }  \prod^{m }_{j=1}  \|\Box_{k_j}u_j\|_{L^m_{t}L^2_x}   \chi_{\Lambda_k (k_1,...,k_m)}.   \label{5nonest}
\end{align}
Let us  write
$$
(\mathbb{Z}^d_{I})^{m-1}_{r}  : = \left\{(k_1,...,k_{m-1}) \in (\mathbb{Z}^d_{I})^{m-1}  : \, |k_1|,...,|k_{m-1}| \leq  r \right\}.
$$
Then for $l_m =k-k_1-...-k_{m-1}$, $\mathbb{A} = \{l\in \mathbb{Z}^d:\ -1 \leq l(j) \leq m, \, j=1,...,d\}$,
\begin{align}
 L_m
&  \leq   \sum_{l\in \mathbb{A} }  \sum_{(k_1,...,k_{m-1}) \in (\mathbb{Z}^d_{I})^{m-1}_{|l_m-l|} }  \prod^{m-1}_{j=1}  \|\Box_{k_j}u_j\|_{L^m_{t}L^2_x}  \|\Box_{l_m-l}u_m\|_{L^m_{t}L^2_x}\chi_{\{l_m-l\in \mathbb{Z}^d_I\}}   .   \label{5anonest}
\end{align}
Using the Cauchy-Schwarz inequality, one has that
\begin{align}
 L_m
\leq &  \sum_{l\in \mathbb{A} } \left( \sum_{(k_1,...,k_{m-1}) \in (\mathbb{Z}^d_{I})^{m-1}_{|l_m-l|} }  \prod^{m-1}_{j=1} \langle k_j\rangle^{2(\sigma+2/m)} 2^{2s|k_j|}  \|\Box_{k_j}u_j\|^2_{L^m_{t}L^2_x} \right)^{1/2} \nonumber\\
&  \times   \left( \sum_{(k_1,...,k_{m-1}) \in (\mathbb{Z}^d_{I})^{m-1}_{|l_m-l|} }  \prod^{m-1}_{j=1} \langle k_j\rangle^{-2(\sigma+2/m)} 2^{-2s|k_j|}
 \|\Box_{l_m-l} u_m \|^2_{L_{t}^m L_x^2} \chi_{\{l_m-l\in \mathbb{Z}^d_I\}}   \right)^{1/2} \nonumber\\
 \lesssim  &  \  2^{|s|dm} 2^{-s|k|}  \prod^{m-1}_{j=1}  \|u_j\|_{\widetilde{L} ^m (0,\infty; E^{s,\sigma+2/m}_{2,2}) } \nonumber\\
&  \times \sum_{l\in \mathbb{A} }  \left( \sum_{(k_1,...,k_{m-1}) \in (\mathbb{Z}^d_{I})^{m-1}_{|l_m-l|} }  \prod^{m-1}_{j=1} \langle k_j\rangle^{-2(\sigma+2/m)} \left(2^{s|l_m-l|} \|\Box_{l_m-l} u_m \|_{L_{t}^m L_x^2} \right)^2 \right)^{1/2}.
 \label{6nonest}
\end{align}
By \eqref{6nonest}, we have for $l_m =k-k_1-...-k_{m-1}$,
\begin{align}
&  \left( \sum_{k\in \mathbb{Z}^d_I} \langle k \rangle^{2\sigma} 2^{2s|k|}  L_m^2 \right)^{1/2} \nonumber\\
 &  \lesssim \  2^{|s|dm}  \sum_{l\in \mathbb{A} } \left(  \sum_{k\in \mathbb{Z}^d_I} \langle k \rangle^{2\sigma}  \sum_{ (k_1,...,k_{m-1}) \in (\mathbb{Z}^d_{I})^{m-1}_{|l_m-l|}  }  \prod^{m-1}_{j=1} \langle k_j\rangle^{-2(\sigma+2/m)}
  2^{2 s|l_m-l|} \|\Box_{l_m-l} u_m \|_{L_{t}^m L_x^2}^2  \right)^{1/2}  \nonumber\\
&  \quad\quad \times \prod^{m-1}_{j=1}  \|u_j\|_{\widetilde{L} ^m (0,\infty; E^{s,\sigma+2/m}_{2,2}) }  .   \label{7nonest}
\end{align}
Noticing that $|l|\leq dm$  for $l\in \mathbb{A}$, we have
\begin{align}
  \sum_{k\in \mathbb{Z}^d_I} & \langle k \rangle^{2\sigma}  \sum_{ (k_1,...,k_{m-1}) \in (\mathbb{Z}^d_{I})^{m-1}_{|l_m-l|}  }  \prod^{m-1}_{j=1} \langle k_j\rangle^{-2(\sigma+2/m)}
  2^{2 s|l_m-l|} \|\Box_{l_m-l} u_m \|_{L_{t}^m L_x^2}^2   \nonumber\\
 & \leq  \sum_{k_m \in \mathbb{Z}^d_I}  \sum_{(k_1,...,k_{m-1}) \in (\mathbb{Z}^d_{I})^{m-1}_{|k_m|}  } \left\langle \sum^m_{j=1} k_j+l \right\rangle^{2\sigma} \prod^{m-1}_{j=1} \langle k_j\rangle^{-2(\sigma+2/m)}  \left( 2^{s|k_m|} \|\Box_{k_m} u_m \|_{L_{t}^m L_x^2}\right)^2    \nonumber\\
& \lesssim m^{2|\sigma|} \sum_{k_m \in \mathbb{Z}^d_I}  \sum_{(k_1,...,k_{m-1}) \in (\mathbb{Z}^d_{I})^{m-1}_{|k_m|}  }  \prod^{m-1}_{j=1} \langle k_j\rangle^{-2(\sigma+2/m)} \langle k_m \rangle^{-4/m}\nonumber\\
 & \quad\quad\quad \quad\quad\quad \quad\quad\quad \quad\quad\quad  \times \left(\langle k_m \rangle^{\sigma +2/m} 2^{s|k_m|} \|\Box_{k_m} u_m \|_{L_{t}^m L_x^2}\right)^2.     \label{8nonest}
\end{align}
If $d/2-2/(m-1) \leq \sigma <d/2 -2/m$, then we easily see that, for $\sigma_c=d/2-2/(m-1)$,
\begin{align}
\sum_{ (k_1,...,k_{m-1}) \in (\mathbb{Z}^d_{I})^{m-1}_{|k_m|} } \prod^{m-1}_{j=1} \langle k_j\rangle^{-2(\sigma+2/m)} \langle  k_m \rangle^{-4/m} & \leq \frac{C^{m-1}}{(d-2(\sigma+2/m))^{m-1}} \langle  k_m \rangle^{(m-1)(d-2\sigma_c)-4}  \nonumber\\
 & \leq \frac{C^{m-1}}{|d-2(\sigma+2/m)|^{m-1}} .  \label{const1}
\end{align}
If $ \sigma =d/2 -2/m$, we have
\begin{align}
\sum_{ (k_1,...,k_{m-1}) \in (\mathbb{Z}^d_{I})^{m-1}_{|k_m|}  } \prod^{m-1}_{j=1} \langle k_j\rangle^{-2(\sigma+2/m)} \langle  k_m \rangle^{-4/m} & \leq \sum_{(k_1,...,k_{m-1}) \in (\mathbb{Z}^d_{I})^{m-1}_{|k_m|}  } \prod^{m-1}_{j=1} \langle k_j\rangle^{-d-4/m(m-1)}  \nonumber \\
 & \leq C^{m-1} (m(m-1))^{m-1}. \label{const2}
\end{align}
If $d/2 -2/m < \sigma  <d/2$, we have
\begin{align}
\sum_{ (k_1,...,k_{m-1}) \in (\mathbb{Z}^d_{I})^{m-1}_{|k_m|} } \prod^{m-1}_{j=1} \langle k_j\rangle^{-2(\sigma+2/m)} \langle  k_m \rangle^{-4/m} & \leq
 \frac{C^{m-1}}{|d-2(\sigma+2/m)|^{m-1}}. \label{const3}
\end{align}
If $\sigma=d/2$, then we have
\begin{align}
\sum_{ (k_1,...,k_{m-1}) \in (\mathbb{Z}^d_{I})^{m-1}_{|k_m|} } \prod^{m-1}_{j=1} \langle k_j\rangle^{-2(\sigma+2/m)} \langle  k_m \rangle^{-4/m} & \leq
(C m)^{m-1}. \label{const4}
\end{align}
If $\sigma> d/2$, then we have
\begin{align}
\sum_{(k_1,...,k_{m-1}) \in (\mathbb{Z}^d_{I})^{m-1}_{|k_m|} } \prod^{m-1}_{j=1} \langle k_j\rangle^{-2(\sigma+2/m)} \langle  k_m \rangle^{-4/m} & \leq
C^m. \label{const5}
\end{align}
So, \eqref{8nonest} has a bound
\begin{align}
 \sum_{k\in \mathbb{Z}^d_I} & \langle k \rangle^{2\sigma}  \sum_{ (k_1,...,k_{m-1}) \in (\mathbb{Z}^d_{I})^{m-1}_{|l_m-l|}  }  \prod^{m-1}_{j=1} \langle k_j\rangle^{-2(\sigma+2/m)}
  2^{2 s|l_m-l|} \|\Box_{l_m-l} u_m \|_{L_{t}^m L_x^2}^2  \nonumber\\
  &\lesssim m^{2|\sigma|} C^m A_m  \|u_m \|^2_{\widetilde{L} ^m (0,\infty; E^{s,\sigma+2/m}_{2,2})},       \label{9nonest}
\end{align}
where $A_m$ is the number in the right hand sides of \eqref{const1}--\eqref{const5} corresponding to different  $\sigma\geq d/2-2/(m-1)$.  Inserting \eqref{9nonest} into \eqref{7nonest} and noticing that $m^{2|\sigma|} \leq C^m$, $\mathbb{A}$ has at most $d^m$ many indices, one has that for $C_m = C^m A_m^{1/2}$,
\begin{align}
  \left(\sum_{k\in \mathbb{Z}^d_I} \langle k \rangle^{2\sigma} 2^{2s|k|}  L_m^2 \right)^{1/2}
 \leq C_{ m }  \prod^{m}_{j=1}  \|u_j\|_{\widetilde{L} ^m (0,\infty; E^{s,\sigma+2/m}_{2,2}) } .   \label{10nonest}
\end{align}
In an analogous way to \eqref{10nonest}, one can estimate
\begin{align}
   \left( \sum_{k\in \mathbb{Z}^d_I} \langle k \rangle^{2\sigma} 2^{2s|k|}  L_i^2 \right)^{1/2}
 \leq  C_{  m } \prod^{m}_{j=1 }  \|u_j\|_{\widetilde{L} ^m (0,\infty; E^{s,\sigma+2/m}_{2,2}) }, \ \ i=1,...,m-1  .   \label{11nonest}
\end{align}
By \eqref{const4}, \eqref{const5}, we see that $C_m \leq C^m m^{m/2}$ if $\sigma \geq d/2$.  Combining \eqref{1nonest}, \eqref{4nonest}, \eqref{10nonest} and \eqref{11nonest},  we obtain the result, as desired.
\end{proof}

\begin{rem} \label{appendnonlinearest}
From the proof of Lemma \ref{nonlinearestimate}, we see that if we remove the lower frequency part of $u_1...u_m$, we can obtain that
\begin{align}
  \|u_1u_2\cdots u_m &  -\Box_0 u_1\, \Box_0 u_2...\Box_0 u_m \|_{\widetilde{L} ^1 (0,\infty; E^{s,\sigma}_{2,2}(\mathbb{Z}^d\setminus \{0\})) } \nonumber\\
  & \leq C_m \sum^m_{i=1} \|u_i\|_{ \widetilde{L} ^1 (0,\infty; E^{s,\sigma+2}_{2,2}(\mathbb{Z}^d\setminus \{0\}) ) } \prod^m_{j=1, \,j\neq i} \|u_j\|_{  \widetilde{L} ^\infty (0,\infty; E^{s,\sigma}_{2,2}) }, \label{wl1nonest}
\end{align}
where the constant $C_m =C^{m} m^{m/2}$ for $\sigma =d/2$, and $C_m =C^m $ for $\sigma \neq d/2$.
Indeed,  we write
 $$
 (\mathbb{Z}^d_{I})^m_+ = \{(k_1,...,k_m)\in (\mathbb{Z}^d_{I})^m: \, \max_{1\leq j\leq m} |k_j|_\infty \geq 1 \}.
 $$
 By the definition, we have
\begin{align}
\|u_1 & u_2\cdots u_m - \Box_0 u_1\, \Box_0 u_2...\Box_0 u_m \|_{\widetilde{L} ^1 (0,\infty; E^{s,\sigma}_{2,2}(\mathbb{Z}^d_I \setminus \{0\})) }  \nonumber\\
& = \left( \sum_{k\in \mathbb{Z}^d_I\setminus \{0\} } \langle k\rangle^{2\sigma} 2^{2s|k|}  \|\Box_k(u_1u_2...u_m- \Box_0 u_1\, \Box_0 u_2...\Box_0 u_m )\|^2_{L^1_t L^2_x (\mathbb{R}_+ \times \mathbb{R}^d) } \right)^{1/2}. \label{w1nonest}
\end{align}
Observing that
\begin{align}
   u_1u_2\cdots u_m &  -\Box_0 u_1\, \Box_0 u_2...\Box_0 u_m  = \sum_{(k_1,...,k_m) \in (\mathbb{Z}^d_{I})^m_+ } \Box_{k_1} u_1\, \Box_{k_2} u_2...\Box_{k_m} u_m,  \label{wl1nonest2}
\end{align}
we have
\begin{align}
 \|\Box_k & (u_1u_2\cdots u_m -\Box_0 u_1 \Box_0 u_2 ...\Box_0 u_m )\|_{L_t^1L_x^2} \nonumber\\
			&\leq \sum^m_{i=1}  \sum_{(k_1,...,k_m) \in (\mathbb{Z}^d_{I})^{m,i}_{+} }   \|\Box_{k_1}u_1\Box_{k_2}u_2\cdots\Box_{k_m} u_m \|_{L_{t}^1L_x^2} \chi_{\Lambda_k (k_1,...,k_m)} \nonumber\\
& : = \sum^m_{i=1} L_i.  \label{w4nonest}
\end{align}
where $(\mathbb{Z}^d_{I})^{m,i}_{+} =\{ (k_1,...,k_m) \in (\mathbb{Z}^d_{I})^m:\, |k_i|_\infty = \max_{1\leq j\leq m} |k_j|_\infty  \geq 1   \}$.
One can estimate $L_m$ by
\begin{align}
 L_m
&  \leq    \sum_{(k_1,...,k_m) \in (\mathbb{Z}^d_{I})^{m,m}_{+} }  \prod^{m-1}_{j=1}  \|\Box_{k_j}u_j\|_{L^\infty_{t}L^2_x}  \|\Box_{k_m}u_m\|_{L^1_{t}L^2_x}  \chi_{\Lambda_k (k_1,...,k_m)},   \label{w5nonest}
\end{align}
In this way we have removed the lower frequency part of $u_m$ in the estimate of $L_m$. Using a similar way as in the proof of Lemma \ref{nonlinearestimate}, we can get \eqref{wl1nonest}.
\end{rem}

\begin{lemma} \label{scaling}
		Assume $s\leq 0$, $\varphi \in E^s_\sigma (\mathbb{R}^d)$.  Write  $ D_\lambda: \, \varphi \to \varphi_\lambda (\cdot) = \varphi(\lambda\cdot)$. Assume that ${\rm supp }\, \varphi \subset \{\xi: |\xi|\geq \varepsilon_0\}$ for some $\varepsilon_0>0$. Then  for any $\lambda> 1$, we have
\begin{align*}
& 			\|D_\lambda \varphi \|_{E^s_\sigma (\mathbb{R}^d) } \lesssim \lambda^{-d/2} 2^{s (\lambda-1) \varepsilon_0} \|\varphi \|_{E^{s}_\sigma (\mathbb{R}^d) }, \ \ \sigma \leq 0 ; \\
& 			\|D_\lambda \varphi \|_{E^s_\sigma (\mathbb{R}^d) } \lesssim \lambda^{-d/2+\sigma} 2^{s (\lambda-1) \varepsilon_0} \|\varphi \|_{E^{s}_\sigma (\mathbb{R}^d) }, \ \ \sigma > 0.
\end{align*}
For any $\mu<1$, we have
\begin{align*}
& \|D_\mu \varphi \|_{E^{s/\mu}_\sigma (\mathbb{R}^d) } \lesssim \mu^{-d/2+\sigma}  \|\varphi \|_{E^{s}_\sigma (\mathbb{R}^d) }, \ \ \sigma \leq 0; \\
& 	\|D_\mu \varphi \|_{E^{s/\mu}_\sigma (\mathbb{R}^d) } \lesssim \mu^{-d/2} \|\varphi \|_{E^{s}_\sigma (\mathbb{R}^d) }, \ \ \sigma > 0.
\end{align*}
\end{lemma}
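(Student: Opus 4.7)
\medskip

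\noindent\textbf{Proof plan.} The whole statement is an exercise in Fourier-side change of variable, so my plan is just to track the weights $\langle\xi\rangle^\sigma$ and $2^{s|\xi|}$ carefully under dilation, using the spectral support assumption only to absorb the exponential factor in the $\lambda>1$ case. Throughout, I interpret the hypothesis on $\mathrm{supp}\,\varphi$ as ``$\mathrm{supp}\,\widehat{\varphi}\subset\{\xi:|\xi|\ge\varepsilon_0\}$'', which is the only way it can yield the exponential gain $2^{s(\lambda-1)\varepsilon_0}$ (since $s\le 0$).

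\smallskip
\noindent\textbf{Step 1: Fourier-side identity.} Using $\widehat{D_\lambda\varphi}(\xi)=\lambda^{-d}\widehat{\varphi}(\xi/\lambda)$ and the substitution $\eta=\xi/\lambda$, I would write
\[
\|D_\lambda\varphi\|_{E^s_\sigma}^2
=\lambda^{-d}\int_{\mathbb{R}^d}\langle \lambda\eta\rangle^{2\sigma}\,2^{2s\lambda|\eta|}\,|\widehat{\varphi}(\eta)|^2\,d\eta,
\]
and similarly for $D_\mu$ with $\lambda$ replaced by $\mu$, so that the exponential weight from the $E^{s/\mu}_\sigma$ norm becomes
\[
2^{(2s/\mu)|\xi|}\big|_{\xi=\mu\eta}=2^{2s|\eta|},
\]
which is exactly the weight of $\|\varphi\|_{E^s_\sigma}^2$.

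\smallskip
\noindent\textbf{Step 2: The case $\lambda>1$.} On the support of $\widehat{\varphi}$ we have $|\eta|\ge\varepsilon_0$, and since $s\le 0$ and $\lambda-1>0$,
\[
2^{2s\lambda|\eta|}=2^{2s|\eta|}\cdot 2^{2s(\lambda-1)|\eta|}\le 2^{2s|\eta|}\cdot 2^{2s(\lambda-1)\varepsilon_0}.
\]
For the polynomial weight I use the elementary bounds $\langle\lambda\eta\rangle\le\lambda\langle\eta\rangle$ and $\langle\lambda\eta\rangle\ge\langle\eta\rangle$ (valid for $\lambda\ge 1$), so that $\langle\lambda\eta\rangle^{2\sigma}\le\langle\eta\rangle^{2\sigma}$ when $\sigma\le 0$ and $\langle\lambda\eta\rangle^{2\sigma}\le\lambda^{2\sigma}\langle\eta\rangle^{2\sigma}$ when $\sigma>0$. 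Plugging in gives, after taking square roots, the two $\lambda>1$ bounds with the factor $\lambda^{-d/2}$ or $\lambda^{-d/2+\sigma}$ respectively.

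\smallskip
\noindent\textbf{Step 3: The case $\mu<1$.} The exponential weight cancels exactly by Step 1, so I only need to control $\langle\mu\eta\rangle^{2\sigma}$. The inequalities $\mu\langle\eta\rangle\le\langle\mu\eta\rangle\le\langle\eta\rangle$ (valid for $0<\mu<1$) give $\langle\mu\eta\rangle^{2\sigma}\le\langle\eta\rangle^{2\sigma}$ when $\sigma>0$ and $\langle\mu\eta\rangle^{2\sigma}\le\mu^{2\sigma}\langle\eta\rangle^{2\sigma}$ when $\sigma\le 0$ (since $2\sigma\le 0$ reverses the inequality). Taking square roots of the resulting integral inequality and absorbing the $\mu^{-d}$ Jacobian gives the claimed bounds $\mu^{-d/2+\sigma}$ or $\mu^{-d/2}$.

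\smallskip
\noindent\textbf{Expected obstacle.} There is no genuine technical obstacle; the only thing that needs attention is getting the signs and the direction of the inequalities $\langle\lambda\eta\rangle$ vs.\ $\lambda\langle\eta\rangle$ correct in each of the four cases, and remembering that the support assumption is used \emph{only} in Step 2 (for $\mu<1$ the factor $|\eta|$ in the exponent multiplies $s(\mu-1)>0$, so no support information is needed or available for a gain there).
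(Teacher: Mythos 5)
Your proposal is correct and follows essentially the same route as the paper: the same Fourier-side change of variables producing the factor $\lambda^{-d/2}$ (resp.\ $\mu^{-d/2}$), the same absorption of $2^{s(\lambda-1)|\eta|}\le 2^{s(\lambda-1)\varepsilon_0}$ on the spectral support, and the same elementary comparisons $\langle\eta\rangle\le\langle\lambda\eta\rangle\le\lambda\langle\eta\rangle$ and $\mu\langle\eta\rangle\le\langle\mu\eta\rangle\le\langle\eta\rangle$ split according to the sign of $\sigma$. Your reading of the support hypothesis as a condition on $\widehat{\varphi}$ is also exactly how the paper uses it.
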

	\begin{proof}[\textit{Proof}]
		By the definition of $E^s_\sigma (\mathbb{R}^d)$, we have
		\begin{align*}
			\|\varphi_\lambda\|_{E^s_\sigma}
			 = \lambda^{-d/2}\|\langle \lambda \xi\rangle^\sigma 2^{s\lambda |\xi|}\widehat{\varphi}(\xi)\|_{L^2_{\xi}}.
		\end{align*}
If  ${\rm supp } \, \widehat{\varphi}  \subset \{\xi: |\xi|\geq \varepsilon_0\}$ and $\sigma>0$, then
		\begin{align*}
			\|\varphi_\lambda\|_{E^s_\sigma }
			& \leq  \lambda^{-d/2 +\sigma }\|\langle   \xi\rangle^\sigma 2^{s\lambda |\xi|}\widehat{\varphi}(\xi)\|_{L^2_{\xi}}
			  \leq \lambda^{-d/2+\sigma } 2^{s (\lambda-1) \varepsilon_0} \|\varphi\|_{E^{s}_\sigma }.
		\end{align*}
If  ${\rm supp } \, \widehat{\varphi}  \subset \{\xi: |\xi|\geq \varepsilon_0\}$ and $\sigma\leq 0$, then
		\begin{align*}
			\|\varphi_\lambda\|_{E^s_\sigma }
			& \leq  \lambda^{-d/2 }\|\langle   \xi\rangle^\sigma 2^{s\lambda |\xi|}\widehat{\varphi}(\xi)\|_{L^2_{\xi}}
			  \leq \lambda^{-d/2} 2^{s (\lambda-1) \varepsilon_0} \|\varphi\|_{E^{s}_\sigma }.
		\end{align*}
Hence, we have the results for $\lambda >1$. For $\mu <1$, we have
\begin{align*}
			\|\varphi_\mu\|_{E^{s/\mu}_\sigma }
			 =   \mu^{-d/2 }\|\langle  \mu \xi\rangle^\sigma 2^{s |\xi|}\widehat{\varphi}(\xi)\|_{L^2_{\xi}}
		\end{align*}
Using $\langle  \mu \xi\rangle \geq \mu \langle \xi\rangle $ for $\sigma \leq 0$, and  $\langle  \mu \xi\rangle \leq   \langle \xi\rangle $ for $\sigma > 0$, we have the results for $\mu <1$.
	\end{proof}

By Lemma \ref{scaling}, we see that  a great advantage for the scaling in $E^s_\sigma$ is that $\varphi_\lambda \in E^s_\sigma$ vanishes as $\lambda \to \infty$ if the support set of $\widehat{\varphi}$ is away from the origin.

\subsection{Proof of Theorem \ref{1DNLHglobal}}

{\it Step 1.} First, we consider the case $s\leq 0$ and ${\rm supp} \, \widehat{u}_0 \subset \{\xi: |\xi|_\infty \geq 1\}$ and $\|u_0\|_{E^s_\sigma}$ is sufficiently small. Let us consider the mapping
\begin{align*}
\mathcal{T}:		u(t) \to  e^{t\Delta}u_0+\int_0^t e^{(t-\tau)\Delta}u^m(\tau)~d\tau
\end{align*}
in the space
\begin{align*}
		\mathcal{D}= \{u: \, {\rm supp }\, \widehat{u(t)} \subset \{\xi: |\xi|_\infty \geq 1\}, \   \|u\|_{X_{s,\sigma}}  \leq \delta\},
	\end{align*}
where
\begin{align*}
	\|u\|_{X_{s,\sigma}}	=  \|u\|_{\widetilde{L} ^m (0,\infty; E^{s,\sigma+2/m}_{2,2})  }
	\end{align*}
By Corollary \ref{linearestimateapp} and Lemma \ref{nonlinearestimate}, we have
\begin{align*}
\|\mathcal{T}u\|_{X_{s,\sigma}}   \leq C  \|u_0\|_{E^s_\sigma} + C \|u\|^m_{X_{s,\sigma}},
\end{align*}
and similarly,
\begin{align*}
\|\mathcal{T} u -  \mathcal{T} v\|_{X_{s,\sigma}}   \leq C (\|u\|^{m-1}_{X_{s,\sigma}} + \|v\|^{m-1}_{X_{s,\sigma}}) \|u- v\|_{X_{s,\sigma}}.
\end{align*}
Taking $\delta =2C\|u_0\|_{E^s_\sigma}$, we see that for sufficiently small $\|u_0\|_{E^s_\sigma}$  satisfying
\begin{align} \label{smallcod}
  C \delta^{m-1} \leq 1/100,
\end{align}
 we have $\mathcal{T}: \mathcal{D}\to \mathcal{D}$ is a contraction mapping and it has a unique fixed point $u\in X_{s,\sigma}$, i.e.,
\begin{align*}
 u(t) =  e^{t\Delta}u_0+\int_0^t e^{(t-\tau)\Delta}u^m(\tau)~d\tau
\end{align*}
in the space $X_{s,\sigma}$. Moreover, by Corollary \ref{linearestimateapp} and Lemma \ref{nonlinearestimate},
\begin{align} \label{Xsbdd}
\|u\|_{\widetilde{L} ^1 (0,\infty; E^{s,\sigma+2}_{2,2})  \cap \widetilde{L} ^\infty (0,\infty; E^{s,\sigma}_{2,2})} \leq 2 C  \|u_0\|_{E^s_\sigma}.
\end{align}
Furthermore, we can show that $u\in C([0,\infty); E^s_\sigma)$. In fact, By Corollary \ref{linearestimateapp}
\begin{align*}
 \left\|\int_{t_0}^t e^{(t-\tau)\Delta}u^m(\tau)~d\tau \right\|_{L^{\infty}(t_0,t_1;  E^s_\sigma)}
& \leq \left \|\int_{t_0}^t e^{(t-\tau)\Delta}u^m(\tau)~d\tau \right\|_{ \widetilde{L} ^\infty (t_0,t_1; E^{s,\sigma}_{2,2})} \\
  &  \lesssim  \|u^m \|_{\widetilde{L} ^1 (t_0,t_1; E^{s,\sigma}_{2,2})}.
 \end{align*}
By Lemma \ref{nonlinearestimate},
 \begin{align*}
   \|u^m \|_{\widetilde{L} ^1 (t_0,t_1; E^{s,\sigma}_{2,2})} \lesssim      \|u\|^m_{\widetilde{L} ^m (t_0,t_1; E^{s,\sigma+2/m}_{2,2})} \to 0, \ t_0\to t_1.
 \end{align*}
 It follows that $u\in C([0,\infty); E^s_\sigma)$.

Now, we consider the global solution in the case $s<0$ and ${\rm supp }\, \widehat{u}_0 \subset \mathbb{R}^d_I \setminus \{0\}$. Then there exists $\varepsilon_0 >0$ such that $ \{\xi: |\xi|_\infty \geq \varepsilon_0\}$.  It suffices to consider the case that $\|u_0\|_{E^s_\sigma}$ is large.  Denote
\begin{align} \label{u0lambda}
u_{0,\lambda}(x) = \lambda^{2/(m-1)} u_0(\lambda x).
\end{align}
In view of Lemma \ref{scaling}, we have
$$
\|u_{0,\lambda} \|_{E^s_\sigma} \leq \lambda^{2/(m-1) -d/2 + 0\vee \sigma} 2^{s(\lambda-1) \varepsilon_0} \|u_0\|_{E^s_\sigma}.
$$
Noticing that $s<0$, we see that
$$
\lim _{\lambda \to\infty }  \lambda^{2/(m-1) -d/2+ 0\vee \sigma} 2^{s(\lambda-1) \varepsilon_0 } =0.
$$
Hence, $\|u_{0,\lambda} \|_{E^s_\sigma}$  is sufficiently small if $\lambda\gg 1$. Moreover, $\widehat{u}_{0,\lambda} \subset \{\xi: |\xi|_\infty \geq \varepsilon_0 \lambda\} \subset \{\xi: |\xi|_\infty \geq 1\}$ if $\lambda>1/ \varepsilon_0 $.  Using the above result, we get a unique solution $u_\lambda$ of the integral equation
\begin{align*}
 v(t) =  e^{t\Delta}u_{0,\lambda}+\int_0^t e^{(t-\tau)\Delta}v(\tau)^m~d\tau
\end{align*}
in the space $X_{s,\sigma}$. Put
$$
u(t,x) = \lambda^{-2/(m-1)} u_\lambda (\lambda^{-2}t, \lambda^{-1}x).
$$
We need to consider the scaling property in $X_{s,\sigma}$.

\begin{lemma} \label{scalingXs}
		Assume $s\leq 0$, $f \in \widetilde{L} ^1 (0,\infty; E^{s,\sigma+2}_{2,2})  \cap \widetilde{L} ^\infty (0,\infty; E^{s,\sigma}_{2,2})$.  Write  $  f_{1/\lambda} (t,x) = f(t/\lambda^2, x/\lambda )$. Then we have
		\begin{align*}
			\| f_{1/\lambda}\|_{\widetilde{L} ^1 (0,\infty; E^{s\lambda,\sigma+2}_{2,2})  \cap \widetilde{L} ^\infty (0,\infty; E^{s\lambda,\sigma}_{2,2})} \leq 2^{C\lambda} \|f\|_{\widetilde{L} ^1 (0,\infty; E^{s,\sigma+2}_{2,2})  \cap \widetilde{L} ^\infty (0,\infty; E^{s,\sigma}_{2,2})}, \ \ \lambda >1.
		\end{align*}
\end{lemma}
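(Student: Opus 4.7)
The key observation is that if $g(x)=f(x/\lambda)$ then $\widehat g(\xi)=\lambda^{d}\widehat f(\lambda\xi)$, so the frequency support of $\Box_k g$ corresponds, in the original Fourier variable $\eta=\lambda\xi$, to the cube $\lambda k+[0,\lambda)^d$. Hence I plan to relate each piece $\Box_k f_{1/\lambda}$ to the pieces $\Box_l f$ with $l$ in
$$ L_k:=\{l\in\mathbb{Z}^d:(l+[0,1)^d)\cap(\lambda k+[0,\lambda)^d)\neq\emptyset\}, $$
and dually $K_l:=\{k:l\in L_k\}$. A direct counting argument gives $|L_k|\leq(\lambda+1)^d\leq(2\lambda)^d$ while $|K_l|\leq 3^d$ (independent of $\lambda$), and each $k\in K_l$ satisfies $|k-l/\lambda|_\infty\leq 1+1/\lambda$.

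By Plancherel together with the covering $\chi_{\lambda k+[0,\lambda)^d}\leq\sum_{l\in L_k}\chi_{l+[0,1)^d}$ and the dilation identity $\|h(\cdot/\lambda)\|_{L^2_x}=\lambda^{d/2}\|h\|_{L^2_x}$, I obtain the pointwise in $t$ bound
$$ \|\Box_k f_{1/\lambda}(t)\|_{L^2_x}^2\leq \lambda^d\sum_{l\in L_k}\|\Box_l f(t/\lambda^2)\|_{L^2_x}^2. $$
Integrating in time and using substitution $\tau=t/\lambda^2$, for $\gamma=\infty$ a direct $\ell^2$ triangle inequality yields $\|\Box_k f_{1/\lambda}\|_{L^\infty_t L^2_x}^2\lesssim \lambda^{d}\sum_{l\in L_k}\|\Box_l f\|_{L^\infty_t L^2_x}^2$, whereas for $\gamma=1$ a Minkowski/triangle argument followed by Cauchy--Schwarz over the finite set $L_k$ gives $\|\Box_k f_{1/\lambda}\|_{L^1_t L^2_x}^2\lesssim \lambda^{2d+4}\sum_{l\in L_k}\|\Box_l f\|_{L^1_t L^2_x}^2$ (the extra $\lambda$ factors coming from the time change of variables and $|L_k|\lesssim\lambda^d$).

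Next I sum against $2^{2s\lambda|k|}\langle k\rangle^{2\tau}$ (with $\tau=\sigma$ or $\sigma+2$) and swap the order to sum over $l$ first, then $k\in K_l$. For $k\in K_l$ the bound $|k-l/\lambda|\leq d(1+1/\lambda)$ gives $\lambda|k|\geq|l|-d(\lambda+1)$, and since $s\leq 0$,
$$ 2^{2s\lambda|k|}\leq 2^{2|s|d(\lambda+1)}\,2^{2s|l|}. $$
The polynomial weight $\langle k\rangle^{2\tau}$ is comparable to $\langle l\rangle^{2\tau}$ up to a factor $C_{d,\tau}\lambda^{2|\tau|}$, as follows from $\langle k\rangle\asymp\langle l/\lambda\rangle$ and $\langle l/\lambda\rangle\leq \langle l\rangle$ for $\lambda\geq 1$. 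Since $|K_l|\leq 3^d$, summing gives
$$ \|f_{1/\lambda}\|_{\widetilde L^\gamma(0,\infty;E^{s\lambda,\tau}_{2,2})}^2\leq C_{d,\tau}\,\lambda^{O(1)}\,2^{2|s|d\lambda}\,\|f\|_{\widetilde L^\gamma(0,\infty;E^{s,\tau}_{2,2})}^2 $$
for $\gamma\in\{1,\infty\}$. Absorbing the polynomial prefactor $\lambda^{O(1)}\leq 2^{C'\lambda}$ for $\lambda>1$ into the exponential, taking square roots, and combining the $\gamma=1$ case (with $\tau=\sigma+2$) with the $\gamma=\infty$ case (with $\tau=\sigma$) yields the claimed bound $2^{C\lambda}$.

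The main obstacle is the bookkeeping in the fourth step: the dominant exponential factor $2^{|s|d\lambda}$ is intrinsic because the rescaled weight $2^{s\lambda|k|}$ at a fixed $k$ corresponds to frequency $l\approx\lambda k$ in the original $f$ whose weight is $2^{s|l|}$, and $|k-l/\lambda|$ is only $O(1)$, forcing an error of $2^{|s|\cdot O(\lambda)}$. This is exactly why the lemma must tolerate exponential growth in $\lambda$ rather than polynomial growth, and it dictates the form $2^{C\lambda}$ in the final estimate.
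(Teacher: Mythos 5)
Your proof is correct and follows essentially the same route as the paper's: both rewrite $\|\Box_k f_{1/\lambda}\|_{L^2}$ via the dilation identity as an $L^2$ norm over the cube $\lambda k+[0,\lambda)^d$, cover that cube by $O(\lambda^d)$ unit cubes, compare the weights $2^{s\lambda|k|}$ and $2^{s|l|}$ at the cost of the decisive factor $2^{|s|d\lambda}$, apply Cauchy--Schwarz over the $O(\lambda^d)$ cubes, and resum using the bounded overlap of the tiles. Your bookkeeping (the explicit sets $L_k$, $K_l$ and the count $|K_l|\leq 3^d$) is slightly more careful than the paper's, but the argument is the same.
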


	\begin{proof}[\textbf{Proof}]
By the definition of $X_{s,\sigma}$ and Lemma \ref{interlinearestimate},
we have
\begin{align*}
				\| f_{1/\lambda}\|_{\widetilde{L} ^1 (0,\infty; E^{s\lambda,\sigma+2}_{2,2})  \cap \widetilde{L} ^\infty (0,\infty; E^{s\lambda,\sigma}_{2,2})} \leq &  \left(\sum_{k \in \mathbb{Z}^d} \langle k\rangle^{2\sigma}  2^{2 s\lambda |k|}  \|\chi_{k+[0,1)^d} \widehat{f_{1/\lambda}}\|^2_{L^\infty_tL^2_x} \right)^{1/2} \\
 & +     \left(\sum_{k \in \mathbb{Z}^d} \langle k\rangle^{2(2+\sigma)}  2^{2 s\lambda |k|}  \|\chi_{k+[0,1)^d} \widehat{f_{1/\lambda}}\|^2_{L^1_tL^2_x} \right)^{1/2} .
\end{align*}
Since
\begin{align*}
\langle k\rangle^{\sigma} & 2^{  s\lambda |k|}  \|\chi_{k+[0,1)^d} \widehat{f_{1/\lambda}}\|_{L^\infty_tL^2_x}\\
 & \leq \lambda^{d/2 + 0\vee (-\sigma)}2^{|s|d\lambda} \| \langle\xi\rangle^{\sigma} 2^{s|\xi|}\chi_{\lambda k+[0,\lambda)^d} \widehat{f}\|_{L^\infty_tL^2_x} \\
& \leq \lambda^{d/2 + 0\vee (-\sigma)}2^{|s|d\lambda} \sum_{\ell \in  \mathbb{Z}^d \cap \lambda k+[0,\lambda)^d }\| \langle\xi\rangle^{\sigma} 2^{s|\xi|}\chi_{\ell +[0,1)^d} \widehat{f}\|_{L^\infty_tL^2_x}\\
& \leq \lambda^{d + 0\vee (-\sigma)}2^{|s|d\lambda} \left(\sum_{\ell \in  \mathbb{Z}^d \cap \lambda k+[0,\lambda)^d }\| \langle\xi\rangle^{\sigma} 2^{s|\xi|}\chi_{\ell +[0,1)^d} \widehat{f}\|^2_{L^\infty_tL^2_x}\right)^{1/2},
\end{align*}
one has that
\begin{align*}
 &  \left(\sum_{k \in \mathbb{Z}^d} \langle k\rangle^{2 \sigma }  2^{2 s\lambda |k|}  \|\chi_{k+[0,1)^d} \widehat{f_{1/\lambda}}\|^2_{L^\infty_tL^2_x} \right)^{1/2} \leq    2^{C\lambda} \|f\|_{\widetilde{L} ^\infty (0,\infty; E^{s,\sigma}_{2,2})}.
  \end{align*}
Similarly,
$$
\langle k\rangle^{2+ \sigma} 2^{  s\lambda |k| }  \|\chi_{k+[0,1)^d} \widehat{f_{1/\lambda}}\|_{L^1_tL^2_x} \leq \lambda^{2+d/2 + 0\vee (-2-\sigma)}2^{|s|d\lambda} \|\langle\xi\rangle^{2+\sigma} 2^{s|\xi|}\chi_{\lambda k+[0,\lambda)^d} \widehat{f}\|_{L^1_tL^2_x}.
$$
It follows that
\begin{align*}
 &  \left(\sum_{k \in \mathbb{Z}^d} \langle k\rangle^{2(2+\sigma) }  2^{2 s\lambda |k|}  \|\chi_{k+[0,1)^d} \widehat{f_{1/\lambda}}\|^2_{L^1_tL^2_x} \right)^{1/2} \leq    2^{C\lambda} \|f\|_{\widetilde{L}^1 (0,\infty; E^{s,\sigma}_{2,2})}.
  \end{align*}
We get the result, as desired.
	\end{proof}

By Lemmas \ref{scaling} and \ref{scalingXs}, $u \in X_{s\lambda,\sigma}$ and it satisfies
\begin{align*}
 u(t) =  e^{t\Delta}u_{0}+\int_0^t e^{(t-\tau)\Delta}u^m(\tau)~d\tau.
\end{align*}
Taking
\begin{align} \label{s0}
 s_0 =\lambda s ,
\end{align}
we have shown the global existence and uniqueness of solutions in (i) of Theorem \ref{1DNLHglobal}.

\begin{rem} \label{s0exp}
We can give an explanation to $s_0 =\lambda s$.  Recall that $\lambda$ can be chosen from \eqref{smallcod}. In order to \eqref{smallcod} holds, by Lemma \ref{scaling}, it suffices to take a $\lambda>1$ satisfying
$$
C(\lambda^{2/(m-1) -d/2 + 0\vee \sigma} 2^{s(\lambda -1) \varepsilon_0} \|u_0\|_{E^s_\sigma})^{m-1} \leq 1/100,
$$
where $C$ is a constant that only depends on the upper bound of the inequalities in Corollary \ref{linearestimateapp} and Lemma \ref{nonlinearestimate}. So, $\lambda=\lambda (\varepsilon_0, m,\sigma,s, \|u_0\|_{E^s_\sigma})$. For example, in the case $\sigma = d/2- 2/(m-1)$, we can take
$$
s_0  =s -( C +  \log_2^{1+ \|u_0\|_{E^s_\sigma}})   \varepsilon^{-1}_0   .
$$

\end{rem}

{\it Step 2.}  Now we consider the small data in $E^0_\sigma =H^\sigma $, we have shown the global existence and uniqueness of solutions in  $ C([0,\infty); E^{0}_\sigma)\cap \widetilde{L}^\infty (0,\infty; \, E^{0, \sigma}_{2,2} ) \cap \widetilde{L}^1 (0,\infty; \, E^{0, 2+ \sigma}_{2,2} )$ if initial data $u_0 \in H^\sigma$ small enough and
$$
{\rm supp} \, \widehat{u}_0 \subset \mathbb{R}^d_I \cap \{\xi: \, |\xi|_\infty \geq 1\}.
$$
Recall that for the initial data ${\rm supp} \, \widehat{u}_0 \subset \mathbb{R}^d_I \setminus \{0\}$, there exists $\varepsilon_0 >0$ such that
$$
{\rm supp} \, \widehat{u}_0 \subset \mathbb{R}^d_I \cap \{\xi: \, |\xi|_\infty \geq \varepsilon_0\}.
$$
Again, by the scaling argument, we see that $u_{0,\lambda}$ defined in \eqref{u0lambda} satisfying
$$
{\rm supp} \, \widehat{u}_{0, \lambda} \subset \mathbb{R}^d_I \cap \{\xi: \, |\xi|_\infty \geq \varepsilon_0 \lambda \}
$$
and
$$
\|{u}_{0, \lambda}\|_{H^\sigma} \leq C \lambda^{\sigma -d/2 +2/(m-1)} \|u_0\|_{H^\sigma}.
$$
Taking $\lambda =1/\varepsilon_0$ and
$$
 C \|u_0\|_{H^\sigma} \leq \delta \varepsilon_0^{\sigma -d/2 +2/(m-1)}.
$$
for some small $\delta >0 $ as in \eqref{smallcod}, we see that \eqref{1DNLHI} with initial data $u_{0,\lambda}$ has a unique global solution
 $u_\lambda \in  C([0,\infty); H^\sigma)\cap \widetilde{L}^\infty (0,\infty; \, E^{0, \sigma}_{2,2} ) \cap \widetilde{L}^1 (0,\infty; \, E^{0, 2+ \sigma}_{2,2} )$. Then
$u(t,x) = \lambda^{-2/(m-1)} u_\lambda (\lambda^{-2}t, \lambda^{-1}x).$  is the  unique global solution of  \eqref{1DNLHI} with initial data $u_{0}$
in the spaces $C([0,\infty); H^\sigma)\cap \widetilde{L}^\infty (0,\infty; \, E^{0, \sigma}_{2,2} ) \cap \widetilde{L}^1 (0,\infty; \, E^{0, 2+ \sigma}_{2,2} )$. So, we finish the proof of the part of existence and uniqueness of solutions in (ii) of Theorem \ref{1DNLHglobal}.

\vspace*{0.2cm}

{\it Step 3. }  We consider the ill-posedness issues of SLH
\begin{equation}\label{2illmodel}
		 \partial_t u - \Delta u  -      u^{m} =0, \  \ u (0,x) = u_0(x),
\end{equation}
Let $s<0$. Put
	\begin{equation}\label{mchooseinitial}
		\varphi   = \sum_{k\in 2^{4\mathbb{N}}, k>2d} \mathscr{F}^{-1}(2^{-sd|k|/2}(\chi_{[(m-1)k,(m-1)k+1/2 )^d}+\chi_{[-k-1/2(m-1),-k)^d})), \ \ u_0 = \delta \varphi,
	\end{equation}
we have $\|u_0\|_{E^s_\sigma}  \leq C$. Let $u_\delta$ be the solution of \eqref{2illmodel} with initial datum $ u_0=\delta\varphi$. Then
	\begin{align*}
\left.\frac{\partial u_\delta(t)}{\partial\delta }\right|_{\delta = 0} = e^{t\Delta} \varphi, ..., 	\	\left.\frac{\partial^m u_\delta(t)}{\partial \delta^m}\right|_{\delta = 0} = m!   \int_0^t e^{(t-\tau)\Delta}(e^{\tau\Delta}\varphi)^m d\tau.
	\end{align*}

	\begin{lemma} \label{Illposedness}
Let $s<0$. Suppose that $u_0$ is defined by \eqref{mchooseinitial}. Then, for any $s_0 \leq s$,
		\begin{align*}
			 \left\|\int_0^t e^{(t-\tau)\Delta}(e^{\tau\Delta} \varphi )^m d\tau \right\|_{E^{s_0}_\sigma} = \infty,\quad \forall ~t>0.
		\end{align*}
	\end{lemma}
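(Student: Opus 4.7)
The strategy is to extract a \emph{nonnegative} low-frequency contribution of the formal $m$-th order Picard iterate whose Fourier transform already diverges near the origin, using only the positivity built into the construction of $\varphi$. Observe first that since $s<0$, every coefficient $2^{-sd|k|/2}$ is positive, so $\widehat{\varphi}$ is a sum of positive indicator functions, and therefore $\widehat{e^{\tau\Delta}\varphi}(\xi)=e^{-\tau|\xi|^2}\widehat{\varphi}(\xi)$ is a nonnegative function. Consequently $\mathscr{F}[(e^{\tau\Delta}\varphi)^m]$ equals the $m$-fold convolution of this nonnegative function and, when expanded, is itself a sum of nonnegative contributions indexed by the choice of $m$ tuples $(k_j,\sigma_j)$ with $k_j\in 2^{4\mathbb{N}}\cap(2d,\infty)$ and $\sigma_j\in\{P,N\}$. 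Dropping any subfamily of such terms therefore provides a legitimate pointwise lower bound for $\widehat{I}(\xi)$, where I abbreviate $I:=\int_0^t e^{(t-\tau)\Delta}(e^{\tau\Delta}\varphi)^m\,d\tau$.

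For each $k$ I would retain only the $m$ selections in which exactly one factor lives in the positive box $P_k=[(m-1)k,(m-1)k+1/2)^d$ and the remaining $m-1$ in the negative box $N_k=[-k-1/(2(m-1)),-k)^d$. The centers add up to $(m-1)k\mathbf{1}+(m-1)(-k)\mathbf{1}=0$, so this contribution is supported in $\xi\in[-1/2,1/2)^d$. After the change of variables $\xi_j\mapsto \xi_j-\mathrm{center}_j$, the convolution of indicators becomes a $k$-independent nonnegative function $B(\xi)$ with $B(0)=(1/(2(m-1)))^{d(m-1)}>0$, hence $B\ge c_0>0$ on some neighborhood $U$ of $0$. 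On the relevant $\xi_j$'s one has $\sum_j|\xi_j|^2\le C_m k^2$, producing a Gaussian factor $e^{-C_m\tau k^2}$, while the amplitude from the product of $m$ coefficients is $m\cdot 2^{-sdkm/2}$, which \emph{grows exponentially} in $k$ because $s<0$.

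Integrating in $\tau$ against the outer heat kernel (which satisfies $e^{-(t-\tau)|\xi|^2}\ge e^{-td}$ for $\xi\in U$) and noting $\int_0^t e^{-C_m\tau k^2}\,d\tau\ge c_t/k^2$ for $k$ large, the pointwise nonnegativity of every discarded term yields, for $\xi\in U$,
\begin{align*}
\widehat{I}(\xi) \;\ge\; c_t\, B(\xi)\sum_{k\in 2^{4\mathbb{N}},\,k>2d}\frac{2^{-sdkm/2}}{k^2}\;=\;+\infty.
\end{align*}
Since for any $s_0\le s$ the weight $\langle\xi\rangle^\sigma 2^{s_0|\xi|}$ in $\|\cdot\|_{E^{s_0}_\sigma}$ is bounded below on the compact neighborhood $U$, the $L^2$ contribution coming from $U$ alone is infinite, which forces $\|I\|_{E^{s_0}_\sigma}=+\infty$.

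\textbf{Main obstacle.} The genuine difficulty lies in giving rigorous meaning to the divergent pointwise lower bound above, because $\varphi$ is only a tempered distribution and the $m$-fold convolution is formal. I would handle this by truncating: set $\varphi_K=\sum_{k\le K,\,k\in 2^{4\mathbb{N}},\,k>2d}(\cdots)$, for which $e^{\tau\Delta}\varphi_K$ is Schwartz and $I(\varphi_K)$ is genuinely computable. The estimates above then give finite lower bounds $\|I(\varphi_K)\|_{E^{s_0}_\sigma}\ge c_t(\sum_{k\le K}2^{-sdkm/2}/k^2)^{1/2}\to\infty$ by monotone convergence, which is exactly what is needed to contradict $C^m$-differentiability of the solution map $u_0\mapsto u$ from $E^s_\sigma$ into $E^{s_0}_\sigma$ at $u_0=0$ along the direction $\varphi$.
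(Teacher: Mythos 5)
Your proposal is correct and follows essentially the same route as the paper: both arguments exploit the nonnegativity of $\widehat{\varphi}$ to discard all but the frequency combination in which one factor sits in the positive box $[(m-1)k,(m-1)k+1/2)^d$ and the other $m-1$ in the negative box, so that the centers cancel and the contribution lands near the origin with amplitude $2^{-smdk/2}$ (exponentially large since $s<0$) against only a polynomially small $\tau$-integral factor $1/k^2$, which blows up the $E^{s_0}_\sigma$ norm as $k\to\infty$. Your additional truncation/monotone-convergence remark is a harmless rigor supplement to the paper's "take $k\to\infty$ in the single-$k$ lower bound" phrasing.
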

	\begin{proof} Let us observe that for $\xi=\xi_1+...+\xi_m$, $e=(1,1,...,1)$,
\begin{align*}
  \widehat{e^{\tau\Delta}\varphi}*...*\widehat{e^{\tau\Delta}\varphi} (\xi)  & \geq     \int_{\mathbb{R}^d} e^{-\tau (|\xi_1|^2+...+|\xi_m|^2) } \widehat{\varphi}(\xi_1)...\widehat{\varphi}(\xi_m) d\xi_1...d\xi_{m-1} \\
   & \geq    2^{-s m d k /2} \int_{\mathbb{R}^d} e^{-\tau (|\xi_1|^2+...+|\xi_m|^2) }\prod^{m-1}_{j=1} \chi_{[-k-1/2(m-1),-k)^d} (\xi_j) \\
   & \quad\quad \quad\quad \quad\quad \times \chi_{[(m-1)k , (m-1)k+1/2)^d}  (\xi_m) d\xi_1...d\xi_{m-1} \\	
   & =   2^{-s m d k /2} \int_{\mathbb{R}^d} e^{-\tau (|\xi_1-ke|^2+...+|\xi_{m-1}-ke|^2 +|\xi_{m}+(m-1)ke|^2)  } \\
   & \quad\quad \quad\quad \quad\quad \times \prod^{m-1}_{j=1} \chi_{[-1/2(m-1),0)^d} (\xi_j) \chi_{[0 , 1/2)^d}  (\xi_m) d\xi_1...d\xi_{m-1} \\ 		&  \geq   2^{-s m d k /2} e^{-\tau C d^2 m^2 k^2 } \int_{\mathbb{R}^d}
    \prod^{m-1}_{j=1} \chi_{[-1/2(m-1),0)^d} (\xi_j) \chi_{[0 , 1/2)^d}  (\xi_m) d\xi_1...d\xi_{m-1} \\
    &  :=   2^{-s m d k /2} e^{-\tau C d^2 m^2 k^2 } \psi(\xi)
\end{align*}
for all $k\in 2^{4\mathbb{N}}$. Note that ${\rm supp} \, \psi \subset [-1/2,1/2)^d$.  So, we have
\begin{align*}
	  \mathscr{F}\int_0^t e^{(t-\tau)\Delta}(e^{\tau\Delta}\varphi )^m dt
& = \int_0^t e^{-(t-\tau)|\xi|^2}(\widehat{e^{\tau\Delta}\varphi}*...*\widehat{e^{\tau\Delta}\varphi})(\xi) d\tau \\
			& \gtrsim  2^{-s m d k /2}  e^{- t |\xi|^2} \psi \int_0^t  e^{-  \tau C d^2 m^2 k^2  } d\tau  \\
			&\gtrsim  2^{-sm dk/2}\frac{e^{-td}}{k^2} (1- e^{-tC d^2 m^2 k^2})  \psi .
		\end{align*}
Hence, we immediately obtain that
$$
 \left\|\int_0^t e^{(t-\tau)\Delta}(e^{\tau\Delta}u_0)^m dt\right\|_{E^{s_0}_{\sigma}}  =\infty
$$
by taking $k\to \infty$.  So, the mapping $u_0 \to u$ is not $C^m$ from $E^s_\sigma$ into $E^{s_0}_\sigma$ for any $s_0\leq s$.
	\end{proof}

We have shown a stronger result, i.e., the corresponding integral equation of \eqref{2illmodel} has no iteration solution in any $E^{s_0}_\sigma$ for the initial data as in \eqref{mchooseinitial}. In fact, let us observe the iteration sequence
$$
u^{(j+1)}(t)= \delta e^{t\Delta}\varphi +   \int_0^t e^{(t-\tau)\Delta}u^{(j)}(\tau)^m d\tau, \ \ u^{(0)} =0.
$$
It follows that
$$
\widehat{u^{(j+1)}(t)}= \delta e^{t|\xi|^2} \widehat{\varphi} +   \int_0^t e^{(t-\tau)|\xi|^2}\widehat{u^{(j)}(\tau)} *...*  \widehat{u^{(j)}(\tau)} d\tau, \ \ \widehat{u^{(0)}} =0.
$$	
Since $\widehat{\varphi}(\xi) \geq 0$ for all $\xi \in \mathbb{R}^d$, we see that
$$
\widehat{u^{(2)}(t)} \geq \delta e^{t|\xi|^2} \widehat{\varphi} +  \delta^m \int_0^t e^{(t-\tau)|\xi|^2} \widehat{e^{\tau\Delta}\varphi}*...*\widehat{e^{\tau\Delta}\varphi}   d\tau.
$$	
Hence we have
$$
\|u^{(2)}(t)\|_{E^{s_0}_\sigma} = \infty, \ \ \forall \ t>0.
$$
Since $\widehat{u^{(j)}}$  is nonnegative for any $j\geq 1$,  we immediately have $\|u^{(j)}(t)\|_{E^{s_0}_\sigma} = \infty$ for any $ t>0$. It follows that SLH \eqref{model} has no iteration solution in $E^{s_0}_\sigma$ for any $s_0\leq s$.

\vspace{0.2cm}

Now we consider the ill-posedness for the solutions in $H^\sigma$, $\sigma < d/2-2/(m-1)$. Put
	\begin{equation}\label{2chooseinitial}
		\psi (\xi)  = \prod^d_{j=1} \chi_{[1/2d,1/d) } (\xi(j)), \ \  \varphi= N^{-\sigma -d/2} \psi(\xi/N),   \ \ u_0 = \delta \varphi,
	\end{equation}
we have $\|\varphi\|_{H^\sigma} \sim 1$. Let $u_\delta$ be the solution of \eqref{2illmodel} with initial datum $ u_0=\delta\varphi$. Then
	\begin{align*}
\left.\frac{\partial u_\delta(t)}{\partial\delta }\right|_{\delta = 0} = e^{t\Delta} \varphi, \ 		\left.\frac{\partial^m u_\delta(t)}{\partial\delta^m}\right|_{\delta = 0} =  m!  \int_0^t e^{(t-\tau)\Delta}(e^{\tau\Delta}\varphi)^m d\tau.
	\end{align*}
 We have for $\xi_m =\xi-\xi_1-...-\xi_{m-1}$,
\begin{align*}
\left.\widehat{\frac{\partial^m u_\delta(t)}{\partial\delta^m}}\right|_{\delta = 0}	 &	=   	m!  \int_0^t e^{-(t-\tau)|\xi|^2}     (\widehat{e^{\tau\Delta} \varphi} *...* \widehat{e^{\tau\Delta} \varphi} )  d\tau \\
 &	=m! N^{-(\sigma+d/2)m} 	e^{-t |\xi|^2}  \int_0^t \int    e^{ \tau (|\xi|^2-\sum^m_{j=1} |\xi_j|^2)} \varphi \left(\frac{\xi_1}{N}\right)... \varphi \left(\frac{\xi_m}{N}\right) d\tau  d\xi_1...d\xi_{m-1} \\
 &	=m! N^{-(\sigma+d/2)m} 	e^{-t |\xi|^2}    \int  \frac{e^{ t (|\xi|^2-\sum^m_{j=1} |\xi_j|^2)}-1 }{|\xi|^2-\sum^m_{j=1} |\xi_j|^2}  \varphi \left(\frac{\xi_1}{N}\right)... \varphi \left(\frac{\xi_m}{N}\right)   d\xi_1...d\xi_{m-1}.
\end{align*}
Taking $t_N \sim 1/N^2$, one sees that
$$
 \frac{e^{ t_N (|\xi|^2-\sum^m_{j=1} |\xi_j|^2)}-1 }{|\xi|^2-\sum^m_{j=1} |\xi_j|^2} =  \frac{e^{ 2 t_N \sum_{i<j} \xi_i \xi_j}-1 }{ 2\sum_{i<j} \xi_i \xi_j }  \gtrsim \frac{1}{N^2}
$$
in the support set of $\varphi(\xi_1/N)... \varphi(\xi_m/N)$. So, we have
\begin{align*}
& \left\|\left. \frac{\partial^m u_\delta(t_N)}{\partial\delta^m}\right|_{\delta = 0}\right\|_{H^\sigma}\\ 	 &	 = \left\|\langle \xi\rangle^{\sigma} \left. \widehat{\frac{\partial^m u_\delta(t_N)}{\partial\delta^m}}\right|_{\delta = 0} \chi_{[mN/2, mN]}(|\xi|)\right\|_{2} \\
&	\gtrsim N^{-m(\sigma+d/2)-2+\sigma}  \left\| \int  \varphi \left(\frac{\xi_1}{N}\right)...  \varphi \left(\frac{\xi_{m-1}}{N}\right) \varphi \left(\frac{\xi-\xi_1-...-\xi_{m-1}}{N}\right)   d\xi_1...d\xi_{m-1} \right\|_{2}\\
&	\gtrsim N^{(m-1)(d/2-\sigma)-2 }.
\end{align*}
Noticing that $\sigma < d/2-2/(m-1)$, we see that $(m-1)(d/2-\sigma)-2 >0$. Taking $N\to \infty$, we immediately obtain that
\begin{align*}
& \left\|\left. \frac{\partial^m u_\delta(t_N)}{\partial\delta^m}\right|_{\delta = 0}\right\|_{H^\sigma} \to \infty.
\end{align*}
So, we have shown the ill-posedness part of (ii) in Theorem \ref{1DNLHglobal}.

\vspace{0.5cm}

{\it Step 4.} We show the error estimates for the iteration solutions. The iteration equations are
$$
u^{(j+1)}=   e^{t\Delta} u_0  +   \int_0^t e^{ (t-\tau)\Delta} (u^{(j)}(\tau))^m  d\tau, \ \ u^{(0)} =0.
$$	
Denote $\widehat{u^{(j)}} = v^{j}$, $\widehat{u}_0 = v_0$. For convenience, we denote $*^m a=\underset{m}{\underbrace{a*...*a}}$. It follows that $v^{j}$ satisfies
\begin{align} \label{iteseq}
 v^{j+1} =   e^{-t|\xi|^2} v_0  +   \int_0^t e^{-(t-\tau)|\xi|^2} (*^m v^{j}) (\tau)  d\tau, \ \ v^0 =0.
\end{align}	
By induction we have
\begin{lemma} \label{supportiterat}
Let $v^{j}$ be the iteration solution of \eqref{iteseq}. Then we have for all $j\geq 1$,
$$
{\rm supp}\, (v^{j+1} - v^{j}) \subset \{ \xi\in \mathbb{R}^d_I: \,   |\xi|\geq j (m-1) \varepsilon_0 \}.
$$
\end{lemma}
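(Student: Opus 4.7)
My plan is to proceed by induction on $j$, preceded by an elementary preliminary support statement. Two facts from the paper's conventions drive everything. First, since $|\xi|=|\xi(1)|+\cdots+|\xi(d)|$, whenever $\xi_1,\dots,\xi_m \in \mathbb{R}^d_I$ we have $\xi_1+\cdots+\xi_m \in \mathbb{R}^d_I$ and $|\xi_1+\cdots+\xi_m|=|\xi_1|+\cdots+|\xi_m|$. Second, both the multiplier $\xi \mapsto e^{-(t-\tau)|\xi|^2}$ and the operation $\int_0^t(\cdot)\,d\tau$ preserve $\xi$-support. Consequently, if factors $a_1,\dots,a_m$ have $\xi$-supports contained in $\mathbb{R}^d_I \cap \{|\xi|\geq \alpha_k\}$ respectively, then $\int_0^t e^{-(t-\tau)|\xi|^2}(a_1*\cdots*a_m)(\tau)\,d\tau$ has $\xi$-support in $\mathbb{R}^d_I \cap \{|\xi|\geq \alpha_1+\cdots+\alpha_m\}$.

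Preliminary step: I would first show by induction that $\mathrm{supp}\,v^\ell \subset \mathbb{R}^d_I \cap \{|\xi|\geq \varepsilon_0\}$ for every $\ell \geq 1$. This is immediate for $\ell=1$ from the hypothesis on $v_0$, since $v^1=e^{-t|\xi|^2}v_0$. For $\ell \geq 2$, the recurrence \eqref{iteseq} combined with $\mathrm{supp}(*^m v^{\ell-1}) \subset \mathbb{R}^d_I \cap \{|\xi|\geq m\varepsilon_0\}$ yields the claim.

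Main induction on $j$: For the base case $j=1$, write $v^2-v^1=\int_0^t e^{-(t-\tau)|\xi|^2}(*^m v^1)(\tau)\,d\tau$, which by the preliminary step has $\xi$-support in $\mathbb{R}^d_I\cap\{|\xi|\geq m\varepsilon_0\}\subset \{|\xi|\geq (m-1)\varepsilon_0\}$. For the inductive step $j \geq 2$, use the telescoping identity
\[
*^m v^j - *^m v^{j-1} = \sum_{i=1}^m (v^{j-1})^{*(i-1)} * (v^j - v^{j-1}) * (v^j)^{*(m-i)},
\]
so that $v^{j+1}-v^j$ is the $\tau$-integral of $e^{-(t-\tau)|\xi|^2}$ applied to this sum. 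Each summand has $(m-1)$ factors drawn from $\{v^j,v^{j-1}\}$—all with $\xi$-support in $\mathbb{R}^d_I\cap\{|\xi|\geq \varepsilon_0\}$ by the preliminary step—and one factor $v^j-v^{j-1}$ with $\xi$-support in $\mathbb{R}^d_I\cap\{|\xi|\geq (j-1)(m-1)\varepsilon_0\}$ by the inductive hypothesis. Adding the lower bounds gives $\mathbb{R}^d_I\cap\{|\xi|\geq (m-1)\varepsilon_0+(j-1)(m-1)\varepsilon_0\}=\mathbb{R}^d_I\cap\{|\xi|\geq j(m-1)\varepsilon_0\}$, as claimed.

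The statement is pure support-tracking, so there is no genuine obstacle; the only point requiring care is treating $j=1$ separately, because the telescoping formula degenerates when $v^{j-1}=v^0=0$. As noted, this base case reduces immediately to the $m$-fold convolution estimate for $v^1$ alone and is in fact strictly stronger than what the induction hypothesis at $j=1$ would demand.
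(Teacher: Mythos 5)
Your proof is correct and follows essentially the same route as the paper: the paper's argument is exactly an induction on $j$ using the factorization $(*^m v^{j}) - (*^{m} v^{j-1}) = (v^{j}- v^{j-1}) * \sum^{m-1}_{\ell=0} (*^{m-1-\ell} v^{j} ) * (*^{\ell} v^{j-1})$, which is your telescoping identity after commuting the convolution factors. You merely make explicit the two ingredients the paper leaves implicit (the uniform bound $\mathrm{supp}\, v^{\ell}\subset\mathbb{R}^d_I\cap\{|\xi|\geq\varepsilon_0\}$ and the additivity of $|\cdot|$ on the first octant), so no further changes are needed.
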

\begin{proof}
It is easy to see that
$$
v^1=  e^{-t|\xi|^2} v_0, \ \ v^2 =  e^{-t|\xi|^2} v_0  +   \int_0^t e^{-(t-\tau)|\xi|^2} (*^m v^1) (\tau)  d\tau.
$$
So, we have ${\rm supp}\, (v^{2} - v^1) \subset \{ \xi\in \mathbb{R}^d_I: \,   |\xi|\geq  (m-1) \varepsilon_0 \} $ and ${\rm supp}\,  v^{2} \subset \{\xi\in \mathbb{R}^d_I:\, |\xi| \geq \varepsilon_0\}$.
Let us observe that
\begin{align} \label{iteseq-v}
 v^{j+1} -v^{j} =    \int_0^t e^{-(t-\tau)|\xi|^2} \left((*^m v^{j}) - (*^{m} v^{j-1}) \right) (\tau)  d\tau.
\end{align}
Using the identity
$$
(*^m v^{j}) - (*^{m} v^{j-1}) = (v^{j}- v^{j-1}) * \left(\sum^{m-1}_{\ell=0} (*^{m-1-\ell} v^{j} ) * (*^{\ell} v^{j-1} ) \right),
$$
by induction we have the result, as desired.
\end{proof}

Using the fact that
$$
v^{j+r}  = v^{j} + \sum^r_{\ell=1} (v^{j+\ell} - v^{j+\ell -1})
$$
and Lemma \ref{supportiterat}, we have for any $r\in \mathbb{N}$,
$$
v^{j+r}(t,\xi) \chi_{\{\xi\in \mathbb{R}^d_I: |\xi|< (m-1) j \varepsilon_0\} }= v^{j}(t,\xi)  \chi_{\{\xi\in \mathbb{R}^d_I: |\xi|< (m-1) j \varepsilon_0\}  }.
$$
By (i) of Theorem \ref{1DNLHglobal}, we see that for any $t>0$,
$$
 \|u(t)-u^{(j)}(t) \|_{E^{s_0}_\sigma} \to 0, \ \ j\to \infty.
$$
It follows that
$$
v (t,\xi) \chi_{\{\xi\in \mathbb{R}^d_I:\,|\xi|< (m-1) j \varepsilon_0\} }= v^{j}(t,\xi)  \chi_{\{\xi\in \mathbb{R}^d_I:\,|\xi|<(m-1)j  \varepsilon_0 \} },
$$
which means that
\begin{align} \label{supportsetv}
{\rm supp}\, (v-v^{j}) \subset \left\{\xi\in \mathbb{R}^d_I: |\xi|\geq (m-1) j  \varepsilon_0 \right\}.
\end{align}

\begin{lemma} \label{Errorlem}
Let  $ 1\leq p, q\leq \infty$, $\sigma, \tilde{s} \in \mathbb{R}$,  $A\gg 1$. Suppose that ${\rm supp}\, \widehat{f} \subset \{\xi\in \mathbb{R}^d: |\xi|_\infty \geq A \}$. Then we have
$$
\left\|\int^t_0 e^{(t-\tau)\Delta} f(\tau)d\tau\right\|_{\widetilde{L}^\infty(\mathbb{R}_+, E^{\tilde{s},\sigma}_{p,q})} \lesssim \frac{1}{A^2}\|f\|_{\widetilde{L}^\infty(\mathbb{R}_+, E^{\tilde{s},\sigma}_{p,q})}.
$$
\end{lemma}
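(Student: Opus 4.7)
The plan is a frequency-localised heat-kernel argument. First I would observe that $\Box_k$ commutes with $e^{s\Delta}$ (both are Fourier multipliers), so if $|k|_\infty < A-1$ then $k+[0,1)^d$ lies entirely outside $\{|\xi|_\infty \geq A\}$ and therefore $\Box_k \int_0^t e^{(t-\tau)\Delta}f(\tau)\,d\tau \equiv 0$. Hence the only indices $k$ contributing to the norm on the left satisfy $|k|_\infty \geq A-1 \gtrsim A$; this is what will eventually supply the factor $A^{-2}$.

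The core technical step is the semigroup estimate
\begin{equation} \label{eq:semigroupbdlem}
\|\Box_k e^{s\Delta} g\|_{L^p_x} \leq C\, e^{-c s|k|^2}\,\|\Box_k g\|_{L^p_x}, \qquad s\geq 0,\ 1\leq p\leq \infty,
\end{equation}
valid whenever $|k|_\infty$ is sufficiently large. To prove it, write $\Box_k e^{s\Delta}g = K_{s,k} \ast \Box_k g$ with kernel $K_{s,k} = \mathscr{F}^{-1}(\psi_k(\xi)\, e^{-s|\xi|^2})$, where $\psi_k$ is a smooth bump equal to $1$ on $k+[0,1)^d$ and supported in a slight enlargement. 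Shifting the Fourier variable by $k$ only modulates $K_{s,k}$ and preserves its $L^1$ norm, so the task reduces to estimating $\|\mathscr{F}^{-1}(\psi_0(\zeta)\, e^{-s|k+\zeta|^2})\|_{L^1}$, where $\psi_0$ is a fixed compactly supported bump. On this fixed support one has $|k+\zeta|^2 \geq |k|^2/2$ and $|\partial^\alpha_\zeta\, e^{-s|k+\zeta|^2}| \lesssim_\alpha (1+s|k|)^{|\alpha|}\, e^{-cs|k|^2}$. Plugging these into Proposition \ref{Nikolskii} yields $\|K_{s,k}\|_{L^1} \lesssim (1+s|k|)^{d}\, e^{-cs|k|^2}$, and the polynomial factor can be absorbed into the Gaussian (since $|k|\gtrsim A \gg 1$), establishing \eqref{eq:semigroupbdlem} via Young's inequality.

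Granted \eqref{eq:semigroupbdlem}, I would commute $\Box_k$ inside the integral, apply the estimate pointwise in $\tau$, and integrate in time:
\[
\Big\|\Box_k \int_0^t e^{(t-\tau)\Delta}f(\tau)\,d\tau\Big\|_{L^p_x} \leq C\int_0^t e^{-c(t-\tau)|k|^2}\,d\tau \cdot \|\Box_k f\|_{L^\infty_\tau L^p_x} \leq \frac{C}{|k|^2}\,\|\Box_k f\|_{L^\infty_\tau L^p_x}.
\]
Taking $\sup_t$, then the weighted $\ell^q$-norm in $k$ with the weight $2^{\tilde s |k|}\langle k\rangle^{\sigma}$, and using $|k|^2 \gtrsim A^2$ on the only surviving indices, yields the claimed bound.

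The main obstacle is \eqref{eq:semigroupbdlem} for $p\neq 2$: when $p=2$ it is immediate from Plancherel, but for other $p$ one has to track the $k$-dependence of the $L^1$-norm of a family of Schwartz kernels. The key trick, which renders this tractable, is the frequency shift $\xi \mapsto k+\zeta$, which moves all the $k$-dependence into the multiplier $e^{-s|k+\zeta|^2}$ on a fixed compact support; its derivatives then acquire only polynomial factors in $s|k|$, which are comfortably dominated by the Gaussian because $|k|\gtrsim A\gg 1$.
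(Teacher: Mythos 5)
Your proposal is correct and follows essentially the same route as the paper: restrict to the indices $|k|_\infty\gtrsim A$ that survive the support condition, use the frequency-localized decay $\|\Box_k e^{t\Delta}g\|_p\lesssim e^{-ct|k|^2}\|\Box_k g\|_p$, integrate in $\tau$ via Young's inequality to extract $|k|^{-2}\lesssim A^{-2}$, and then take the weighted $\ell^q$-norm. The only difference is that the paper simply cites this decay estimate from the literature, whereas you prove it via the frequency-shift and Proposition \ref{Nikolskii}; your kernel argument is a correct reconstruction of that cited fact.
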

\begin{proof}
Let us recall the decay estimate (cf. \cite{WaZhGu2006})
$$
\|\Box_k e^{t\Delta} u_0\|_p \lesssim  e^{-c t|k|^2}\|\Box_k u_0\|_p,  \ k\in \mathbb{Z}^d_I, \ |k|_\infty \geq 1.
$$
It follows that
$$
\left\|\Box_k \int^t_0 e^{(t-\tau)\Delta} f(\tau)d\tau\right\|_p \lesssim \int^t_0 e^{-c(t-\tau)|k|^2} \|\Box_k f(\tau)\|_p d\tau.
$$
Using Young's inequality, we have for $|k|_\infty \geq A$
\begin{align}
\left\|\Box_k \int^t_0 e^{(t-\tau)\Delta} f(\tau)d\tau\right\|_{L^\infty_tL^p_x(\mathbb{R}_+\times \mathbb{R}^d) } & \lesssim \| e^{-ct|k|^2}\|_{L^1_t} \|\Box_k f(\tau)\|_{L^\infty_tL^p_x(\mathbb{R}_+\times \mathbb{R}^d) } \nonumber\\
& \lesssim \frac{1}{|k|^2} \|\Box_k f(\tau)\|_{L^\infty_tL^p_x(\mathbb{R}_+\times \mathbb{R}^d) } \nonumber\\
& \lesssim \frac{1}{A^2} \|\Box_k f(\tau)\|_{L^\infty_tL^p_x(\mathbb{R}_+\times \mathbb{R}^d) }. \label{errorest1}
\end{align}
\eqref{errorest1} is multiplied by $2^{\tilde{s}|k|} \langle k\rangle^\sigma$ and then equipped with the $\ell^q$-norm, we have the result, as desired.
\end{proof}

Using the integral equation,
\begin{align} \label{iteseq1}
v- v^{j} =     \int_0^t e^{-(t-\tau)|\xi|^2}(*^m v- *^m v^{j-1} )(\tau) d\tau,
\end{align}	
and taking $p=2,\, q=1, \sigma=0, \tilde{s}=\tilde{s}_0 <s_0$ in Lemma \ref{Errorlem},  we have
\begin{align} \label{iteseq2}
\sum_{l\in \mathbb{Z}^d} & 2^{\tilde{s}_0 |l|} \|v- v^{j}\|_{L^\infty_t L^2_\xi(\mathbb{R}_+ \times (l+[0,1)^d))}  \nonumber\\
& \lesssim \frac{1}{|j|^2} \sum_{l\in \mathbb{Z}^d}  2^{\tilde{s}_{0}|l|}  \left\|(v-  v^{j-1}) * \sum^{m-1}_{\ell =0}(*^{m-1-\ell }v)* (*^\ell v^{j-1}) \right\|_{L^\infty_t L^2_\xi(\mathbb{R}_+ \times (l+[0,1)^d))}.
\end{align}	
To estimate the right hand side of \eqref{iteseq2}, we need the following
\begin{lemma}\label{errornonlinearestimate}
Let $\tilde{s}_0 \leq 0$. Suppose that ${\rm supp}\, f_j \subset \mathbb{R}^d_I$. Then we have
$$
\sum_{l\in \mathbb{Z}^d}  2^{\tilde{s}_0|l|}  \|f_1*...*f_m\|_{L^\infty_t L^2_\xi(\mathbb{R}_+ \times (l+[0,1)^d))}   \lesssim  \prod^{m}_{j=1} \sum_{l\in \mathbb{Z}^d}  2^{\tilde{s}_0|l|}  \|f_j\|_{L^\infty_t L^2_\xi(\mathbb{R}_+ \times (l+[0,1)^d))} .
$$
\end{lemma}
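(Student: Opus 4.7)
The plan is to exploit the key algebraic fact that, since every $f_j$ is supported in $\mathbb{R}^d_I$, the $\ell^1$-type norm $|\cdot|$ is \emph{additive} along convolution: for $l_j\in\mathbb{Z}^d_I$, $l_1+\cdots+l_m\in\mathbb{Z}^d_I$ and
$$
|l_1+\cdots+l_m| \;=\; |l_1|+\cdots+|l_m|,
$$
so the weight $2^{\tilde{s}_0|\cdot|}$ is exactly multiplicative on the relevant support. This is the analog for convolution of the orthogonality identity \eqref{orthog} used earlier.

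First, I would decompose each factor by setting $g_j^{l_j}(t,\xi):=f_j(t,\xi)\chi_{l_j+[0,1)^d}(\xi)$ for $l_j\in\mathbb{Z}^d_I$, so that $f_j=\sum_{l_j\in\mathbb{Z}^d_I}g_j^{l_j}$ and consequently
$$
f_1*\cdots*f_m=\sum_{l_1,\dots,l_m\in\mathbb{Z}^d_I}g_1^{l_1}*\cdots*g_m^{l_m}.
$$
Since $g_j^{l_j}$ is supported in the unit cube $l_j+[0,1)^d$, the convolution $g_1^{l_1}*\cdots*g_m^{l_m}$ is supported in $l_1+\cdots+l_m+[0,m)^d$. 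Hence, for the $L^2_\xi(l+[0,1)^d)$-piece, only those tuples $(l_1,\ldots,l_m)$ with $l-(l_1+\cdots+l_m)\in\{-m,\ldots,m\}^d$ contribute, a fact I will use to control the finite overlap.

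Second, I would estimate each cube-localized convolution by iterated Young's inequality, placing one factor in $L^2_\xi$ and the remaining $m-1$ in $L^1_\xi$. Since each $g_j^{l_j}$ is supported in a set of unit measure, Cauchy--Schwarz yields $\|g_j^{l_j}\|_{L^1_\xi}\leq\|g_j^{l_j}\|_{L^2_\xi}$, and therefore
$$
\bigl\|g_1^{l_1}*\cdots*g_m^{l_m}\bigr\|_{L^\infty_tL^2_\xi}\;\lesssim\;\prod_{j=1}^m\bigl\|g_j^{l_j}\bigr\|_{L^\infty_tL^2_\xi}
\;=\;\prod_{j=1}^m a_j(l_j),
$$
where $a_j(l):=\|f_j\|_{L^\infty_tL^2_\xi(\mathbb{R}_+\times(l+[0,1)^d))}$.

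Finally, I would multiply by $2^{\tilde{s}_0|l|}$ and sum. Writing $\tilde{l}:=l_1+\cdots+l_m$, the overlap restriction gives $|l|\geq|\tilde{l}|-dm$, and since $\tilde{s}_0\leq0$,
$$
2^{\tilde{s}_0|l|}\;\leq\;2^{-\tilde{s}_0 dm}\cdot 2^{\tilde{s}_0|\tilde{l}|}\;=\;C_m\prod_{j=1}^m 2^{\tilde{s}_0|l_j|},
$$
using the additivity $|\tilde{l}|=|l_1|+\cdots+|l_m|$ from the first-octant support. Combining the overlap bound (at most $C_m$ values of $l$ per fixed $(l_1,\dots,l_m)$) with the product estimate gives
$$
\sum_{l\in\mathbb{Z}^d}2^{\tilde{s}_0|l|}\|f_1*\cdots*f_m\|_{L^\infty_tL^2_\xi(\mathbb{R}_+\times(l+[0,1)^d))}\;\lesssim_m\;\sum_{l_1,\dots,l_m\in\mathbb{Z}^d_I}\prod_{j=1}^m 2^{\tilde{s}_0|l_j|}a_j(l_j),
$$
and the right-hand side factors as the desired product. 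No step is genuinely hard; the only point requiring care is the bookkeeping of the finite overlap constant $C_m$, which is harmless because the lemma is stated up to an unspecified multiplicative constant.
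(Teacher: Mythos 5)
Your proposal is correct and follows essentially the same route as the paper's proof: decompose each factor into unit-cube pieces, use the finite-overlap/support restriction $|l-l_1-\cdots-l_m|\lesssim m$, exploit the additivity of $|\cdot|$ on the first octant so that $2^{\tilde{s}_0|l|}\lesssim_m\prod_j 2^{\tilde{s}_0|l_j|}$, and close with Young's inequality ($m-1$ factors in $L^1_\xi$, one in $L^2_\xi$) together with $\|g\|_{L^1}\leq\|g\|_{L^2}$ on unit cubes. The only differences are cosmetic bookkeeping of the overlap constant.
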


\begin{proof}
By Young's and H\"older's inequality, we have
\begin{align*}
\sum_{l\in \mathbb{Z}^d}  & 2^{\tilde{s}_0|l|}  \|f_1*...*f_m\|_{L^\infty_t L^2_\xi(\mathbb{R}_+ \times (l+[0,1)^d))}  \\
&\leq\sum_{l\in \mathbb{Z}^d_I } 2^{\tilde{s}_0|l|} \sum_{l_1,...,l_m \in \mathbb{Z}^d_I  }\left\| \chi_{l+[0,1)^d} \left((\chi_{l_1+[0,1)^d} f_1)*  \cdots *( \chi_{l_m+[0,1)^d} f_m)\right) \right\|_{L_{t}^\infty L_\xi^2 }\\
&\leq\sum_{l\in \mathbb{Z}^d_I } 2^{\tilde{s}_0|l|} \sum_{l_1,...,l_m \in \mathbb{Z}^d_I  }\| (\chi_{l_1+[0,1)^d} f_1)*  \cdots *( \chi_{l_m+[0,1)^d} f_m) \|_{L_{t}^\infty L_\xi^2 } \chi_{\{|l-l_1-...-l_m|\leq m\}}\\ 			
&\lesssim   \sum_{l_1,...,l_m \in \mathbb{Z}^d_I  } 2^{\tilde{s}_0|l_1+...+l_m|}\| (\chi_{l_1+[0,1)^d} f_1)*  \cdots *( \chi_{l_m+[0,1)^d} f_m) \|_{L_{t}^\infty L_\xi^2 } \\ 			
&\leq  \sum_{l_1,...,l_m \in \mathbb{Z}^d_I  } \prod^{m-1}_{j=1} 2^{\tilde{s}_0|l_j| }\|  \chi_{l_j+[0,1)^d} f_j\|_{L_{t}^\infty L_\xi^1} 2^{\tilde{s}_0|l_m| } \| \chi_{l_m+[0,1)^d} f_m  \|_{L_{t}^\infty L_\xi^2 }\\
&\leq  \sum_{l_1,...,l_m \in \mathbb{Z}^d_I  } \prod^{m}_{j=1} 2^{\tilde{s}_0|l_j| }\|  \chi_{l_j+[0,1)^d} f_j\|_{L_{t}^\infty L_\xi^2}.			
\end{align*}
We have the result, as desired.
\end{proof}
Using the result of (i) of Theorem \ref{1DNLHglobal}, we see that $u\in  \widetilde{L}^\infty (0,\infty; \, E^{s_0, \sigma}_{2,2} )$ and
$$
\left\|u^{(j)}-u \right\|_{\widetilde{L}^\infty (0,\infty; \, E^{s_0, \sigma}_{2,2} )} \to 0, \ \ k\to \infty.
$$
For any $\tilde{s}_0<s_0$,  by H\"older's inequality, we have
$$
\|u\|_{\widetilde{L}^\infty (0,\infty; \, E^{\tilde{s}_0,0}_{2,1} )} \leq \|\{2^{(\tilde{s}_0 -s_0)|k|} \langle k\rangle^{-\sigma}\}\|_{\ell^2} \|u\|_{\widetilde{L}^\infty (0,\infty; \, E^{{s}_0,\sigma}_{2,2} )} \lesssim  \|u\|_{\widetilde{L}^\infty (0,\infty; \, E^{ {s}_0,\sigma}_{2,2} )}
$$
So, by Lemma \ref{errornonlinearestimate} we obtain that
\begin{align*}
 \sum_{l\in \mathbb{Z}^d} 2^{\tilde{s}_{0}|l|}  & \left\|(v-  v^{j-1})  \sum^{m-1}_{\ell =0}(*^{m-1-\ell }v)* (*^\ell  v^{j-1}) \right\|_{L^\infty_t L^2_\xi(\mathbb{R}_+ \times (l+[0,1)^d))} \\
\lesssim &  \left( \sum_{l\in \mathbb{Z}^d}  2^{\tilde{s}_0|l|}  \left( \|v^{j-1}\|_{L^\infty_t L^2_\xi(\mathbb{R}_+ \times (l+[0,1)^d))}+ \|v\|_{L^\infty_t L^2_\xi(\mathbb{R}_+ \times (l+[0,1)^d))} \right) \right)^{m-1}\\
& \times  \sum_{l\in \mathbb{Z}^d_I }   2^{\tilde{s}_0|l|} \|v- v^{j-1}\|_{L^\infty_t L^2_\xi(\mathbb{R}_+ \times (l+[0,1)^d))} \\
\lesssim &  \left( \|u^{(j)}\|_{\widetilde{L}^\infty (0,\infty; \, E^{s_0, \sigma}_{2,2} )} +  \|u\|_{\widetilde{L}^\infty (0,\infty; \, E^{s_0, \sigma}_{2,2} )} \right)^{m-1}\\
& \times  \sum_{l\in \mathbb{Z}^d_I }   2^{\tilde{s}_0|l|} \|v- v^{j-1}\|_{L^\infty_t L^2_\xi(\mathbb{R}_+ \times (l+[0,1)^d))}
\end{align*}
Combining the above estimate with \eqref{iteseq2}, we have
\begin{align} \label{iteseq3}
\sum_{l\in \mathbb{Z}^d} & 2^{\tilde{s}_0|l|} \|v- v^{j}\|_{L^\infty_t L^2_\xi(\mathbb{R}_+ \times (l+[0,1)^d))}   \nonumber\\
&  \leq  \frac{C}{|j|^2} \sum_{l\in \mathbb{Z}^d}  2^{\tilde{s}_0|l|}   \|(v-  v^{j-1}) \|_{L^\infty_t L^2_\xi(\mathbb{R}_+ \times (l+[0,1)^d))}
\end{align}
By induction, we have
\begin{align} \label{iteseq3}
\|u^{(j)}(t)-u(t)\|_{E^{\tilde{s}_0}} \leq  \sum_{l\in \mathbb{Z}^d} & 2^{\tilde{s}_0|l|} \|v- v^{j}\|_{L^\infty_t L^2_\xi(\mathbb{R}_+ \times (l+[0,1)^d))}
  \leq  \frac{C^j}{( j !)^2}.
\end{align}
This proves (iii) of Theorem \ref{1DNLHglobal}.

\section{SLH for $m\geq 1+4/d$} \label{sectNLHhD}

In the case $m\geq 1+4/d$, we can improve the condition ${\rm supp}\, \widehat{u}_0 \subset   \{\xi\in \mathbb{R}^d_I: \,   |\xi(j)|_\infty \geq \varepsilon_0\}$ for some $\varepsilon_0>0$ by a weak condition ${\rm supp}\, \widehat{u}_0 \subset \mathbb{R}^d_I$.  For example, let $d=1$ and
\begin{align}
    u_0 &= A\,   \frac{d}{dx}\left(\delta (x) + \frac{2 \, \mathrm{ i}}{x}\right),  \  A\in \mathbb{C},  \label{example3}
\end{align}
we have $\widehat{u}_0 =  i \xi  \chi_{\{\xi: \,\xi \geq 0\}}$ and ${\rm supp} \widehat{u}_0 \subset [0,\infty)$. Comparing \eqref{example3} with \eqref{example1}, we see that the imaginary part of $u_0$ in \eqref{example1} changes  signs for infinite times and in \eqref{example3} never changes signs.

\subsection{Scaling in the critical cases}

Recalling that for the initial data $u_0 \in E^s_\sigma$ with ${\rm supp} \, \widehat{u}_0 \subset \{\xi:\, |\xi| \geq \varepsilon_0\}$, we have shown that $\|u_{0,\lambda}\|_{E^s_\sigma}$ vanishes as $\lambda\to \infty$ by using the exponential decay of $2^{s\lambda \varepsilon_0}$. However, in the critical case $\sigma=d/2-2/(m-1)$, it still holds $\lim_{\lambda\to \infty}\|u_{0,\lambda}\|_{E^s_\sigma}=0$ if ${\rm supp} \, \widehat{u}_0 \subset \mathbb{R}^d_I$.

\begin{lemma} \label{scalingcrit}
Let $s<0$, $\sigma \geq 0$. Denote $f_\lambda = \lambda^a f(\lambda\, \cdot)$. Assume that $\sigma+a = d/2$. Then for any $f\in E^s_\sigma$, we have
\begin{align}
\lim_{\lambda\to +\infty} \|f_\lambda\|_{E^s_\sigma} =0. \label{scalingcrit1}
\end{align}
\end{lemma}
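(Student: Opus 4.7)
The plan is to reduce the claim to dominated convergence after a direct change of variables. First I would compute the Fourier transform $\widehat{f_\lambda}(\xi) = \lambda^{a-d}\widehat{f}(\xi/\lambda)$ and write out the $E^s_\sigma$-norm:
\begin{align*}
\|f_\lambda\|_{E^s_\sigma}^2 = \lambda^{2a-2d}\int_{\mathbb{R}^d} \langle \xi\rangle^{2\sigma} 2^{2s|\xi|}\,|\widehat{f}(\xi/\lambda)|^2\,d\xi.
\end{align*}
Substituting $\xi = \lambda\eta$ gives
\begin{align*}
\|f_\lambda\|_{E^s_\sigma}^2 = \lambda^{2a-d}\int_{\mathbb{R}^d} \langle \lambda\eta\rangle^{2\sigma} 2^{2s\lambda|\eta|}\,|\widehat{f}(\eta)|^2\,d\eta.
\end{align*}

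Next I would exploit the scaling hypothesis $\sigma+a=d/2$ together with $\sigma\geq 0$. Since $\sigma\geq 0$ and $\lambda\geq 1$, one has $\langle \lambda\eta\rangle^{2\sigma}\leq \lambda^{2\sigma}\langle\eta\rangle^{2\sigma}$, so
\begin{align*}
\|f_\lambda\|_{E^s_\sigma}^2 \leq \lambda^{2a+2\sigma-d}\int_{\mathbb{R}^d} \langle \eta\rangle^{2\sigma} 2^{2s\lambda|\eta|}\,|\widehat{f}(\eta)|^2\,d\eta = \int_{\mathbb{R}^d} \langle \eta\rangle^{2\sigma} 2^{2s\lambda|\eta|}\,|\widehat{f}(\eta)|^2\,d\eta,
\end{align*}
where $2a+2\sigma-d=0$ is exactly the critical scaling relation.

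Finally I would apply dominated convergence to the right-hand side. For any $\lambda\geq 1$, because $s<0$ one has $2^{2s\lambda|\eta|}\leq 2^{2s|\eta|}$, so the integrand is dominated by $\langle \eta\rangle^{2\sigma}2^{2s|\eta|}|\widehat{f}(\eta)|^2$, which belongs to $L^1(\mathbb{R}^d)$ thanks to $f\in E^s_\sigma$. On the other hand, for each $\eta\neq 0$ we have $2^{2s\lambda|\eta|}\to 0$ as $\lambda\to+\infty$ (again since $s<0$), and $\{0\}$ is a null set. Lebesgue's dominated convergence theorem then yields $\|f_\lambda\|_{E^s_\sigma}\to 0$, which is \eqref{scalingcrit1}.

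There is no real obstacle here — the only subtle point is that the polynomial weight $\langle\lambda\eta\rangle^{2\sigma}$ might seem to blow up with $\lambda$, but the inequality $\langle\lambda\eta\rangle\leq\lambda\langle\eta\rangle$ together with the critical scaling identity $2a+2\sigma=d$ absorbs precisely this factor, after which the exponential weight $2^{2s\lambda|\eta|}$ drives the integral to zero by monotone decay in $\lambda$ for every fixed nonzero $\eta$.
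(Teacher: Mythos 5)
Your proof is correct and follows essentially the same route as the paper: the same change of variables, the same use of $\langle\lambda\eta\rangle\le\lambda\langle\eta\rangle$ (valid since $\sigma\ge 0$, $\lambda\ge 1$) combined with the critical relation $2a+2\sigma=d$ to absorb the polynomial growth, leaving the integral $\int\langle\eta\rangle^{2\sigma}2^{2s\lambda|\eta|}|\widehat f(\eta)|^2\,d\eta$. The only cosmetic difference is in the last step, where you invoke dominated convergence while the paper carries out the equivalent argument by hand, splitting into $\{|\xi|\le\delta_0\}$ (small by absolute continuity of the integral) and $\{|\xi|>\delta_0\}$ (killed by the factor $2^{s(\lambda-1)\delta_0}$); both are valid.
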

\begin{proof}
Noticing that $\sigma+a =d/2$, We have for any $\lambda>1$
\begin{align}
  \|f_\lambda\|_{E^s_\sigma}   = \lambda^{-d+a} \|\langle\xi\rangle^{\sigma} 2^{s|\xi|} \widehat{f}(\xi/\lambda)\|_2
     \leq    \|\langle \xi\rangle^{\sigma} 2^{s\lambda|\xi|} \widehat{f} \|_2.
   \label{scalingcrit2}
\end{align}
Since $f\in E^s_\sigma$, we have
$$
\lim_{\delta\to 0}   \|\chi_{\{|\xi| \leq \delta \}} \langle \xi\rangle^{\sigma} 2^{s |\xi|} \widehat{f} \|_2 =0.
$$
Hence, for any $\varepsilon>0$, there exists $\delta_0 >0$ such that
\begin{align}
    \|\chi_{\{|\xi| \leq \delta_0 \}} \langle \xi\rangle^{\sigma} 2^{s\lambda|\xi|} \widehat{f} \|_2 \leq  \|\chi_{\{|\xi| \leq \delta_0 \}} \langle \xi\rangle^{\sigma} 2^{s |\xi|} \widehat{f} \|_2 <\varepsilon/2.
   \label{scalingcrit3}
\end{align}
On the other hand,
\begin{align*}
    \|\chi_{\{|\xi| > \delta_0 \}} \langle \xi\rangle^{\sigma} 2^{s\lambda|\xi|} \widehat{f} \|_2 \leq 2^{s(\lambda-1)\delta_0} \|\chi_{\{|\xi| \leq \delta_0 \}} \langle \xi\rangle^{\sigma} 2^{s |\xi|} \widehat{f} \|_2  \leq 2^{s(\lambda-1)\delta_0} \|f\|_{E^s_\sigma}.
\end{align*}
Choosing $\lambda_0 $ sufficiently large such that
$$
 2^{s(\lambda-1)\delta_0} \|f\|_{E^s_\sigma} < \varepsilon/2,
$$
we have for any $\lambda >\lambda_0$,
\begin{align}
    \|\chi_{\{|\xi| > \delta_0 \}} \langle \xi\rangle^{\sigma} 2^{s\lambda|\xi|} \widehat{f} \|_2 < \varepsilon/2.
   \label{scalingcrit4}
\end{align}
In view of \eqref{scalingcrit2}--\eqref{scalingcrit4}, we have for $\lambda>\lambda_0$,
\begin{align}
\|f_\lambda\|_{E^s_\sigma}  \leq \|\chi_{\{|\xi| \leq \delta_0 \}} \langle \xi\rangle^{\sigma} 2^{s\lambda|\xi|} \widehat{f} \|_2 +  \|\chi_{\{|\xi| > \delta_0 \}} \langle \xi\rangle^{\sigma} 2^{s\lambda|\xi|} \widehat{f} \|_2 < \varepsilon,
   \label{scalingcrit5}
\end{align}
which implies the result.
\end{proof}

\subsection{Result in the case $m\geq 1+4/d$}

For convenience, we denote
\begin{align}
\|u\|_{X^{s,\sigma}} = & \|u\|_{ \widetilde{L}^\infty (0,\infty; E^{s,\sigma}_{2,2}(\mathbb{Z}^d_I\setminus \{0\})) \cap \widetilde{L}^1(0,\infty; E^{s,\sigma+2}_{2,2}(\mathbb{Z}^d_I\setminus \{0\}) ) }  \nonumber \\
& + \|\Box_0 u\|_{L^\infty_t L^2_x \cap L^m_t L^{2m}_x (\mathbb{R}_+\times \mathbb{R}^d) }, \label{criticalspace}
\end{align}

\begin{thm} \label{hDNLHglobal}
		Let $d\geq 1$, $f(u)=u^m$, $m\in  \mathbb{N}$,  $m\geq 1+4/d$, $\sigma=d/2-2/(m-1)$. Let $s<0$, $u_0\in E^{s}_{\sigma}$ with $\mathrm{supp}~\widehat{u}_0\subset \mathbb{R}^d_{I} $. Then there exists $s_0:= s_0(m, u_0) \leq s$ such that SLH \eqref{model} has a unique solution
$u\in X^{s_0,\sigma}$ satisfying the following equivalent integral equation:
\begin{align} \label{hDNLHI}
u(t) = e^{t\Delta} u_0 + \int_0^t e^{(t-\tau)\Delta}u(\tau)^m~d\tau.
\end{align}
Moreover, $u \in C([0,\infty); E^{s_0}_{\sigma}) $.
\end{thm}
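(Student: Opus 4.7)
The approach closely parallels Theorem~\ref{1DNLHglobal}, with the critical-scaling device of Lemma~\ref{scalingcrit} replacing the exponential-decay argument used there. Starting from $u_0 \in E^s_\sigma$ with $\sigma = d/2 - 2/(m-1)$ and $\operatorname{supp}\widehat{u}_0 \subset \mathbb{R}^d_I$, I would set $u_{0,\lambda}(x) = \lambda^{2/(m-1)} u_0(\lambda x)$. Because $\sigma + 2/(m-1) = d/2$, Lemma~\ref{scalingcrit} (applied with $a = 2/(m-1)$) yields $\|u_{0,\lambda}\|_{E^s_\sigma} \to 0$ as $\lambda \to \infty$, even though $\widehat{u}_0$ need not vanish near the origin; this is exactly the point where Theorem~\ref{1DNLHglobal} required $\operatorname{supp}\widehat{u}_0$ to be separated from the origin. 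Pick $\lambda$ large enough to bring the scaled data below the smallness threshold of the contraction, and then set $s_0 = \lambda s$.

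The fixed-point argument is run in $X^{s,\sigma}$ from \eqref{criticalspace}, whose norm splits into a high-frequency block (indexed by $k \in \mathbb{Z}^d_I \setminus \{0\}$) and a low-frequency block $\Box_0 u$. For the high-frequency block, Corollary~\ref{linearestimateapp} and the multilinear estimate of Lemma~\ref{nonlinearestimate} (or its zero-dropped variant in Remark~\ref{appendnonlinearest}) apply essentially verbatim, since their hypotheses only use $|k|\ge 1$. For the low-frequency block, $\|\Box_0 e^{t\Delta}u_0\|_{L^\infty_tL^2_x}\le\|\Box_0 u_0\|_{L^2_x}$ is immediate, whereas $\|\Box_0 e^{t\Delta}u_0\|_{L^m_tL^{2m}_x}$ is obtained from a heat-Strichartz estimate: for $m = 1+4/d$ the pair $(m,2m)$ is exactly admissible, and for $m > 1+4/d$ one exploits that $\Box_0$ localizes the frequency to the unit cube so that Bernstein lets us replace $L^{2m}_x$ by $L^2_x$ on the frequency side while the $L^p$--$L^q$ smoothing of the heat kernel combined with Christ--Kiselev closes the $L^m_t$ integrability.

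The nonlinear estimate is closed by expanding $u^m = ((I-\Box_0)u + \Box_0 u)^m$ and handling each multinomial piece separately. Pure-high-frequency terms are fed into Remark~\ref{appendnonlinearest} to land in $\widetilde L^1(0,\infty;E^{s,\sigma}_{2,2}(\mathbb{Z}^d_I\setminus\{0\}))$; pure-low-frequency terms are estimated by $\|(\Box_0 u)^m\|_{L^1_tL^2_x}\le \|\Box_0 u\|_{L^m_tL^{2m}_x}^m$ via Hölder, after a Bernstein step on the frequency-localized factors; mixed terms are interpolated between these two using the orthogonality relation \eqref{orthog}, which, once one index equals $0$ and the others are in $\mathbb{Z}^d_I$, still forces a tight support constraint. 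The resulting bound $\|\mathcal{T}u\|_{X^{s,\sigma}}\lesssim \|u_0\|_{E^s_\sigma}+\|u\|_{X^{s,\sigma}}^m$ together with the analogous difference estimate gives a contraction on a small ball in $X^{s,\sigma}$. A scaling-at-the-norm argument in the spirit of Lemma~\ref{scalingXs} then transfers the solution back to the original data, yielding $u\in X^{s_0,\sigma}$ with $s_0=\lambda s$, and the continuity $u\in C([0,\infty);E^{s_0}_\sigma)$ is obtained exactly as in Step~1 of the proof of Theorem~\ref{1DNLHglobal}.

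The principal obstacle is the handling of $\Box_0 u$ in $L^m_tL^{2m}_x$. At high frequencies the exponential factor $e^{-t|k|^2}$ does all the work, but at zero frequency the heat semigroup offers no temporal decay, so the $L^m_t$ integrability must come from a genuine Strichartz-type bound. For $m=1+4/d$ this is the textbook endpoint case; for $m>1+4/d$ the pair $(m,2m)$ is non-admissible and one must invoke Bernstein on the frequency-localized block $\Box_0$ together with a careful $L^p$--$L^q$ smoothing plus $TT^*$/Christ--Kiselev argument. Keeping track of all mixed low/high-frequency contributions in the $m$-linear expansion so that they simultaneously fit into the high-frequency $\widetilde L^1(E^{s,\sigma}_{2,2}(\mathbb{Z}^d_I\setminus\{0\}))$ norm and the low-frequency $L^1_tL^2_x$ norm is the most delicate bookkeeping in the argument.
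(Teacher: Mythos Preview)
Your overall strategy matches the paper's proof: reduce to small data via Lemma~\ref{scalingcrit}, run the contraction in $X^{s,\sigma}$ with the high/low frequency split, then rescale back. However, you are overcomplicating two points where the paper's argument is cleaner.

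First, the low-frequency linear estimate is not the ``principal obstacle'' you describe. The paper simply invokes Lemma~\ref{lowerfreqcont}, whose hypothesis $2/\gamma \le d(1/2-1/p)$ with $(\gamma,p)=(m,2m)$ reads $2/m \le d(m-1)/(2m)$, i.e.\ exactly $m\ge 1+4/d$. No Christ--Kiselev or $TT^*$ machinery is needed; the estimate is a direct consequence of the $L^p$--$L^q$ smoothing of $e^{t\Delta}$ combined with Bernstein on the unit cube, valid uniformly for all $m\ge 1+4/d$.

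Second, your full multinomial expansion of $u^m$ with separate treatment of ``mixed terms'' is unnecessary. The paper exploits the positive-frequency support to observe that $\Box_0(\Box_{k_1}u\cdots\Box_{k_m}u)=0$ unless every $k_j=0$, so $\Box_0(u^m)=\Box_0((\Box_0 u)^m)$ exactly, and Hölder gives $\|\Box_0(u^m)\|_{L^1_tL^2_x}\le\|\Box_0 u\|_{L^m_tL^{2m}_x}^m$ in one line. For the high-frequency norm, the paper splits $u^m$ into just two pieces, $(\Box_0 u)^m$ and $u^m-(\Box_0 u)^m$: the first has $\Box_k$-support only for $|k|_\infty\le m$ and is estimated directly, while the second is precisely what Remark~\ref{appendnonlinearest} handles. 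No interpolation between low and high contributions is required.
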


Theorem \ref{hDNLHglobal} needs several remarks.
\begin{itemize}

\item[(i)] Let $u_{0,\lambda}$ be as in \eqref{u0lambda}.  From the proof of Lemma \ref{scalingcrit}, we see that $\lim_{\lambda\to \infty}\|u_{0,\lambda}\|_{E^s_\sigma}=0$ and the vanishing process of $\|u_{0,\lambda}\|_{E^s_\sigma}$ depends on both $u_0$ and its norm. It follows that $s_0 \leq s$ also depends on $u_0$ and its norm in $E^s_\sigma$. However, we can take $s=s_0$ if $u_0\in E^{s}_{\sigma}$  with $\mathrm{supp}~\widehat{u}_0\subset \mathbb{R}^d_{I}$  is sufficiently small.

\item[(iii)] By the same reason as in Theorem \ref{1DNLHglobal}, condition ${\rm supp}\, \widehat{u}_0 \subset \mathbb{R}^d_I$ is necessary for the existence of the global solutions.

\end{itemize}

\subsection{Proof of Theorem \ref{hDNLHglobal} }

{\it Step 1.} We prove the result for sufficiently small initial data $u_0 \in E^s_\sigma$.   We need a lemma to control the lower frequency part of the solution, see \cite{WaHuHaGu2011}.

\begin{lemma} \label{lowerfreqcont}
Let $2\leq p <\infty$, $2/\gamma \leq d(1/2-1/p)$. Then we have
\begin{align}
\|\Box_0 e^{t\Delta} u_0\|_{L^\gamma_t L^p_x (\mathbb{R}_+\times \mathbb{R}^d)} & \lesssim \|\Box_0 u_0\|_2, \label{lowerfreq1} \\
\left\|\Box_0 \int^t_0 e^{(t-\tau)\Delta} f(\tau) d\tau \right\|_{L^\gamma_t L^p_x (\mathbb{R}_+\times \mathbb{R}^d)} &  \lesssim \left\|\Box_0 f \right\|_{L^1_t L^2_x (\mathbb{R}_+\times \mathbb{R}^d)}
\end{align}
\end{lemma}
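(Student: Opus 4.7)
The key observation is that $\Box_0$ projects onto Fourier support in the unit cube $[0,1)^d$, so Bernstein's inequality lets us swap $L^p_x$ for any smaller $L^{\tilde p}_x$ (pointwise in $t$) at the cost of only a constant. My plan is to pick $\tilde p$ so that $(\gamma,\tilde p)$ is scaling-critical for the heat Strichartz inequality, invoke that estimate, and then dominate $L^p_x$ by $L^{\tilde p}_x$ via Bernstein.

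First I would define $\tilde p$ by the critical relation $2/\gamma + d/\tilde p = d/2$, equivalently $1/\tilde p = 1/2 - 2/(d\gamma)$. The hypotheses $p<\infty$ and $2/\gamma \leq d(1/2-1/p)$ together force $\gamma>4/d$, so $\tilde p$ is finite; moreover, the inequality $2/\gamma \leq d(1/2-1/p)$ translates precisely to $\tilde p\leq p$. Since $\widehat{\Box_0 e^{t\Delta}u_0}$ is supported in $[0,1)^d$, Bernstein gives pointwise in $t$
\begin{equation*}
\|\Box_0 e^{t\Delta}u_0\|_{L^p_x}\lesssim \|\Box_0 e^{t\Delta}u_0\|_{L^{\tilde p}_x},
\end{equation*}
and taking $L^\gamma_t$ norms, applying the classical scaling-critical heat Strichartz estimate $\|e^{t\Delta}g\|_{L^\gamma_t L^{\tilde p}_x(\mathbb{R}_+\times\mathbb{R}^d)}\lesssim \|g\|_2$ to $g=\Box_0 u_0$ (cf.\ \cite{WaHuHaGu2011}) yields the homogeneous estimate. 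For the inhomogeneous piece I would use Minkowski's integral inequality together with a time translation: writing
\begin{equation*}
\Box_0\int_0^t e^{(t-\tau)\Delta}f(\tau)\,d\tau = \int_0^\infty \chi_{\{\tau<t\}}\,e^{(t-\tau)\Delta}\Box_0 f(\tau)\,d\tau,
\end{equation*}
one brings the outer $L^\gamma_t L^p_x$-norm inside the $d\tau$ integral, substitutes $t\mapsto t-\tau$, and applies the homogeneous bound to each $\Box_0 f(\tau)$, producing the factor $\int_0^\infty \|\Box_0 f(\tau)\|_2\,d\tau = \|\Box_0 f\|_{L^1_t L^2_x}$.

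I expect the main obstacle to be the endpoint case $2/\gamma = d(1/2-1/p)$. A naive direct approach, combining Bernstein with the pointwise bound $\|\Box_0 e^{t\Delta}u_0\|_{L^p_x}\lesssim \min(1, t^{-d(1/2 - 1/p)/2})\|\Box_0 u_0\|_2$ (which itself follows from Hausdorff--Young together with $\|e^{-t|\xi|^2}\chi_{[0,1)^d}\|_{L^2_\xi}\lesssim t^{-d/4}$ for large $t$), only yields integrability in $L^\gamma_t$ under the \emph{strict} inequality, since the tail diverges logarithmically at equality. Routing the argument through the scaling-critical Strichartz estimate, which is proved by the standard $TT^*$-plus-Hardy--Littlewood--Sobolev machinery, is precisely what bypasses this difficulty; the assumption $p<\infty$ (equivalently $\tilde p<\infty$) additionally keeps us away from any genuinely bad Keel--Tao-type endpoint.
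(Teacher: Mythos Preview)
Your argument is correct. The paper does not actually prove this lemma; it merely cites the monograph \cite{WaHuHaGu2011} for it. Your route---Bernstein on the unit cube (which is exactly the paper's inequality \eqref{LqLpest}) to pass from $L^p_x$ down to the scaling-critical $L^{\tilde p}_x$, then the standard critical heat Strichartz estimate, then Minkowski plus time translation for the Duhamel term---is precisely the standard proof one finds in such references, and it is sound.

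One minor imprecision worth noting: your claim that ``$p<\infty$ and $2/\gamma\le d(1/2-1/p)$ together force $\gamma>4/d$'' fails when $\gamma=\infty$, which the lemma implicitly allows (e.g.\ the case $p=2$ forces $\gamma=\infty$, and the paper does use the $L^\infty_t L^2_x$ endpoint in \eqref{criticalspace21}). But in that case $1/\tilde p=1/2$, so $\tilde p=2$ is finite and the required homogeneous bound is just the $L^2$-contractivity of $e^{t\Delta}$; nothing in your argument breaks. Your endpoint discussion is also on target: the scaling-critical heat Strichartz inequality via $TT^*$ and Hardy--Littlewood--Sobolev is exactly what handles the equality case $2/\gamma=d(1/2-1/p)$, and the restriction $p<\infty$ keeps $\tilde p<\infty$ so no genuine endpoint issue arises.
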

Let us consider the mapping
\begin{align*}
\mathcal{T}: u(t) \to  e^{t\Delta}u_0+\int_0^t e^{(t-\tau)\Delta}u^m(\tau)~d\tau
\end{align*}
in the space
\begin{align*}
		\mathcal{D}= \{u: \, {\rm supp }\, \widehat{u(t)} \subset \mathbb{R}^d_I, \   \|u\|_{X^{s,\sigma}}  \leq \delta\},
	\end{align*}
where $\|u\|_{X^{s,\sigma}}$  is defined in \eqref{criticalspace}.  By Lemma \ref{lowerfreqcont},
\begin{align}
\|\Box_0 \mathcal{T} u \|_{L^\infty_t L^{2}_x \cap L^m_t L^{2m}_x }
\leq \,  &  \, \|\Box_0 e^{t\Delta} u_0\|_{ L^\infty_t L^{2}_x \cap L^m_t L^{2m}_x  } \nonumber\\
 & \,  +  \left\|\Box_0 \int^t_0 e^{(t-\tau)\Delta} u(\tau)^m  d\tau \right\|_{L^\infty_t L^{2}_x \cap L^m_t L^{2m}_x } \nonumber\\		
 \lesssim   \,  & \|  u_0\|_{2}     +  \left\|\Box_0 (u^m )\right\|_{L^1_t L^{2}_x }. 	
 \end{align} 	
Using $\sum_k \Box_k =I$, we have
\begin{align}
  \left\|\Box_0 (u^m )\right\|_{L^1_t L^{2}_x} \leq \sum_{k_1,...,k_m \in \mathbb{Z}^d_I} \|\Box_0(\Box_{k_1}u \Box_{k_2}u...\Box_{k_m} u) \|_{L^1_t L^{2}_x}.	
 \end{align}
Since ${\rm supp} \,  \widehat{u(t)}  \subset \mathbb{R}^d_I$, we see that
$$
\Box_0(\Box_{k_1}u \Box_{k_2}u...\Box_{k_m} u) =0
$$
if there exists $k_j$ such that $|k_j|_\infty \geq 1$. Hence
\begin{align}
  \left\|\Box_0 (u^m )\right\|_{L^1_t L^{2}_x} =   \|\Box_0(\Box_{0}u  )^m  \|_{L^1_t L^{2}_x} \leq \| (\Box_{0}u  )^m  \|_{L^1_t L^{2}_x} \leq \| \Box_{0}u   \|^m_{L^m_t L^{2m}_x} .	
 \end{align}
It follows that
\begin{align} \label{criticalspace21}
\|\Box_0 \mathcal{T} u \|_{L^\infty_t L^{2}_x \cap L^m_t L^{2m}_x }		
& \lesssim     \| \Box_0 u_0\|_{2}     +    \| \Box_{0}u   \|^m_{L^m_t L^{2m}_x }   \nonumber\\
 &  \lesssim     \|  u_0\|_{E^s_\sigma}     +    \| u \|^m_{X^{s,\sigma}}. 	
 \end{align}
Next, by Corollary \ref{linearestimateapp}, we can estimate
\begin{align}
  \|\mathcal{T}  u &\|_{ \widetilde{L}^\infty (0,\infty; E^{s,\sigma}_{2,2}(\mathbb{Z}^d_I\setminus \{0\})) \cap \widetilde{L}^1(0,\infty; E^{s,\sigma+2}_{2,2}(\mathbb{Z}^d_I\setminus \{0\}) ) }  \nonumber \\
 \leq \  &  \|e^{t\Delta} u_0\|_{  \widetilde{L}^\infty (0,\infty; E^{s,\sigma}_{2,2}(\mathbb{Z}^d_I\setminus \{0\})) \cap \widetilde{L}^1(0,\infty; E^{s,\sigma+2}_{2,2}(\mathbb{Z}^d_I\setminus \{0\}) ) } \nonumber\\
 & \,  +  \left\| \int^t_0 e^{(t-\tau)\Delta} u(\tau)^m  d\tau \right\|_{ \widetilde{L}^\infty (0,\infty; E^{s,\sigma}_{2,2}(\mathbb{Z}^d_I\setminus \{0\})) \cap \widetilde{L}^1(0,\infty; E^{s,\sigma+2}_{2,2}(\mathbb{Z}^d_I\setminus \{0\}) ) } \nonumber\\
\lesssim  &  \    \|  u_0\|_{E^s_\sigma}  +  \|u^m \|_{\widetilde{L}^1(0,\infty; E^{s,\sigma}_{2,2}(\mathbb{Z}^d_I\setminus \{0\}) )}.
 \label{criticalspace10}
\end{align}
One has that
\begin{align}
  \|u^m \| _{\widetilde{L}^1(0,\infty; E^{s,\sigma}_{2,2}(\mathbb{Z}^d_I\setminus \{0\}) )}
& \leq  \left(\sum_{k\in \mathbb{Z}^d\setminus \{0\}} 2^{2s|k|} \langle k\rangle^{2\sigma} \|\Box_k (\Box_0 u)^m\|_{L^1_tL^2_x} \right)^{1/2} \nonumber\\
& \quad\quad +  \left(\sum_{k\in \mathbb{Z}^d\setminus \{0\}} 2^{2s|k|} \langle k\rangle^{2\sigma} \|\Box_k (u^m-(\Box_0 u)^m)\|_{L^1_tL^2_x} \right)^{1/2}\nonumber\\
& : =I+II.
 \label{criticalspace11}
\end{align}
Noticing that $\Box_k (\Box_0 u)^m \neq 0$  implies that $|k|_\infty \leq m$, we have
\begin{align}
  I &  \lesssim  m^{d/2+  \sigma }  \| \Box_0 u \|^m_{L^m_tL^{2m}_x}.  \label{criticalspace12}
\end{align}
Applying Remark \ref{appendnonlinearest},
\begin{align}
II  \leq C_m \|u  \|^{m-1}_{\widetilde{L}^\infty(0,\infty; E^{s,\sigma}_{2,2}  )}  \|u  \| _{\widetilde{L}^1(0,\infty; E^{s,\sigma+2}_{2,2}(\mathbb{Z}^d_I\setminus \{0\}) )}.
 \label{criticalspace15}
\end{align}
It is easy to see that
\begin{align}
 \|u  \|_{\widetilde{L}^\infty(0,\infty; E^{s,\sigma}_{2,2}  )}  \leq  \|\Box_0 u\|_{L^\infty_tL^2_x} +  \|u  \|_{\widetilde{L}^\infty(0,\infty; E^{s,\sigma}_{2,2}(\mathbb{Z}^d_I \setminus \{0\})  )}\leq \|u\|_{X^{s,\sigma}}.
 \label{criticalspace16}
\end{align}
By \eqref{criticalspace15} and \eqref{criticalspace16},
\begin{align}
II  \leq C_m \|u\|^m_{X^{s,\sigma}}.
 \label{criticalspace18}
\end{align}	
Then, by \eqref{criticalspace10},  \eqref{criticalspace11}, \eqref{criticalspace12} and \eqref{criticalspace18},
\begin{align}
  \|\mathcal{T}  u  \|_{ \widetilde{L}^\infty (0,\infty; E^{s,\sigma}_{2,2}(\mathbb{Z}^d_I\setminus \{0\})) \cap \widetilde{L}^1(0,\infty; E^{s,\sigma+2}_{2,2}(\mathbb{Z}^d_I\setminus \{0\}) ) }
\lesssim  &  \    \|  u_0\|_{E^s_\sigma}  +  C_m \|u \|^m_{X^{s,\sigma} }.
 \label{criticalspace19}
\end{align}
By \eqref{criticalspace21} and \eqref{criticalspace19}
\begin{align}
  \|\mathcal{T}  u  \|_{X^{s,\sigma}}
\lesssim  &  \    \|  u_0\|_{E^s_\sigma}  + C_m \|u \|^m_{X^{s,\sigma} }.
 \label{criticalspace20}
\end{align}
So, it follows from \eqref{criticalspace20} that SLH \eqref{model} has a unique solution in $X^{s,\sigma}$  if $\|  u_0\|_{E^s_\sigma}$ is sufficiently small.

{\it Step 2.} We consider the large initial data in $E^s_\sigma$.  The idea is the same as in the proof of Theorem \ref{1DNLHglobal}. By Lemma \ref{scalingcrit}, $u_{0,\lambda} = \lambda^{2/(m-1)} u(\lambda\, \cdot)$ satisfies
$$
\lim_{\lambda\to \infty} \|u_{0,\lambda}\|_{E^s_\sigma} =0.
$$
So, for any $u_0\in E^s_\sigma$, we can choose a $\lambda \gg 1$ such that
$$
  \|u_{0,\lambda}\|_{E^s_\sigma}  \ll 1.
$$
It follows that
\begin{align} \label{hDNLHIlambda}
v(t) = e^{t\Delta} u_{0,\lambda} + \int_0^t e^{(t-\tau)\Delta}v(\tau)^m~d\tau.
\end{align}
has a unique solution $u_\lambda \in X^{s,\sigma}$. Taking $u= \lambda^{-2/(m-1)} u(t/\lambda^2, x/\lambda)$, we can repeat the procedures as in the proof of Theorem \ref{1DNLHglobal} to get the result of Theorem \ref{hDNLHglobal}.

\section{SLH with an exponential nonlinearity} \label{sectNLHexp}

The heat equation with  exponential nonlinearities has been studied in \cite{Fu2018,GaVa1997,LaTz1993,Va1999},  where the global existence and blowup behavior for the radial solutions were considered in the real-valued cases.
We will study the Cauchy problem for the heat equation with an exponential nonlinearity
	\begin{equation}\label{NLHexp}
		 u_t-\Delta u = e^u-1 , \ \  u(0,x) = u_0(x),
	\end{equation}
where $u(t,x)$ is a complex-valued function of $(t,x) \in [0,\infty)\times \mathbb{R}^d$, $u_0 \in {E}^s_{\sigma}$ with $s<0$ and   $\widehat{u}_0$ is supported in the first octant and away from the unit ball, say
\begin{align} \label{supportexp}
{\rm supp} \, \widehat{u}_0 \subset \{\xi \in  \mathbb{R}^d : \ \xi_i \geq 0, \ i=1,...,d, \ |\xi|_\infty \geq 2\}.
\end{align}
The main result of this section is the following

\begin{thm} \label{NLHexpr}
Let $s<0$, $\sigma \geq d/2$,  and $u_0 \in E^s_{\sigma}$ satisfy \eqref{supportexp} Then there exists $s_0 \leq s$ such that \eqref{NLHexp} has a unique solution
$$
u\in C(\mathbb{R}_+, E^{s_0}_{\sigma}) \cap \widetilde{L} ^\infty (0,\infty; E^{s_0,\sigma}_{2,2}) \cap  \widetilde{L} ^1 (0,\infty; E^{s_0,\sigma+2}_{2,2})
$$
in the sense that $u$ satisfies the equivalent integral equation
 \begin{align} \label{NLHexpi}
 u(t) = e^{t(I+\Delta)} u_0 + \int^t_0  e^{(t-\tau)(I+\Delta)} (e^{u(\tau)}-u(\tau)-1) d\tau.
\end{align}
\end{thm}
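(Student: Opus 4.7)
The plan is to rewrite the equation so that a linear $-u$ term is absorbed into the semigroup: writing $u_t - \Delta u = e^u - 1$ as
\[
u_t - (I+\Delta)u \;=\; e^u - 1 - u \;=\; \sum_{m\geq 2} \frac{u^m}{m!},
\]
Duhamel's formula yields exactly \eqref{NLHexpi}. The first step is a version of Lemma \ref{linearestimate} and Corollary \ref{linearestimateapp} for the modified semigroup $e^{t(I+\Delta)}$. Because its symbol is $e^{t(1-|\xi|^2)}$ and $\widehat{u}_0$ is supported in $\{|\xi|_\infty\geq 2\}$, on this region $|\xi|^2\geq 4$ forces $1-|\xi|^2\leq -|\xi|^2/2$. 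Thus $e^{t(I+\Delta)}$ restricted to this frequency set decays like $e^{-t|\xi|^2/2}$, and the proofs of Lemmas \ref{linearestimate}, \ref{interlinearestimate} go through verbatim (with a harmless extra constant). A simple but crucial second step: since the first octant $\mathbb{R}^d_I$ is closed under addition and $|\xi+\eta|_\infty\geq |\xi|_\infty$ for $\xi,\eta\in \mathbb{R}^d_I$, the support condition $\mathrm{supp}\,\widehat{u}(t)\subset \{|\xi|_\infty\geq 2\}\cap\mathbb{R}^d_I$ is preserved by the $u^m$ nonlinearity and by Duhamel iteration, so the restricted linear estimates apply throughout.

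Next comes the nonlinear estimate. Fix $\sigma\geq d/2$ and set $X=\widetilde{L}^\infty(0,\infty;E^{s_0,\sigma}_{2,2})\cap \widetilde{L}^1(0,\infty;E^{s_0,\sigma+2}_{2,2})$. By Lemma \ref{nonlinearestimate} together with Lemma \ref{interlinearestimate},
\[
\bigl\|u^m\bigr\|_{\widetilde{L}^1(0,\infty;E^{s_0,\sigma}_{2,2})}
\;\leq\; C^m m^{m/2}\,\|u\|^m_{\widetilde{L}^m(0,\infty;E^{s_0,\sigma+2/m}_{2,2})}
\;\leq\; C^m m^{m/2}\,\|u\|_X^m .
\]
Summing the exponential series and using Stirling's estimate $m!\gtrsim (m/e)^m\sqrt{m}$, one gets
\[
\Bigl\|\sum_{m\geq 2}\tfrac{1}{m!}u^m\Bigr\|_{\widetilde{L}^1(0,\infty;E^{s_0,\sigma}_{2,2})}
\;\leq\; \sum_{m\geq 2}\frac{(Ce\|u\|_X)^m}{m^{m/2}\sqrt{m}} \;=:\;G(\|u\|_X),
\]
and $G$ is entire with $G(r)=O(r^2)$ as $r\to 0$. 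An analogous estimate for $u^m-v^m=(u-v)\sum_{j=0}^{m-1}u^j v^{m-1-j}$ (treating the $m$-fold products by Lemma \ref{nonlinearestimate} again) gives a contraction bound of the form $H(\|u\|_X,\|v\|_X)\,\|u-v\|_X$ with $H$ also $O(r)$ as $r\to 0$. This is precisely why the hypothesis $\sigma\geq d/2$ enters: only then does the constant $C_m$ in Lemma \ref{nonlinearestimate} grow slowly enough ($C^m m^{m/2}$) to be summed against $1/m!$.

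Finally, I reduce arbitrary $u_0\in E^s_\sigma$ to small data. Because $\widehat{u}_0$ is supported in $\{|\xi|\geq |\xi|_\infty\geq 2\}$ and $s_0\leq s<0$,
\[
\|u_0\|_{E^{s_0}_\sigma} \;=\; \bigl\|\langle\xi\rangle^\sigma 2^{(s_0-s)|\xi|}2^{s|\xi|}\widehat{u}_0\bigr\|_{L^2} \;\leq\; 2^{2(s_0-s)}\|u_0\|_{E^s_\sigma},
\]
which tends to $0$ as $s_0\to -\infty$. Choose $s_0\leq s$ so small that $\|u_0\|_{E^{s_0}_\sigma}$ satisfies the threshold for the fixed-point argument in the ball $\{u\in X:\|u\|_X\leq \delta\}$ of $X$; the Banach fixed point theorem in the restricted space (with support $\{|\xi|_\infty\geq 2\}\cap\mathbb{R}^d_I$) gives a unique solution of \eqref{NLHexpi}. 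Continuity $u\in C([0,\infty);E^{s_0}_\sigma)$ follows from the nonlinear bound applied on $(t_0,t_1)$ as in Step 1 of the proof of Theorem \ref{1DNLHglobal}.

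The main obstacle is the convergence of the infinite Taylor series of $e^u-1-u$ in the resolution norm. This forces both the quantitative form of the multilinear constant $C_m\leq C^m m^{m/2}$ for $\sigma\geq d/2$ (rather than a cruder $m^m$-type bound which would defeat summation with $1/m!$) and the careful handling of the difference $u^m-v^m$ in the contraction step; the rest reduces to applying the linear machinery already in place for $e^{t\Delta}$ to the shifted semigroup on a frequency region where the shift is harmless.
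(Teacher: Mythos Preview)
Your argument is correct and takes a genuinely different route from the paper's.  The paper proceeds by \emph{physical scaling}: it sets $u_\lambda(t,x)=u(\lambda^2 t,\lambda x)$, which solves the modified equation with semigroup $H_\lambda(t)=e^{t(\lambda^2 I+\Delta)}$ and a $\lambda^2$ factor in front of the nonlinearity; it then proves linear estimates for $H_\lambda$ that are uniform in $\lambda$ (their Lemmas~4.2--4.3 and Corollary~4.4), uses Lemma~\ref{scaling} to make $\|u_{0,\lambda}\|_{E^s_\sigma}$ small while keeping $s$ \emph{fixed}, runs the contraction in $X_{s,\sigma,\lambda}$, and finally rescales back via Lemma~\ref{scalingXs} to land in $E^{s_0}_\sigma$ with $s_0=\lambda s$.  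You instead keep the equation and the semigroup $e^{t(I+\Delta)}$ untouched and gain smallness by lowering $s_0$ directly, exploiting $\|u_0\|_{E^{s_0}_\sigma}\le 2^{2(s_0-s)}\|u_0\|_{E^s_\sigma}$ on the support $\{|\xi|_\infty\ge 2\}$.  This is more elementary: it avoids the family $H_\lambda$, the $\lambda^2$ bookkeeping in the fixed-point estimate, and the rescaling Lemma~\ref{scalingXs} altogether.  What the paper's route buys is that the multilinear estimate (Lemma~\ref{nonlinearestimate}) is invoked at the \emph{fixed} parameter $s$, so there is no question about how its constant behaves as the exponential weight varies.

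That last point is the one place your write-up is a bit too quick.  As stated, Lemma~\ref{nonlinearestimate} gives $C_m\le C^m m^{m/2}$ with $C$ depending on $s,d,\sigma$; you apply it at $s_0$ and must be sure the $s_0$-dependence does not defeat the gain $2^{2(s_0-s)}$.  Inspecting the proof one sees that, thanks to the first-octant support (so $|k_1+\dots+k_m|=\sum|k_j|$ exactly) and the orthogonality constraint $l_j=k_j-\sum_i(k_i)_j\in\{0,\dots,m-1\}$, the weight factor satisfies $2^{s_0|k|}\le \prod_j 2^{s_0|k_j|}$ with \emph{no} loss; hence $C$ can in fact be taken independent of $s_0$ (the paper's displayed $2^{|s|dm}$ in \eqref{6nonest} is a harmless over-estimate).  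Once you note this, your contraction argument closes exactly as written.
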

Recalling that
  \begin{align} \label{NLHexp1}
  e^{u}   = \sum^\infty_{m=0} \frac{u^m}{m!},
\end{align}
we see that Fujita's critical and subcritical powers are contained in the Taylor expansion of $e^{u}$ in the cases $m=2,3$ for $d=1$, and $m=2$ for $d=2$. Moreover, the linear part $u$ contained in the Taylor's expansion of $e^u$ plays a bad role which prevents the global existence of solutions. Indeed, let us observe the semi-group $e^{t(I+\Delta)}$, its lower frequency part has no time-decay and  one easily sees that
$$
e^{t/2} \|P_{\leq 1/2} u_0\|_2 \leq  \|e^{t(I+\Delta)} P_{\leq 1/2}u_0\|_2  \leq e^t \|u_0\|_2, \ \ P_{\leq 1/2} = \mathscr{F}^{-1} \chi_{|\xi|\leq 1/2} \mathscr{F}.
$$
So, the lower frequency part of the free solutions tends to infinity as $t\to \infty$.   This is why we assume that the support set of the Fourier transform of initial data is away from  the unit ball in \eqref{supportexp}.

\subsection{Linear Estimates}

Now, let us make a scaling to \eqref{NLHexp}. Denote
$$
u_\lambda (t,x) = u(\lambda^2 t, \lambda x).
$$
If $u$ is a solution of \eqref{NLHexp}, we easily see that $u_\lambda$ solves
	\begin{equation}\label{NLHexpscaling}
		 u_t-\Delta u = \lambda^2 (e^u-1)  , \ \  u(0,x) = u_0(\lambda x).
	\end{equation}
\eqref{NLHexpscaling} can be rewritten as
	\begin{equation*}
		 u_t-(\lambda^2+ \Delta) u - \lambda^2 (e^u-u-1) =0 , \ \  u(0,x) = u_0(\lambda x),
	\end{equation*}
which is essentially equivalent to the following integral equation
 \begin{align} \label{NLHexpiscaling}
 u(t) = e^{t(\lambda^2 I+\Delta)} u_0(\lambda\, \cdot) +  \lambda^2 \int^t_0  e^{(t-\tau)(\lambda^2 I+\Delta)} (e^{u(\tau)}-u(\tau)-1) d\tau.
\end{align}
We can first solve \eqref{NLHexpiscaling}. For the sake of convenience, we write
$$
H_\lambda (t) : =  e^{t(\lambda^2 I+\Delta)} = \mathscr{F}^{-1} e^{-t(|\xi|^2 -\lambda^2  )} \mathscr{F}.
$$
Let us consider the estimate of $H_\lambda(t)$. As indicated in the above, $H_\lambda(t)$ has no time-decay for $|\xi| \leq \lambda$.

\begin{lemma} \label{Lpmultiplier}
Let $1\leq p \leq \infty$, $\lambda \gg  1$. Assume that $k\in \mathbb{Z}^d$ and $|k|_\infty \geq 2\lambda$. Then we have
\begin{align}
\label{estimatep}
\|\Box_k H_\lambda (t) u_0 \|_p \leq C e^{-ct|k|^2} \|\Box_k u_0\|_p,
\end{align}
where $C$ is independent of $\lambda$ and $k$.
\end{lemma}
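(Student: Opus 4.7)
The plan is to write $\Box_k H_\lambda(t)u_0$ as a convolution of $\Box_k u_0$ with a $(k,\lambda,t)$-dependent kernel, and to show that the kernel has $L^1$-norm bounded by $Ce^{-ct|k|^2}$, after which the lemma follows from Young's inequality.

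Let $\eta_0\in C^\infty_c(\mathbb{R}^d)$ be equal to $1$ on $[0,1)^d$ and supported in a slightly larger box, say $(-\tfrac12,\tfrac32)^d$, and set $\eta_k(\xi)=\eta_0(\xi-k)$. Since $\mathrm{supp}\,\widehat{\Box_k u_0}\subset k+[0,1)^d$, where $\eta_k\equiv 1$, we have
\[
\Box_k H_\lambda(t)u_0 \;=\; K_{k,\lambda}(t,\cdot)\,*\,\Box_k u_0, \qquad K_{k,\lambda}(t,\cdot)=\mathscr{F}^{-1}\!\bigl(e^{-t(|\xi|^2-\lambda^2)}\eta_k(\xi)\bigr).
\]
By Young's inequality, the lemma reduces to showing $\|K_{k,\lambda}(t,\cdot)\|_{L^1}\le Ce^{-ct|k|^2}$ uniformly in $t\ge 0$ and in admissible $k,\lambda$.

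Translating the frequency via $\xi=k+\zeta$ only introduces a modulation, and thus does not change the $L^1$-norm, so it suffices to control $\|\mathscr{F}^{-1}(e^{-tR_\lambda(\zeta)}\eta_0(\zeta))\|_{L^1}$, where $R_\lambda(\zeta):=|k+\zeta|^2-\lambda^2$. The crucial geometric input is the lower bound $R_\lambda\ge c_0|k|^2$ on $\mathrm{supp}\,\eta_0$ with an absolute constant $c_0>0$: for $|k|_\infty\ge 2\lambda$ and $\lambda\gg 1$, one has $|k|\gg\sqrt d$, so $|k+\zeta|^2\ge |k|^2-C\sqrt d\,|k|\ge |k|^2/2$ on $\mathrm{supp}\,\eta_0$, while $\lambda^2\le |k|_\infty^2/4\le |k|^2/4$. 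Hence $R_\lambda(\zeta)\ge |k|^2/4$ on that support, and we factor
\[
e^{-tR_\lambda(\zeta)}\eta_0(\zeta)\;=\;e^{-tc_0|k|^2}\,g(\zeta), \qquad g(\zeta):=e^{-t\tilde R(\zeta)}\eta_0(\zeta),\quad \tilde R:=R_\lambda-c_0|k|^2\ge 0,
\]
with $c_0=1/4$.

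To bound $\|\mathscr{F}^{-1}g\|_{L^1}$ I would apply Proposition \ref{Nikolskii}. Clearly $\|g\|_2\le\|\eta_0\|_2\le C$. Since $\partial_{\zeta_i}\tilde R=2(k_i+\zeta_i)$, $\partial^2_{\zeta_i}\tilde R=2$ and all higher $\zeta$-derivatives of $\tilde R$ vanish, the Faà di Bruno formula together with $|k+\zeta|\le|k|+O(1)$ on $\mathrm{supp}\,\eta_0$ yields
\[
\bigl|\partial^L_{\zeta_i}e^{-t\tilde R}\bigr|\lesssim e^{-t\tilde R}(1+t|k|)^L,
\]
so after the Leibniz expansion with $\eta_0$ we get $\|\partial^L_{\zeta_i}g\|_2\lesssim (1+t|k|)^L$. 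Taking $L=[d/2]+1$ in Proposition \ref{Nikolskii} gives $\|\mathscr{F}^{-1}g\|_{L^1}\lesssim (1+t|k|)^{d/2}$.

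The last step is to absorb this polynomial factor by half of the exponential prefactor. A short elementary check shows $(1+t|k|)^{d/2}e^{-tc_0|k|^2/2}\le C$ uniformly in $t\ge 0$ and $|k|\ge 1$ (treat $t|k|\le 1$ trivially; for $t|k|\ge 1$ set $y=tc_0|k|^2/2$ and use $(t|k|)^{d/2}e^{-y}\le C_d y^{d/2}|k|^{-d/2}e^{-y}\le C_d$). Combining,
\[
\|K_{k,\lambda}(t,\cdot)\|_{L^1}\le Ce^{-tc_0|k|^2}(1+t|k|)^{d/2}\le Ce^{-(c_0/2)t|k|^2},
\]
and Young's inequality then yields $\|\Box_k H_\lambda(t)u_0\|_p\le Ce^{-ct|k|^2}\|\Box_k u_0\|_p$ with $c=c_0/2>0$ independent of $\lambda$ and $k$. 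The main obstacle I expect is the quantitative geometric step: securing the lower bound $R_\lambda\ge c_0|k|^2$ with a strictly positive, $\lambda$- and $d$-independent constant $c_0$, since it is precisely the hypothesis $|k|_\infty\ge 2\lambda$ with $\lambda\gg 1$ that creates the room needed to beat the polynomial growth coming from differentiating $e^{-t\tilde R}$.
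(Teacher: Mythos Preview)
Your proof is correct and follows essentially the same route as the paper: write $\Box_k H_\lambda(t)$ as convolution with a kernel, translate by $k$, invoke Proposition~\ref{Nikolskii} to bound the $L^1$-norm of the kernel, and absorb the resulting growth into the exponential decay furnished by $|k|_\infty\ge 2\lambda$. The only cosmetic difference is that the paper factors out exactly $e^{t(\lambda^2-|k|^2)}$ and bounds the remaining multiplier crudely by $e^{Ct|k|}$, whereas you factor out the slightly weaker $e^{-c_0 t|k|^2}$ so that the leftover $e^{-t\tilde R}$ is bounded by $1$ and only a polynomial $(1+t|k|)^{d/2}$ survives; both remainders are then swallowed by the main exponential in the same way.
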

{\bf Proof}.  Let $\sigma$ be a smooth cut-off function verifying $\sigma(\xi)=1$ for $\xi \in [0,1]^d$ and $\sigma(\xi) =0$ if $\xi \not\in [-1/4, 5/4]^d$. Put $\sigma_k = \sigma (\cdot-\xi)$.  By Young's inequality,
$$
\|\Box_k H_\lambda (t) u_0 \|_p \leq  \left\|\mathscr{F}^{-1} \left(\sigma_k e^{t(\lambda^2-|\xi|^2)}\right ) \right\|_1  \|\Box_k u_0\|_p.
$$
Using the translation invariance,
$$
  \left\|\mathscr{F}^{-1} \left(\sigma_k e^{t(\lambda^2-|\xi|^2)}\right ) \right\|_1  = e^{t(\lambda^2-|k|^2)} \left\|\mathscr{F}^{-1} \left(\sigma  e^{-t( |\xi|^2 + 2k\xi) }\right ) \right\|_1.
$$
Noticing that ${\rm supp }\,\sigma$ is compact, in view of Proposition \ref{Nikolskii} we have
$$
   \left\|\mathscr{F}^{-1} \left(\sigma  e^{-t( |\xi|^2 + 2k\xi) }\right ) \right\|_1 \lesssim e^{Ct|k|}.
$$
Noticing that $|k|_\infty \geq  2\lambda$, one has the result as desired. $\hfill\Box$

\begin{lemma} \label{TSestimate}
Let $1\leq \gamma, \gamma_1, p \leq \infty$, $\gamma_1\leq \gamma$,  $\lambda \gg  1$. Assume that
\begin{align} \label{RIlambda}
{\rm supp}\, \widehat{u}_0, {\rm supp}\, \widehat{f(t)} \subset \mathbb{R}^{d}_{I,\lambda}:= \{\xi\in \mathbb{R}^d_I: |\xi|_\infty \geq 2\lambda\}.
\end{align}
 Then we have
\begin{align}
\|\Box_k H_\lambda (t) u_0 \|_{L^\gamma_t L^p_x } & \leq  C |k|^{-2/\gamma} \|\Box_k u_0\|_p,    \label{TSestimate1}\\
\left\|\int^t_0 \Box_k H_\lambda (t-\tau) f(\tau)d\tau \right\|_{L^\gamma_t L^p_x } & \leq C |k|^{-2/\gamma -2/\gamma'_1} \left\|  \Box_k   f \right\|_{L^{\gamma'_1}_t L^p_x },  \label{TSestimate2}
\end{align}
where $C$ is independent of $\lambda$ and $k$.
\end{lemma}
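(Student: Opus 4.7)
The plan is to reduce both bounds in Lemma \ref{TSestimate} to scalar convolution inequalities in the time variable, with Lemma \ref{Lpmultiplier} supplying the only nontrivial ingredient. The crucial observation is that the support hypothesis \eqref{RIlambda} forces every nonzero block $\Box_k H_\lambda(t) u_0$ (and likewise $\Box_k H_\lambda(t-\tau) f(\tau)$) to satisfy $|k|_\infty \geq 2\lambda$, which is precisely the regime in which Lemma \ref{Lpmultiplier} delivers the pointwise-in-$t$ estimate
\begin{align*}
\|\Box_k H_\lambda(t) g\|_p \leq C\, e^{-ct|k|^2}\|\Box_k g\|_p,
\end{align*}
with constants independent of $\lambda$ and $k$.

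For \eqref{TSestimate1} I would simply apply this pointwise bound with $g=u_0$ and take the $L^\gamma_t(\mathbb{R}_+)$ norm, using the elementary computation $\|e^{-ct|k|^2}\|_{L^\gamma_t(\mathbb{R}_+)}\lesssim |k|^{-2/\gamma}$ (with the obvious $L^\infty$ convention when $\gamma=\infty$). For \eqref{TSestimate2}, applying the same pointwise bound inside the integral and Minkowski's inequality in $\tau$ reduces the problem to the scalar convolution
\begin{align*}
\left\|\Box_k\int_0^t H_\lambda(t-\tau) f(\tau)\,d\tau\right\|_p \leq C\int_0^t e^{-c(t-\tau)|k|^2}\|\Box_k f(\tau)\|_p\,d\tau.
\end{align*}
Young's convolution inequality in time with exponent $r$ determined by $1/r=1+1/\gamma-1/\gamma_1$ — admissible, i.e.\ $r\in[1,\infty]$, exactly because $\gamma_1\leq\gamma$ — then produces the claimed $|k|^{-2/r}=|k|^{-2/\gamma-2/\gamma'_1}$ weight against the appropriate $L^{\gamma_1}_tL^p_x$ norm of $\Box_k f$. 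This is mechanically the same argument used in the proof of Lemma \ref{linearestimate}, with $e^{t\Delta}$ replaced by $H_\lambda(t)$.

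The main point — and the reason the lemma is nontrivial — is not the convolution calculus itself but the uniformity of all constants in $\lambda$. That uniformity has already been absorbed into Lemma \ref{Lpmultiplier}, whose hidden work consists in translating the frequency window away from the origin and balancing the growth $e^{t\lambda^2}$ against the decay $e^{-t|k|^2}$ using $|k|_\infty\geq 2\lambda$; once the pointwise bound is granted, the remaining proof of Lemma \ref{TSestimate} does not see $\lambda$ at all. Consequently the only real care needed is bookkeeping: verifying that the support hypothesis \eqref{RIlambda} is preserved under $H_\lambda$ (immediate, since $e^{-t(|\xi|^2-\lambda^2)}$ is a Fourier multiplier) and checking the Young exponent range. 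I do not foresee any deeper obstacle.
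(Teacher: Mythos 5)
Your proof is correct and follows essentially the same route as the paper's: apply the pointwise decay of Lemma \ref{Lpmultiplier} (valid since \eqref{RIlambda} forces $|k|_\infty\geq 2\lambda$ on every nonzero block), take the $L^\gamma_t$ norm for \eqref{TSestimate1}, and use Minkowski plus Young's convolution inequality in time for \eqref{TSestimate2}. Your exponent bookkeeping, which places $\Box_k f$ in $L^{\gamma_1}_t L^p_x$ against the weight $|k|^{-2/\gamma-2/\gamma_1'}$, is the internally consistent reading (matching \eqref{basicest3}); the $L^{\gamma_1'}_t$ appearing in the printed statement of \eqref{TSestimate2} looks like a typo that you have implicitly corrected.
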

{\bf Proof}. Taking the $L^\gamma$ norm in both sides of \eqref{estimatep}, we immediately have \eqref{TSestimate1}.  By \eqref{estimatep},
\begin{align}
\left\|\int^t_0 \Box_k H_\lambda (t-\tau) f(\tau)d\tau \right\|_{p} \leq   \int^t_0  e^{-c(t-\tau)|k|^2} \|\Box_k f(\tau)\|_p d\tau.  \label{TSestimate3}
\end{align}
Taking  $L^\gamma$ norm in both sides of \eqref{TSestimate3} and using Young's inequality, we obtain \eqref{TSestimate2}. $\hfill\Box$

\begin{cor}\label{linearestimate}
Let $s\leq 0$, $\sigma \in \mathbb{R}$, $1\leq \gamma_1\leq \gamma \leq \infty$.  Suppose that $\mathrm{supp}~ \mathscr{F}_x f, \, {\rm supp}\, \widehat{u}_0\subset \mathbb{R}^d_{I,\lambda}$. Then there exists $C>0$ which is independent of $\lambda\gg 1$ such that
		\begin{align}
			\|H_\lambda (t) u_0\|_{ \widetilde{L} ^\gamma (0,\infty; E^{s,\sigma+2/\gamma}_{2,2})  } & \leq C \|u_0\|_{E^s_\sigma},  \label{expbasicest1} \\
			\left\|\int_0^t H_\lambda (t-\tau) f(\tau)~d\tau\right\|_{\widetilde{L} ^\gamma (0,\infty; E^{s,\sigma+2/\gamma}_{2,2}) } &\leq C \|f\|_{\widetilde{L} ^{\gamma_1} (0,\infty; E^{s,\sigma-2/\gamma'_1}_{2,2}) }.  \label{expbasicest2}
		\end{align}
	\end{cor}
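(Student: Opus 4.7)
The plan is to deduce this corollary as a direct consequence of the frequency-localized bounds of Lemma~\ref{TSestimate} by multiplying through by the $E^s_\sigma$ weight and taking an $\ell^2$ sum in $k$. The whole game is to check that the algebraic factors of $|k|$ produced by the semigroup regularization can be converted into the $\langle k\rangle$-weights that appear in the $\widetilde{L}^\gamma(0,\infty;E^{s,\sigma+2/\gamma}_{2,2})$ norm, and that the support hypothesis ${\rm supp}\,\widehat{u}_0,\, {\rm supp}\,\mathscr{F}_xf\subset \mathbb{R}^d_{I,\lambda}$ ensures the bounds are applicable.

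For \eqref{expbasicest1}, I first observe that since $\widehat{u}_0$ is supported in $\mathbb{R}^d_{I,\lambda}=\{|\xi|_\infty\geq 2\lambda\}$, the only indices $k\in\mathbb{Z}^d$ with $\Box_k u_0\neq 0$ satisfy $|k|_\infty\geq 2\lambda-1$; for $\lambda\gg 1$ this is equivalent to $|k|_\infty\geq 2\lambda$ up to a harmless shift, so Lemma~\ref{TSestimate} with $p=2$ yields
$$
\|\Box_k H_\lambda(t)u_0\|_{L^\gamma_t L^2_x}\ \leq\ C\,|k|^{-2/\gamma}\|\Box_k u_0\|_2.
$$
Now multiply both sides by $2^{s|k|}\langle k\rangle^{\sigma+2/\gamma}$, use that $\langle k\rangle\sim |k|$ for $|k|_\infty\gtrsim\lambda\gg 1$ so that $\langle k\rangle^{2/\gamma}|k|^{-2/\gamma}\lesssim 1$ uniformly in $\lambda$, and take the $\ell^2(\mathbb{Z}^d)$ norm. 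By the equivalent characterization \eqref{equivnormes} of $E^s_\sigma$ together with the definition \eqref{resolutionspace} of the resolution space, this gives exactly \eqref{expbasicest1}.

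For \eqref{expbasicest2}, the procedure is entirely analogous starting from \eqref{TSestimate2} with $p=2$:
$$
\Bigl\|\Box_k\int_0^t H_\lambda(t-\tau)f(\tau)d\tau\Bigr\|_{L^\gamma_t L^2_x}\ \leq\ C\,|k|^{-2/\gamma-2/\gamma'_1}\,\|\Box_k f\|_{L^{\gamma_1}_t L^2_x}.
$$
Multiplying by $2^{s|k|}\langle k\rangle^{\sigma+2/\gamma}$ and noting that the exponent shift
$\langle k\rangle^{\sigma+2/\gamma}|k|^{-2/\gamma-2/\gamma'_1}\lesssim \langle k\rangle^{\sigma-2/\gamma'_1}$
is valid for $|k|\gtrsim\lambda\gg 1$, an $\ell^2$-summation together with \eqref{equivnormes} produces precisely $\|f\|_{\widetilde{L}^{\gamma_1}(0,\infty;E^{s,\sigma-2/\gamma'_1}_{2,2})}$ on the right, as required.

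The main technical content is already contained in Lemmas~\ref{Lpmultiplier} and~\ref{TSestimate}: the decisive step is that for $|k|_\infty\geq 2\lambda$ one has $|k|^2-\lambda^2\geq \tfrac{3}{4}|k|^2$, so the multiplier $e^{t(\lambda^2-|\xi|^2)}\sigma_k$ behaves—after translating by $k$ and invoking Proposition~\ref{Nikolskii}—just like the usual heat multiplier, with a constant independent of $\lambda$. Once that $\lambda$-uniformity is in hand, the passage from the pointwise-in-$k$ estimate to the summed resolution-space estimate is a routine Plancherel-and-triangle-inequality manipulation, with no genuine obstacle beyond the bookkeeping of exponents $\gamma,\gamma_1$ and the weight $\langle k\rangle^{\sigma}$.
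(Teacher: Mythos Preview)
Your proposal is correct and follows exactly the same route as the paper: take $p=2$ in Lemma~\ref{TSestimate}, multiply the resulting pointwise-in-$k$ bounds by $2^{s|k|}\langle k\rangle^{\sigma+2/\gamma}$, and sum in $\ell^2$. You have spelled out the support-set bookkeeping and the $|k|\sim\langle k\rangle$ conversion more carefully than the paper's one-line proof, but the argument is identical.
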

\begin{proof}
Taking $p=2$, multiplying $2^{s|k|}\langle k\rangle^{\sigma+2/\gamma}$  and then equipping $\ell^2$ norms in both sides of  \eqref{TSestimate1} and   \eqref{TSestimate2}, we have the results, as desired.
\end{proof}

\subsection{Proof of Theorem \ref{NLHexpr} }

 Let us consider the scaling of initial data $u_{0, \lambda}= u_0(\lambda\, \cdot)$. By Lemma \ref{scaling} we see that
$$
\|u_{0, \lambda} \|_{E^s_\sigma} \leq C 2^{ s (\lambda-1)} \lambda^{\sigma -d/2} \|u_0\|_{E^s_\sigma} \to 0, \ \ \lambda\to +\infty.
$$
Let $\lambda \in \mathbb{N}$ be a sufficiently large number. Similar to  the above argument, Taking $\delta= 2C 2^{ s (\lambda-1)} \lambda^{\sigma -d/2} \|u_0\|_{E^s_\sigma} $ and
$$
X_{s, \sigma, \lambda} =\left\{u: \, \|u\|_{X_{s, \sigma, \lambda}} =\|u\|_{\widetilde{L} ^\infty (\mathbb{R}_+; E^{s,\sigma}_{2,2}) \cap  \widetilde{L} ^1 (\mathbb{R}_+; E^{s,\sigma+2}_{2,2})} \leq \delta, \ {\rm supp}\, \widehat{u(t)} \subset  \mathbb{R}^d_{I,\lambda} \right\},
$$
where $\mathbb{R}^d_{I,\lambda}$ is as in \eqref{RIlambda}. Let us consider the mapping
 \begin{align*}
\mathscr{T}_\lambda : u(t) \to e^{t( \lambda^2 I+\Delta)} u_{0,\lambda}  + \lambda^2 \int^t_0  e^{(t-\tau)(\lambda^2 I+\Delta)} (e^{u(\tau)}-u(\tau)-1) d\tau.
\end{align*}
Applying Corollary \ref{linearestimateapp}, Lemmas \ref{interlinearestimate} and \ref{nonlinearestimate}, we can get for any $u\in X_{s,\lambda}$, $\sigma\geq d/2$,
 \begin{align*}
\|\mathscr{T}_\lambda u \|_{X_{s,\sigma,\lambda}} &
\leq C \|u_0\|_{E^s_{\sigma}}  +  \lambda^2 C\|e^{u}-u -1 \|_{\widetilde{L} ^1 (\mathbb{R}_+; E^{s,\sigma}_{2,2}) } \\
&  \leq  C \|u_{0,\lambda}\|_{E^s_{\sigma}}  +  C\lambda^2 \sum^\infty_{m=2} \frac{1}{m!} \|u^m\|_{\widetilde{L} ^1 (\mathbb{R}_+; E^{s,\sigma}_{2,2}) }  \\
&  \leq  C \|u_{0,\lambda}\|_{E^s_{\sigma}}  +  C\lambda^2 \sum^\infty_{m=2} \frac{ C^m m^{m/2} }{m!} \|u\|^m_{\widetilde{L} ^m (\mathbb{R}_+; E^{s,\sigma+2/m}_{2,2}) }  \\
 &  \leq  C \|u_{0,\lambda}\|_{E^s_{\sigma}}  +  C \lambda^2 \sum^\infty_{m=2} \frac{  C^m m^{m/2}}{m!} \|u\|^m_{X_{s,\sigma,\lambda}}\\
 &  \leq  \delta/2  + C \lambda^2 \delta^2 \sum^\infty_{m=2} \frac{  C^m  m^{m/2}}{m!} \delta^{m-2}.
\end{align*}
If $\lambda $ is large enough, then we have $\delta \leq 1$ and
 \begin{align*}
    \sum^\infty_{m=2} \frac{ C^m  m^{m/2} }{m!} \delta^{m-2} \leq C.
\end{align*}
Moreover, we can assume that
$$
C^2 \lambda^2 \delta     \leq \frac{1}{2}.
$$
Hence, we have
 \begin{align*}
\|\mathscr{T}_\lambda u \|_{X_{s,\sigma,\lambda}}
     \leq \delta.
\end{align*}
Similarly
 \begin{align*}
\|\mathscr{T}_\lambda u - \mathscr{T}_\lambda v \|_{X_{s,\sigma, \lambda}}
    \leq  \frac{1}{2}  \| u - v \|_{X_{s,\sigma, \lambda}}.
\end{align*}
Hence, $\mathscr{T}_\lambda: X_{s,\sigma,\lambda} \to X_{s,\sigma, \lambda}$ has a fixed point $u_\lambda \in X_{s,\sigma, \lambda}$, which solves
 \begin{align*}
  u(t) = e^{t( \lambda^2 I+\Delta)} u_{0,\lambda}  + \lambda^2 \int^t_0  e^{(t-\tau)(\lambda^2 I+\Delta)} (e^{u(\tau)}-u(\tau)-1) d\tau.
\end{align*}
Taking
$$
u(t,x) = u_\lambda \left(\frac{t}{\lambda^2}, \frac{x}{\lambda}\right),
$$
by the scaling argument we see that $u$ satisfies
 \begin{align*}
  u(t) = e^{t(  I+\Delta)} u_{0 }  +   \int^t_0  e^{(t-\tau)( I+\Delta)} (e^{u(\tau)}-u(\tau)-1) d\tau.
\end{align*}
Following Lemma \ref{scalingXs} we have 	
$$
u\in \widetilde{L} ^\infty (0,\infty; E^{s_0, \sigma}_{2,2}) \cap  \widetilde{L} ^1 (0,\infty; E^{s_0,\sigma+2}_{2,2}), \ \ s_0=\lambda s.
$$
$u\in C(\mathbb{R}_+,  E^{s_0}_{\sigma})$ can be obtained by following the same way as in Theorem \ref{1DNLHglobal} and the details are omitted.

\section{Initial data in $E^s_{2,1}$} \label{SLHmodspaces}

Denote $Q_k : = k+[0,1)^d$ for $k \in \mathbb{Z}^d$, Let $s<0$. Define (cf. \cite{FeGrLiWa2021})
$$
E^{s}_{2,1}:=\left\{f\in \mathscr{S}_1':  \|f\|_{E^{s}_{2,1}} = \sum_{k\in \mathbb{Z}^d} 2^{s|k|} \| \widehat{f} \|_{L^2(Q_k) }<\infty \right\}.
$$
By the inclusion $l^1\subset l^2$, we see that $E^s_{2,1} \subset E^s_{\sigma}$ for any $\sigma \leq 0$.  By \eqref{equivnormes} and Cauchy-Schwarz inequality,  $E^{s}_{\sigma} \subset E^s_{2,1}$ for any $\sigma >d/2$. It follows that
\begin{align} \label{embeddingmod}
E^{s}_{\sigma_1} \subset E^s_{2,1} \subset E^s_{\sigma_2}, \ \ \forall \ \sigma_1 > d/2, \ \sigma_2 \leq 0
\end{align}
and the above embeddings  are 	sharp in the sense that
\begin{align} \label{embeddingmod2}
E^{s}_{d/2} \not\subset E^s_{2,1} \not\subset E^s_{\varepsilon}, \ \ \forall \  \varepsilon> 0.
\end{align}

$E^s_{2,1}$ has algebraic structures if it is localized in the first octant of frequency spaces. So, we can use a similar way as in the cases of initial data in $E^s_\sigma$ to obtain the following

\begin{thm} \label{hDNLHglobalmod}
		Let $d\geq 1$, $f(u)=u^m$,  $m\in  \mathbb{N}\setminus \{1\}$. Let $s<0$, $u_0\in E^{s}_{2,1}$ with $\mathrm{supp}~\widehat{u}_0\subset \mathbb{R}^d_{I}\setminus \{0\}$. Then there exists $s_0 \leq s$ such that SLH \eqref{model} has a unique solution
$$
u\in C([0,\infty); E^{s_0}_{2,1}) \cap \widetilde{L} ^\infty (0,\infty; E^{s_0}_{2,1})  \cap \widetilde{L}^1(0,\infty; E^{s_0}_{2,1})
$$
in the sense that $u$ satisfies the integral equation \eqref{1DNLHI}.
\end{thm}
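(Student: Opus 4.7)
The proof runs in parallel with Theorem \ref{1DNLHglobal}, with only one substantive change: the $\ell^2_k$ summation in $E^s_\sigma$ is replaced by the $\ell^1_k$ summation defining $E^s_{2,1}$. I plan to carry out the same four-step strategy used there: (a) construct the resolution space
$$
X_s = \widetilde{L}^m(0,\infty; E^{s,2/m}_{2,1}), \qquad \|u\|_{X_s}=\sum_{k\in \mathbb{Z}^d}2^{s|k|}\langle k\rangle^{2/m}\|\Box_k u\|_{L^m_tL^2_x},
$$
together with the auxiliary norm $\widetilde{L}^\infty(0,\infty;E^s_{2,1})\cap \widetilde{L}^1(0,\infty;E^{s,2}_{2,1})$ for \emph{a posteriori} regularity; (b) prove the corresponding linear and multilinear estimates on this scale; (c) close a contraction for small data with $\mathrm{supp}\,\widehat{u}_0\subset\mathbb{R}^d_I\cap\{|\xi|_\infty\geq 1\}$; (d) rescale to reach arbitrary $u_0\in E^s_{2,1}$ with $\mathrm{supp}\,\widehat{u}_0\subset\mathbb{R}^d_I\setminus\{0\}$.

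The linear estimates come from the same pointwise decay $\|\Box_k e^{t\Delta}u_0\|_2\leq e^{-t|k|^2}\|\Box_k u_0\|_2$ (valid for $k\in \mathbb{Z}^d_I\setminus\{0\}$) that underpins Lemma \ref{linearestimate}: H\"older in $t$ gives $\|\Box_k e^{t\Delta}u_0\|_{L^\gamma_t L^2_x}\lesssim |k|^{-2/\gamma}\|\Box_k u_0\|_2$, and multiplying by $2^{s|k|}\langle k\rangle^{2/\gamma}$ and summing in $\ell^1_k$ (rather than $\ell^2_k$) yields $\|e^{t\Delta}u_0\|_{\widetilde{L}^\gamma(0,\infty; E^{s,2/\gamma}_{2,1})}\lesssim\|u_0\|_{E^s_{2,1}}$ and, via Young's inequality, the matching Duhamel bound. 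The multilinear estimate is actually cleaner in the $\ell^1_k$ setting: by the orthogonality \eqref{orthog} of $\Box_k$ only $(k_1,\ldots,k_m)\in (\mathbb{Z}^d_I)^m$ with $k-\sum k_j\in \mathbb{A}$ contribute to $\Box_k(u_1\cdots u_m)$, H\"older in $t$ and the Bernstein bound \eqref{LqLpest} give $\|\Box_{k_1}u_1\cdots\Box_{k_m}u_m\|_{L^1_tL^2_x}\lesssim \prod_j\|\Box_{k_j}u_j\|_{L^m_tL^2_x}$, and since $k_j\in \mathbb{Z}^d_I$ have nonnegative coordinates one has $|k_1+\cdots+k_m|=\sum_j|k_j|$ in the $\ell^1$-norm used throughout the paper, so $2^{s|k|}\lesssim \prod_j 2^{s|k_j|}$ (with a harmless constant from the $\mathbb{A}$-shift). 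The $\ell^1_k$ sum then decouples as a product, producing
$$
\|u_1\cdots u_m\|_{\widetilde{L}^1(0,\infty; E^s_{2,1})}\lesssim \prod_{j=1}^m \|u_j\|_{\widetilde{L}^m(0,\infty; E^s_{2,1})}\leq \prod_{j=1}^m\|u_j\|_{X_s},
$$
the last inequality following from $\langle k\rangle^{2/m}\geq 1$.

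With these two estimates in hand, a Banach contraction on a small ball $\mathcal{D}\subset X_s$ of functions supported in $\mathbb{R}^d_I\cap\{|\xi|_\infty\geq 1\}$ yields the small-data fixed point; membership in $\widetilde{L}^\infty(0,\infty; E^s_{2,1})\cap \widetilde{L}^1(0,\infty; E^{s,2}_{2,1})\cap C([0,\infty); E^s_{2,1})$ follows exactly as in Step 1 of the proof of Theorem \ref{1DNLHglobal}. For large data I would invoke the SLH scaling $u_{0,\lambda}(x)=\lambda^{2/(m-1)}u_0(\lambda x)$ together with an $E^s_{2,1}$-analog of Lemma \ref{scaling} showing that $\|u_{0,\lambda}\|_{E^s_{2,1}}\leq C\lambda^\alpha 2^{s(\lambda-1)\varepsilon_0}\|u_0\|_{E^s_{2,1}}\to 0$ as $\lambda\to\infty$ (since $s<0$ and $\mathrm{supp}\,\widehat{u}_0\subset\{|\xi|_\infty\geq\varepsilon_0\}$), and then unscale using an $E^s_{2,1}$-version of Lemma \ref{scalingXs}; this forces $s_0=\lambda s\leq s$. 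The main obstacle, as in the $E^s_\sigma$ case, is the bookkeeping of this scaling: I must count how many unit cubes $Q_\ell$ fit inside the $\lambda$-dilate $\lambda Q_k$ when the weight is $\ell^1$ rather than $\ell^2$, and verify that the resulting polynomial factor in $\lambda$ is absorbed by the exponential gain $2^{s(\lambda-1)\varepsilon_0}$. Once this is verified the remainder of the proof is a direct transcription of that of Theorem \ref{1DNLHglobal}.
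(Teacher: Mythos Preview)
Your proposal is correct and follows the template of Theorem \ref{1DNLHglobal} faithfully, but the paper takes a somewhat more streamlined route that you may find instructive. Rather than building a resolution space $X_s=\widetilde{L}^m(0,\infty;E^{s,2/m}_{2,1})$ carrying a regularity weight $\langle k\rangle^{2/m}$, the paper works directly in the unweighted space $\widetilde{L}^\infty(0,\infty;E^s_{2,1})\cap\widetilde{L}^1(0,\infty;E^s_{2,1})$ (Lemma \ref{highlinearestimatemod}), and the multilinear bound is stated as $\|u^m\|_{\widetilde{L}^1(0,\infty;E^s_{2,1})}\leq C^m m^d\|u\|^{m-1}_{\widetilde{L}^\infty}\|u\|_{\widetilde{L}^1}$ (Lemma \ref{2TSnonlinestimatemod}), i.e.\ one factor in $L^1_t$ and the rest in $L^\infty_t$, with no $\sigma$-weight anywhere. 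This simplification is exactly what the $\ell^1_k$ structure buys: the algebra property you noticed ($2^{s|k|}\lesssim\prod_j 2^{s|k_j|}$ on $\mathbb{Z}^d_I$) closes the nonlinear estimate without the Cauchy--Schwarz step and the attendant $\langle k_j\rangle^{-2(\sigma+2/m)}$ bookkeeping that were forced on us in Lemma \ref{nonlinearestimate}. Your $\widetilde{L}^m$ approach still works, of course, and has the advantage of matching the $E^s_\sigma$ proof line by line; the paper's choice simply makes the resolution space coincide with the space in the statement of the theorem from the outset. For the scaling step, the paper records the needed $E^s_{2,1}$ dilation estimate as Lemma \ref{hDscalingmod}, and the cube-counting issue you flag is handled there by a Cauchy--Schwarz over the $O(\lambda^d)$ sub-cubes of $\lambda(l+[0,1]^d)$; the resulting polynomial loss in $\lambda$ is indeed absorbed by $2^{s(\lambda-1)}$.
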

For the Cauchy problem of the semi-linear heat equation with an exponential nonlinearity, we have
	
\begin{thm} \label{NLHexprmod}
Let $s<0$ and $u_0 \in E^s_{2,1}$ satisfy \eqref{supportexp} Then there exists $s_0 \leq s$ such that \eqref{NLHexp} has a unique solution
$$
u\in C(\mathbb{R}_+, E^{s_0}_{2,1}) \cap \widetilde{L} ^\infty (0,\infty; E^{s_0}_{2,1}) \cap  \widetilde{L} ^1 (0,\infty; E^{s_0}_{2,1})
$$
in the sense that $u$ satisfies the integral equation \eqref{NLHexpi}.
\end{thm}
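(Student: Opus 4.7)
The proof of Theorem \ref{NLHexprmod} follows the same blueprint as Theorem \ref{NLHexpr}, with the $E^s_\sigma$ resolution space replaced by its $\ell^1$-summed analog $E^s_{2,1}$. First, I would perform the parabolic scaling $v(t,x) = u(\lambda^2 t, \lambda x)$, reducing the original equation to \eqref{NLHexpiscaling} with the shifted semigroup $H_\lambda(t) = e^{t(\lambda^2 I + \Delta)}$. The support condition \eqref{supportexp} ensures $\widehat{u_{0}(\lambda\cdot)}$ lives in $\mathbb{R}^d_{I,\lambda}$, so Lemma \ref{TSestimate} applies. Multiplying its pointwise bounds by $2^{s|k|}$ and summing over $k$ yields the $E^s_{2,1}$-analog of Corollary \ref{linearestimate}:
$$
\|H_\lambda u_0\|_{\widetilde L^\gamma(\mathbb R_+; E^s_{2,1})} \lesssim \|u_0\|_{E^s_{2,1}}, \qquad \left\|\int_0^t H_\lambda(t-\tau) f(\tau)\,d\tau\right\|_{\widetilde L^\gamma(\mathbb R_+; E^s_{2,1})} \lesssim \|f\|_{\widetilde L^1(\mathbb R_+; E^s_{2,1})}
$$
for $\gamma \in \{1,\infty\}$; the gain factors $|k|^{-2/\gamma}$ and $|k|^{-2}$ are harmless since $|k|_\infty \geq 2\lambda \gg 1$.

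The heart of the argument is the multilinear estimate in $E^s_{2,1}$: for $\widehat{u}_j$ supported in $\mathbb{R}^d_I$, the orthogonality \eqref{orthog} together with the crucial identity $|k_1+\cdots+k_m| = |k_1|+\cdots+|k_m|$ (valid precisely because all summands lie in the first octant) gives
$$
\|u_1 \cdots u_m\|_{\widetilde L^1(E^s_{2,1})} \leq C^m \sum_{i=1}^m \|u_i\|_{\widetilde L^1(E^s_{2,1})} \prod_{j\neq i} \|u_j\|_{\widetilde L^\infty(E^s_{2,1})}.
$$
The proof mirrors Lemma \ref{nonlinearestimate} but is actually simpler: after decomposing by $\Box_{k_j}$ and applying $\|\Box_{k_j} u_j\|_\infty \leq \|\Box_{k_j} u_j\|_2$, one uses $\ell^1$-convolution directly in place of the Cauchy--Schwarz gymnastics needed for $\ell^2$, which also removes the sensitivity to the choice of $\sigma$. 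Summing the Taylor series then gives
$$
\|e^u - u - 1\|_{\widetilde L^1(E^s_{2,1})} \leq \sum_{m\geq 2} \frac{mC^m}{m!} \|u\|_{\widetilde L^\infty(E^s_{2,1})}^{m-1} \|u\|_{\widetilde L^1(E^s_{2,1})},
$$
which converges for any bounded $\|u\|_{\widetilde L^\infty(E^s_{2,1})}$.

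Finally, an $E^s_{2,1}$-version of Lemma \ref{scaling} shows $\|u_0(\lambda\cdot)\|_{E^s_{2,1}} \lesssim \lambda^{d/2} 2^{s(\lambda-1)} \|u_0\|_{E^s_{2,1}} \to 0$ as $\lambda \to \infty$, because ${\rm supp}\, \widehat{u}_0 \subset \{|\xi|_\infty \geq 2\}$ allows the exponential gain from the weight $2^{s|\xi|}$ to dominate the polynomial loss. Thus for $\lambda$ large enough we may set up a standard contraction in a ball of radius $\delta \sim \|u_0(\lambda\cdot)\|_{E^s_{2,1}}$ inside $X = \widetilde L^\infty(\mathbb R_+; E^s_{2,1}) \cap \widetilde L^1(\mathbb R_+; E^s_{2,1})$ restricted to distributions with Fourier support in $\mathbb{R}^d_{I,\lambda}$; the smallness absorbs both the factor $\lambda^2$ in front of the nonlinearity and the Taylor series constants. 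Descaling via $u(t,x) = u_\lambda(t/\lambda^2, x/\lambda)$ and an $E^s_{2,1}$-analog of Lemma \ref{scalingXs} produces the claimed solution with $s_0 = \lambda s$; time continuity in $E^{s_0}_{2,1}$ follows as in Step 1 of the proof of Theorem \ref{1DNLHglobal}. The main technical obstacle is the scaling lemma in $E^s_{2,1}$: one must carefully count the overlap between the dilated cubes $Q_k/\lambda$ and the original cubes $Q_{k'}$ and convert the gain $2^{s(\lambda-1)\varepsilon_0}$ into the $\ell^1$-summed norm, which is what forces the restriction $|\xi|_\infty \geq 2$ on the initial support rather than the weaker $|\xi|_\infty \geq 1$.
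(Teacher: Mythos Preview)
Your proposal is correct and follows essentially the same route as the paper, which simply states that one ``imitates the procedures as in Sections \ref{sectNLH1D} and \ref{sectNLHexp}'' using the $E^s_{2,1}$ analogs (Lemmas \ref{hDscalingmod}--\ref{2TSnonlinestimatemod}); your outline of the scaled contraction in $\widetilde L^\infty \cap \widetilde L^1(E^s_{2,1})$, the $\ell^1$-convolution multilinear estimate exploiting $|k_1+\cdots+k_m|=|k_1|+\cdots+|k_m|$ on $\mathbb{Z}^d_I$, and the descaling to $s_0=\lambda s$ matches this exactly.

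One small correction: your closing sentence misidentifies the source of the restriction $|\xi|_\infty\geq 2$ in \eqref{supportexp}. It is not forced by the scaling lemma (Lemma \ref{hDscalingmod} works already for $\mathrm{supp}\,\widehat f\subset\mathbb R^d_{I,+}$, i.e.\ $|\xi|_\infty\geq 1$), but by the semigroup estimate for $H_\lambda(t)=e^{t(\lambda^2 I+\Delta)}$: Lemma \ref{Lpmultiplier} needs $|k|_\infty\geq 2\lambda$ so that $|k|^2-\lambda^2\gtrsim |k|^2$ and the symbol $e^{t(\lambda^2-|\xi|^2)}$ actually decays. After dilation of the initial data by $\lambda$, the support condition $|\xi|_\infty\geq 2$ becomes $|\xi|_\infty\geq 2\lambda$, which is precisely this hypothesis. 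Also, your scaling bound should read $\|u_0(\lambda\cdot)\|_{E^s_{2,1}}\lesssim 2^{s(\lambda-1)}\|u_0\|_{E^s_{2,1}}$ (no polynomial loss $\lambda^{d/2}$; cf.\ Lemma \ref{hDscalingmod}), though this does not affect the argument since the exponential factor dominates regardless.
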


By \eqref{embeddingmod} and \eqref{embeddingmod2}, we see that Theorems \ref{hDNLHglobalmod} and \ref{NLHexprmod} contain a class of  initial data not covered by the spaces $E^s_\sigma$ as in Theorems \ref{1DNLHglobal}, \ref{hDNLHglobal} and \ref{NLHexpr}. Now we sketch their proofs.

	\begin{lemma} \label{hDscalingmod}
		Assume $s < 0$, $f \in E^s_{2,1}$ and $\mathrm{supp}~\widehat{f}\subset   \mathbb{R}^d_{I,+} $, $\lambda \gg 1$.  Then
		\begin{align*}
			\|f_\lambda\|_{E^s_{2,1}}\leq C  2^{s(\lambda-1)}\|f\|_{E^s_{2,1}}.
		\end{align*}
	\end{lemma}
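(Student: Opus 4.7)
The plan is to expand $\|f_\lambda\|_{E^s_{2,1}}$ directly from the definition and then regroup the sum so that it matches $\|f\|_{E^s_{2,1}}$. Since $\widehat{f_\lambda}(\xi)=\lambda^{-d}\widehat{f}(\xi/\lambda)$, a change of variable gives
$$
\|\widehat{f_\lambda}\|_{L^2(Q_k)}=\lambda^{-d/2}\|\widehat{f}\|_{L^2(Q_k/\lambda)},
$$
where $Q_k/\lambda:=[k/\lambda,(k+1)/\lambda)^d$ is a cube of side $1/\lambda$. Thus
$$
\|f_\lambda\|_{E^s_{2,1}}=\lambda^{-d/2}\sum_{k\in\mathbb Z^d}2^{s|k|}\|\widehat{f}\|_{L^2(Q_k/\lambda)},
$$
and the task reduces to regrouping the small cubes $Q_k/\lambda$ into unit cubes $Q_l$.

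Assume first that $\lambda$ is an integer. Write $k=\lambda l+r$ uniquely with $l\in\mathbb Z^d$ and $r\in\{0,\dots,\lambda-1\}^d$; then the subcubes $Q_{\lambda l+r}/\lambda$ form a disjoint tiling of $Q_l$ as $r$ varies. Because $\mathrm{supp}\,\widehat f\subset \mathbb R^d_{I,+}$, only $l\in\mathbb Z^d_I$ with $|l|_\infty\geq 1$ give a nonzero contribution, and for such $l$ together with $r\geq 0$ we have $|\lambda l+r|=\lambda|l|+|r|$, hence (using $s<0$) $2^{s|\lambda l+r|}\leq 2^{s\lambda|l|}$. Applying Cauchy--Schwarz to the $\lambda^d$ terms in the $r$-sum and using the disjoint tiling,
$$
\sum_{r}\|\widehat f\|_{L^2(Q_{\lambda l+r}/\lambda)}\leq \lambda^{d/2}\|\widehat f\|_{L^2(Q_l)}.
$$
Combining these bounds collapses the $\lambda^{-d/2}$ prefactor and yields $\|f_\lambda\|_{E^s_{2,1}}\leq \sum_{l\in\mathbb Z^d_{I,+}}2^{s\lambda|l|}\|\widehat f\|_{L^2(Q_l)}$.

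The final step exploits the support restriction once more: for any $l\in\mathbb Z^d_{I,+}$ we have $|l|\geq|l|_\infty\geq 1$, so $\lambda|l|\geq|l|+(\lambda-1)$, and (using $s<0$) $2^{s\lambda|l|}\leq 2^{s(\lambda-1)}\,2^{s|l|}$. This at once gives $\|f_\lambda\|_{E^s_{2,1}}\leq 2^{s(\lambda-1)}\|f\|_{E^s_{2,1}}$, which is the desired bound (with $C=1$).

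The main obstacle is the case of non-integer $\lambda>1$: the decomposition $k=\lambda l+r$ is no longer exact, so some $Q_k/\lambda$ may straddle boundaries between neighbouring unit cubes $Q_l$. The remedy is a bounded-overlap version of the same argument: each small cube $Q_k/\lambda$ meets at most $2^d$ cubes $Q_l$, and each $Q_l$ is covered by at most $(\lfloor\lambda\rfloor+1)^d$ cubes $Q_k/\lambda$. Replacing the exact tiling by these overlap bounds in the Cauchy--Schwarz step costs only a multiplicative constant $C=C(d)$, while leaving the crucial factor $2^{s(\lambda-1)}$ intact, yielding the stated estimate.
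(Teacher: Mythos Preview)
Your proof is correct and follows essentially the same route as the paper's: rewrite $\|f_\lambda\|_{E^s_{2,1}}$ via the change of variable, regroup the small cubes $Q_k/\lambda$ into unit cubes $Q_l$ using Cauchy--Schwarz, and then extract the factor $2^{s(\lambda-1)}$ from $2^{s\lambda|l|}$ via $|l|\geq 1$. The only presentational differences are that the paper first reduces (by symmetry) to $\mathrm{supp}\,\widehat f\subset\{\xi_1\geq 1\}$ and handles general $\lambda$ directly with the overlap argument, whereas you treat the integer case cleanly by the exact tiling $k=\lambda l+r$ and then indicate the bounded-overlap modification for non-integer $\lambda$; both lead to the same estimate.
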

\begin{proof}
It suffices to consider the case $\mathrm{supp}~\widehat{f}\subset \{\xi:    \xi_1  \geq 1 \}\cap \mathbb{R}^d_{I,+}$. 	By the definition of $E^s_{2,1}$ and the support property of $\widehat{f}$, we have from Cauchy-Schwarz inequality that
\begin{align*}
\|f_\lambda\|_{E^s_{2,1}} &= \sum_{k}2^{s|k|} \lambda^{-d}\|\widehat{f}(\lambda^{-1}\xi)\|_{L^2_{\xi\in k+[0,1]^d}}\\
&= \sum_k\lambda^{-d/2}2^{s|k |}\|\widehat{f}(\xi)\|_{L^2_{\xi\in \lambda^{-1}(k+[0,1]^d)}}\\
&\leq \sum_{l:l_1\geq 1} \lambda^{-d/2} \sum_{k:\lambda^{-1}(k+[0,1]^d) \cap l+[0,1]^d\not=\emptyset}2^{s|k|}\|\widehat{f}(\xi)\|_{L^2_{\xi\in \lambda^{-1}(k+[0,1]^d)}}\\
& \lesssim \sum_{l:l_1\geq 1}   2^{s \lambda |l| } \| \widehat{f}(\xi) \|_{L^2_{\xi\in l+[0,1]^d}} \\
&\lesssim 2^{s (\lambda-1)} \|f\|_{E^s_{2,1}}.
\end{align*}
We immediately have the result, as desired.
	\end{proof}

\begin{lemma}\label{highlinearestimatemod}
		Assume $\mathrm{supp}\,\mathscr{F}_x f(t,\cdot), \mathrm{supp}~\widehat{u}_0\subset  \mathbb{R}^{d}_{I,+}$ for all $t> 0$. Then,
		\begin{align*}
			\|e^{t\Delta}u_0\|_{\widetilde{L} ^1 (0,\infty; E^{s}_{2,1}) \cap \widetilde{L} ^\infty (0,\infty; E^{s}_{2,1})  }&\lesssim \|u_0\|_{E^s_{2,1}},\\
			\left\|\int_0^t e^{(t-\tau)\Delta}f(\tau)~d \tau\right\|_{\widetilde{L} ^1 (0,\infty; E^{s}_{2,1}) \cap \widetilde{L} ^\infty (0,\infty; E^{s}_{2,1})   }&\lesssim \|f\|_{ \widetilde{L} ^1 (0,\infty; E^{s}_{2,1})  }.
		\end{align*}
	\end{lemma}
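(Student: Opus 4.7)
The proof should follow the exact template of Lemma \ref{linearestimate} and Corollary \ref{linearestimateapp}, simply replacing the $\ell^2_k$-summation by $\ell^1_k$, which is the natural convention making $\|u\|_{\widetilde{L}^\gamma(0,\infty;E^s_{2,1})} = \sum_{k} 2^{s|k|}\|\Box_k u\|_{L^\gamma_t L^2_x}$. The starting point is the observation that $\mathrm{supp}\,\widehat{u}_0,\mathrm{supp}\,\mathscr{F}_x f(t,\cdot)\subset\mathbb{R}^d_{I,+}$ forces $\Box_k u_0 = \Box_k f(t,\cdot) = 0$ unless $k\in\mathbb{Z}^d_I$ with $|k|_\infty\geq 1$; in particular $|k|\geq 1$ on the relevant indices. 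Combining this with Plancherel and the fact that $|\xi|^2\geq c|k|^2$ on $(k+[0,1)^d)\cap\mathbb{R}^d_I$ yields the pointwise frequency decay
\begin{align*}
\|\Box_k e^{t\Delta}u_0\|_{L^2_x}\leq e^{-ct|k|^2}\|\Box_k u_0\|_{L^2_x},
\end{align*}
which is exactly \eqref{L2estimate} in the proof of Lemma \ref{linearestimate}.

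For the homogeneous estimate, taking $L^\infty_t$ directly gives $\|\Box_k e^{t\Delta}u_0\|_{L^\infty_t L^2_x}\leq\|\Box_k u_0\|_2$, while integrating in $t$ gives $\|\Box_k e^{t\Delta}u_0\|_{L^1_t L^2_x}\lesssim|k|^{-2}\|\Box_k u_0\|_2\leq \|\Box_k u_0\|_2$, the last inequality using $|k|\geq 1$. Multiplying by $2^{s|k|}$ and summing over $k\in\mathbb{Z}^d_I\setminus\{0\}$ reproduces $\|u_0\|_{E^s_{2,1}}$. For the Duhamel term, one starts from the pointwise bound
\begin{align*}
\left\|\Box_k\int_0^t e^{(t-\tau)\Delta}f(\tau)\,d\tau\right\|_{L^2_x}\leq\int_0^t e^{-c(t-\tau)|k|^2}\|\Box_k f(\tau)\|_{L^2_x}\,d\tau
\end{align*}
and applies Young's convolution inequality in $t$: with target $L^\infty_t$ one absorbs the kernel's $L^\infty_t$-norm (bounded by $1$), and with target $L^1_t$ one absorbs $\|e^{-ct|k|^2}\|_{L^1_t}\sim|k|^{-2}\leq 1$. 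Multiplying by $2^{s|k|}$ and summing over $k\in\mathbb{Z}^d_I\setminus\{0\}$ then gives the desired bound by $\|f\|_{\widetilde{L}^1(0,\infty;E^s_{2,1})}$.

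The only step requiring genuine attention is verifying that the sum is restricted to $k\in\mathbb{Z}^d_I\setminus\{0\}$, so that $|k|\geq 1$ renders the factor $|k|^{-2}$ harmless; without this separation from the origin the $|k|^{-2}$ weight would not be dominated by a constant and the $\widetilde{L}^1$-in-time portion of the estimate would fail. This is exactly analogous to the role played by the separation ${\rm supp}\,\widehat{u}_0\subset\mathbb{R}^d_{I,+}$ (as opposed to merely $\mathbb{R}^d_I$) in Lemma \ref{linearestimate}. No multilinear or nonlinear input is required: the statement is the $\ell^1$-weight analogue of Corollary \ref{linearestimateapp}, obtained by repeating its proof with the frequency summation performed in $\ell^1$ rather than $\ell^2$.
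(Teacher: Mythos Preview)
Your proposal is correct and follows exactly the approach the paper itself indicates: the paper omits the proof, stating only that Lemma \ref{highlinearestimatemod} ``can be obtained by following similar ways as in Section \ref{sectNLH1D}'', i.e., by rerunning the argument of Lemma \ref{linearestimate} and Corollary \ref{linearestimateapp} with the $\ell^1_k$-sum in place of the $\ell^2_k$-sum. Your write-up of the frequency-localized decay, the $L^\infty_t$/$L^1_t$ time integration via Young's inequality, and the crucial use of $|k|\geq 1$ from the support condition $\mathbb{R}^d_{I,+}$ is precisely that argument.
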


\begin{lemma} \label{2TSnonlinestimatemod}
Let $s<0$.
Assume that $ {\rm supp}\, \widehat{u}  \subset \mathbb{R}^d_I$. Then we have
\begin{align}
 \|u^m \|_{ \widetilde{L} ^1 (0,\infty; E^{s}_{2,1})  }
   \leq   C^m m^d  \|u\|^{m-1}_{\widetilde{L} ^\infty (0,\infty; E^{s}_{2,1})} \|u\|_{\widetilde{L} ^1 (0,\infty; E^{s}_{2,1})}
    \label{2TSnonlinestimate2mod}
    \end{align}
\end{lemma}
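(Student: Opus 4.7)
The plan is to mimic the proof of Lemma \ref{nonlinearestimate} but with the $\ell^1$-type summation of $E^s_{2,1}$ in place of the $\ell^2$-type summation of $E^{s,\sigma}_{2,2}$. The bookkeeping becomes simpler because the $\ell^1$ structure allows a clean tensorization of the Cauchy--Schwarz-free product.

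First, I would expand
\begin{align*}
\|u^m\|_{\widetilde{L}^1(0,\infty;E^s_{2,1})} = \sum_{k\in\mathbb{Z}^d} 2^{s|k|}\|\Box_k u^m\|_{L^1_tL^2_x}
\end{align*}
and insert $u=\sum_j \Box_j u$ into each factor. By the orthogonality relation \eqref{orthog}, only those $(k_1,\dots,k_m)$ with $|k-k_1-\cdots-k_m|_\infty\le m+1$ contribute, and by the support condition $\mathrm{supp}\,\widehat u\subset\mathbb{R}^d_I$, we may restrict $k_1,\dots,k_m\in\mathbb{Z}^d_I$. I would swap the order of summation, fixing $(k_1,\dots,k_m)$ first; then $k$ ranges over at most $(2m+3)^d\lesssim C^m m^d$ values, which is the source of the advertised $m^d$.

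Next, since $k_j\in\mathbb{Z}^d_I$, all coordinates are nonnegative, so $|k_1+\cdots+k_m|=|k_1|+\cdots+|k_m|$; combined with $|k-k_1-\cdots-k_m|\le (m+1)d$ and $s<0$, this yields
\begin{align*}
2^{s|k|}\le 2^{|s|(m+1)d}\,\prod_{j=1}^m 2^{s|k_j|},
\end{align*}
with the first factor absorbed into $C^m$. For the space-time norm, I would apply H\"older placing one factor in $L^1_tL^2_x$ and the remaining $m-1$ factors in $L^\infty_tL^\infty_x$; by the Bernstein-type inequality \eqref{LqLpest}, $\|\Box_{k_j}u\|_{L^\infty_x}\lesssim\|\Box_{k_j}u\|_{L^2_x}$, so
\begin{align*}
\|\Box_{k_1}u\cdots\Box_{k_m}u\|_{L^1_tL^2_x}\lesssim \|\Box_{k_m}u\|_{L^1_tL^2_x}\prod_{j=1}^{m-1}\|\Box_{k_j}u\|_{L^\infty_tL^2_x}.
\end{align*}
Substituting these two estimates and using $\|\Box_k(\cdot)\|_{L^2}\le\|\cdot\|_{L^2}$ decouples the multiple sum into a product
\begin{align*}
\Bigl(\sum_{k_m}2^{s|k_m|}\|\Box_{k_m}u\|_{L^1_tL^2_x}\Bigr)\prod_{j=1}^{m-1}\Bigl(\sum_{k_j}2^{s|k_j|}\|\Box_{k_j}u\|_{L^\infty_tL^2_x}\Bigr),
\end{align*}
which is exactly $\|u\|_{\widetilde L^1(0,\infty;E^s_{2,1})}\|u\|_{\widetilde L^\infty(0,\infty;E^s_{2,1})}^{m-1}$.

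There is no real obstacle here: because the $E^s_{2,1}$ norm is already summed in $\ell^1$, I do not need to invoke Cauchy--Schwarz and the delicate summability conditions $\sigma\ge d/2-2/(m-1)$ of Lemma \ref{nonlinearestimate} are unnecessary. The only point requiring attention is making sure the combinatorial constant is indeed $C^m m^d$ (not $C^m m^{md}$), and this boils down to the single enumeration $\#\{k:|k-\sum k_j|_\infty\le m+1\}\lesssim m^d$; the other per-sum constants generated along the way are at most $C^m$ and can be absorbed.
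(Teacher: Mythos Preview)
Your proposal is correct and follows essentially the approach the paper indicates (it omits the proof, pointing to Section~\ref{sectNLH1D}); in fact your argument is the direct analogue of Lemma~\ref{errornonlinearestimate}, with the $\ell^1$ structure of $E^s_{2,1}$ allowing the multiple sum to tensorize without Cauchy--Schwarz, exactly as you note. The constant bookkeeping is fine: $(2m+3)^d\le (7/2)^d m^d$ gives the $m^d$, and the factors $2^{|s|(m+1)d}$ together with the $m-1$ Bernstein constants are absorbed into $C^m$.
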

Lemmas \ref{highlinearestimatemod}  and  \ref{2TSnonlinestimatemod} can be obtained by following similar ways as in Section \ref{sectNLH1D} and we omit the details of their proofs. Then we can imitate the procedures as in Sections \ref{sectNLH1D} and \ref{sectNLHexp} to get the proofs of Theorems \ref{hDNLHglobalmod} and \ref{NLHexprmod}, respectively.
	
\section{An example of solutions for SLH } \label{sectNLHnusim}

Let $u $ be the solution of
\begin{equation}\label{simmodeldelta}
		 \partial_t u - \Delta u -   u^{2} =0, \  \ u (0,x) = \delta u_0(x).
\end{equation}
Let $d=1$, $v=\widehat{u}$, and $\widehat{u}_0 (\xi) = v_0 (\xi)$. Then $v$ satisfies
$$
v (\xi,t)=  \delta e^{-t|\xi|^2} v_0  +   \int_0^t e^{-(t-\tau)|\xi|^2}v  (\tau) *  v (\tau) d\tau
$$
Let  $\mathbb{R}^d_{I,+}$ be as in \eqref{rdplus}.  Suppose that ${\rm supp}\, v_0 \subset \mathbb{R}^d_{I,+}$.
 Using the support set of $v_0$ and similar to  \eqref{supportsetv}, one sees that for any $K\in \mathbb{N}$,
\begin{equation}\label{exampiterationmodelg}
v (t, \xi)= \sum^K_{k=1} \frac{\delta^k}{k!} \, \frac{\partial^k v}{\partial \delta^k}\Big|_{\delta=0}(t,\xi), \ \ |\xi| < K,   	
\end{equation}
where
 \begin{align}\label{exampiterationmodelb}
 \frac{\partial v}{\partial \delta }\Big|_{\delta=0} (t,\xi) & = e^{- t\xi^2} v_0(\xi),  \nonumber \\
  \frac{\partial^k v}{\partial \delta^k} \Big|_{\delta=0} (t,\xi) & =   \int_0^t e^{-(t-\tau)\xi^2 }\sum^{k-1}_{i=1} \binom{k}{i} \left( \frac{\partial^{k-i} v}{\partial \delta^{k-i}} \Big|_{\delta=0} \right)* \left(  \frac{\partial^{i} u}{\partial \delta^{i}}\Big|_{\delta=0} \right) (\tau,\xi) d\tau	.
\end{align}
\begin{example} \rm
Let $  H(\xi-1)$ be the translation of Heaviside function,
$$
H(\xi)=
\left\{
\begin{array} {ll}
1, &  \xi \geq 0,\\
0,  & \xi <0.
\end{array}
\right.
$$
 Let $ v_0 (\xi) = e^\xi H(\xi-1)$.
We have
\begin{align*}
\frac{\partial v}{\partial \delta }\Big|_{\delta=0} (t_1,\xi_1) & = e^{-t_1 \xi^2_1 +\xi_1} H(\xi_1-1), \\
\frac{\partial^2 v}{\partial \delta^2 }\Big|_{\delta=0} (t_2,\xi_2)
& =  2 e^{-t_2 \xi^2_2 +\xi_2 } H(\xi_2-2) \int^{t_2}_0 \int^{\xi_2 -1}_1  e^{2t_2\xi_1 (\xi_2-\xi_1)}  d\xi_1 dt_1 ,\\
\frac{\partial^3 v}{\partial \delta^3 }\Big|_{\delta=0} (t_3,\xi_3)
& = 2\cdot 3! \ e^{-t_3 \xi^2_3 + \xi_3} H(\xi_3-3) \int^{t_3}_0 \int^{\xi_3 -1}_2  e^{2t_2\xi_2 (\xi_3-\xi_2)}  \\ & \ \ \ \ \ \ \ \ \ \ \times \int^{t_2}_0 \int^{\xi_2 -1}_1  e^{2t_1\xi_1 (\xi_2-\xi_1)}  d\xi_1 dt_1  d\xi_2 dt_2.
\end{align*}
We see that $(\frac{\partial v}{\partial \delta } + \frac{1}{2}\frac{\partial^2 v}{\partial \delta^2 } + \frac{1}{6}\frac{\partial^3 v}{\partial \delta^3 }) \Big|_{\delta=0}(\xi,t)$ is the exact solution of
$$
v (\xi,t)=   e^{-t|\xi|^2} v_0  +   \int_0^t e^{-(t-\tau)|\xi|^2}v  (\tau) *  v (\tau) d\tau
$$
in the region $ \{\xi: \xi \leq 3 \}. $

\end{example}

\textbf{Claims.} On behalf of all authors, the corresponding author states that there is no conflict of interest.

\textbf{Acknowledgments.}  The second named author is very grateful to Professor Y. Giga for his enlightening discussions and for his valuable suggestions on the paper. Also, he would
 like to thank the reviewers for their valuable suggestions, which enable us to give an essential improvement to Theorem \ref{1DNLHglobal}.  The authors are supported in part by the NSFC, grants 11771024, 12171007.

	\bibliographystyle{amsplain}
	\addcontentsline{toc}{chapter}{References}

\footnotesize

	\scriptsize\textsc{Jie Chen and Baoxiang Wang: School of Sciences, Jimei University, Xiamen, 361021 and  School of Mathematical Sciences, Peking University,   Beijing 100871, P.~R.~of China}.
	
	\textit{E-mail address}: \verb"jiechern@pku.edu.cn" \ \ and  \ \  \verb"wbx@math.pku.edu.cn"
	\vspace{10pt}

\scriptsize\textsc{Zimeng Wang:  School of Mechanical and Aerospace Engineering, Queen University, Belfast,
Northern Ireland,
BT9 5AH}.
	
	\textit{E-mail address}: \verb"zwang70@qub.ac.uk"

\end{document}